\tikzstyle{whitedot}=[fill=white, draw=black, shape=circle, inner sep=2pt]
\tikzstyle{blackdot}=[fill=black, draw=black, shape=circle, inner sep=2pt]
\tikzstyle{smallblackdot}=[fill=black, draw=black, shape=circle, inner sep=0pt]
\tikzstyle{red}=[-, draw=red]
\tikzstyle{green}=[-, draw={rgb,255: red,0; green,217; blue,0}]
\tikzstyle{blue}=[-, draw=blue]
\tikzstyle{thick}=[-, line width=0.6mm]
\tikzstyle{arrow}=[->]
\DeclareMathOperator{\Gal}{Gal}
\DeclareMathOperator{\rank}{rank}
\DeclareMathOperator{\Triv}{Triv}
\DeclareMathOperator{\NS}{NS}
\DeclareMathOperator{\Deltaconic}{\Delta_{\text{conic}}}
\DeclareMathOperator{\Deltaell}{\Delta_{\text{ell}}}
\DeclareMathOperator{\Df}{Df}
\DeclareMathOperator{\Res}{Res}
\title{Conic bundles and Mordell--Weil ranks of elliptic surfaces}
\author{Felipe Zingali Meira}
\address{Universidade Federal do Rio de Janeiro, Rijksuniversiteit Groningen}
\email{f.zingali.meira@rug.nl}
\date{10/2024}
\newtheorem{thm}{Theorem}[section]
\newtheorem{lemma}[thm]{Lemma}
\newtheorem{corollary}[thm]{Corollary}
\newtheorem*{acknowledge}{Acknowledgments}
\newtheorem{prop}[thm]{Proposition}
\newtheorem{conjecture}[thm]{Conjecture}
\theoremstyle{definition}
\newtheorem{dfn}[thm]{Definition}
\newtheorem{remark}[thm]{Remark}
\newtheorem{notation}[thm]{Notation}
\newtheorem{assumption}[thm]{Assumption}
\newtheorem{example}[thm]{Example}
\newcommand{\cB}{\mathcal B}
\newcommand{\cE}{\mathcal E}         
\newcommand{\cF}{\mathcal F}         
\newcommand{\cG}{\mathcal G}
\newcommand{\ZZ}{\mathbb{Z}}
\newcommand{\QQ}{\mathbb{Q}}
\newcommand{\FF}{\mathbb{F}}
\newcommand{\PP}{\mathbb{P}}
\newcommand{\Galk}{\Gal(\overline{k}/k)}
\begin{document}

\maketitle

\begin{abstract}
    Let $k$ be a number field and $\cE$ an elliptic curve defined over the function field $k(T)$ given by an equation of the form $y^2 = a_3x^3 + a_2x^2 + a_1x + a_0$, where $a_i \in k[T]$ and $\deg a_i \leq 2$. We explore the conic bundle structure over the $x$-line to obtain lower and upper bounds for the Mordell--Weil rank of $\cE(k(T))$.
\end{abstract}

\section{Introduction}

Let $k$ be a number field, and $\cE$ a curve over the function field $k(T)$ given by an equation of the form

\begin{equation}\label{equation1} 
    y^2 = a_3(T)x^3 + a_2(T)x^2 + a_1(T)x + a_0(t),
\end{equation}

where $a_i(T)$ are polynomials in $k[T]$ of degree at most 2. We further assume that
\begin{equation*}
    \Deltaell(T) = a_3^2(-27a_0^2 a_3^2 + 18a_0a_1a_2a_3 + a_1^2a_2^2 - 4a_0a_2^3 - 4a_1^3a_3)
\end{equation*}
is not identically equal to $0$, and $a_i(T)$ are not all multiple of the same square ${(T-c)^2}$. With these conditions, $\cE$ defines an elliptic curve over $k(T)$. Curves in this form have been studied in \cite{ALRM2007}, \cite{kollar-mella}, \cite{battistoni2021ranks}. In this paper, we are interested in the rank $r_k$ of $\cE(k(T))$. By the condition on the degrees of the coefficients, we can rewrite Equation \ref{equation1} as

\begin{equation}\label{equation2}
    y^2 = A(x)T^2 + B(x)T + C(x).
\end{equation}

\sloppy Our goal is to understand how the conic bundle structure described in Equation \ref{equation2} influences the rank of $\cE$ over $k(T)$. Firstly, we notice that each root $\theta$ of $\Deltaconic(x) \colonequals {B(x)^2 - 4A(x)C(x)}$ induces a $\overline{k}(T)$-point $P_{\theta} \in \cE(\overline{k}(T))$. Namely,

\begin{equation}\label{points from CB}
    P_{\theta} = \begin{cases}
        \Bigl(\theta, \sqrt{A(\theta)} \bigl(T - \frac{B(\theta)}{2A(\theta)}\bigr)\Bigr) & \text{when $A(\theta) \neq 0$,}\\
        \hfil\Bigl(\theta, \sqrt{C(\theta)}\Bigr) & \text{when $A(\theta) = 0$.}
    \end{cases}
\end{equation}

In some cases, it is known that the number of points induced by roots of $\Deltaconic(x)$ which are defined over $k$ determines $r_k$ exactly (see \cite[Theorem 2.1]{ALRM2007}). This is not true in general -- indeed, there are cases where $r_k$ is strictly smaller than the number of roots of $\Deltaconic$ (see \cite[Theorem $A_2$]{shioda91}).

\subsection{Our contribution}

We approach the problem through the underlying geometry determined by Equations \ref{equation1} and \ref{equation2}. From Equation \ref{equation1}, we obtain the Kodaira--Néron model of $\cE$, that is, a rational elliptic surface $\pi \colon X \to \PP^1$ with $\cE$ as its generic fiber. On the other hand, Equation \ref{equation2} displays a conic bundle structure $\varphi \colon X \to \PP^1$. We show that the $\overline{k}(T)$-points of $\cE$ induced by roots of $\Deltaconic$ correspond to $(-1)$-components of reducible fibers of $\varphi$. In particular, every reducible conic fiber is of type $D_n$, for $n \geq 3$, or of type $A_n$, for $n \geq 2$ (see \cite[Theorem 4.2]{renato}); a conic fiber of type $D_n$ has a single $(-1)$-component corresponding to a point of $2$-torsion in $\cE(\overline{k}(T))$, and a conic fiber of type $A_n$ has a pair of $(-1)$-components such that the corresponding points are additive inverses in $\cE(\overline{k}(T))$.

Firstly, we work over the algebraic closure $\overline{k}$. In this context, we show in Propositions \ref{determining_conic_fibers} and \ref{fiber_at_infinity} that we can determine the type of each reducible fiber of $\varphi \colon X \to \PP^1$ from Equation \ref{equation2}. This allows us to calculate the number $\delta$ of fibers of type $A_n$. We can also find the rank $r$ of $\cE(\overline{k}(T))$ by combining Tate's Algorithm and the Shioda--Tate formula. We call the difference $\delta - r$ the \textit{defect} of $\cE$ and denote it by $\Df(\cE)$.

Over $k$, we define the number $\delta_k$ according to the action of $\Galk$ on the components of the fibers of type $A_n$ of $\varphi$. With these two constants in hand, we give the following bounds for the Mordell--Weil rank over $k$.

{
\renewcommand{\thethm}{\ref{theorem_rank_k}}
\begin{thm}
  Let $r_k$ be the rank of $\cE(k(T))$. Then, $\delta_k \geq r_k \geq \delta_k - \Df(\cE)$.
\end{thm}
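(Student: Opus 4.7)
The plan is to express $r_k$ as the dimension of a Galois-fixed subspace and read off the bounds via a semisimplicity argument. Over $\overline{k}$, I would let $\Lambda \subseteq \cE(\overline{k}(T))$ be the subgroup generated by the $\delta$ points $P_\theta$ coming from $(-1)$-components of $A_n$-fibers of $\varphi$, one chosen from each pair. Combining Shioda--Tate for $\pi$ with the classification of reducible $\varphi$-fibers (only $D_n$ contributing to torsion, and only $A_n$ contributing to the free part via $\pm P_\theta$), I would argue that the induced map
\[
\phi \colon \ZZ^{\delta} \twoheadrightarrow \cE(\overline{k}(T)) / \mathrm{tors}, \qquad e_\theta \longmapsto P_\theta,
\]
is surjective, with kernel $K$ of rank exactly $\delta - r = \Df(\cE)$.

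Next, I would identify the natural action of $G := \Galk$ on $\QQ^{\delta}$ as a signed permutation representation: for $\sigma \in G$ one has $\sigma(e_\theta) = \pm\, e_{\sigma(\theta)}$, where the sign records whether the pullback of the chosen $(-1)$-component of $F_\theta$ equals the chosen component of $F_{\sigma(\theta)}$ or its partner (this sign is forced on us because the defining formula for $P_\theta$ involves $\sqrt{A(\theta)}$ or $\sqrt{C(\theta)}$, which is only determined up to sign). Under this identification, $\delta_k = \dim_\QQ (\QQ^{\delta})^G$ by the definition of $\delta_k$ used in the paper, and $\phi \otimes \QQ$ is $G$-equivariant.

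Because the action of $G$ factors through a finite quotient and we work over $\QQ$, the functor of $G$-invariants is exact; applying it to $0 \to K_\QQ \to \QQ^{\delta} \to \cE(\overline{k}(T)) \otimes \QQ \to 0$ yields
\[
0 \longrightarrow K_\QQ^{G} \longrightarrow (\QQ^{\delta})^{G} \longrightarrow \bigl(\cE(\overline{k}(T)) \otimes \QQ\bigr)^{G} \longrightarrow 0.
\]
The right-hand term equals $\cE(k(T)) \otimes \QQ$, of $\QQ$-dimension $r_k$, so $r_k = \delta_k - \dim_\QQ K_\QQ^{G}$, and both inequalities in the theorem follow at once from the trivial estimate $0 \leq \dim_\QQ K_\QQ^{G} \leq \dim_\QQ K_\QQ = \Df(\cE)$.

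The main obstacle is the first step: verifying that $\phi$ is surjective on the free part of the Mordell--Weil group, i.e.\ that the $P_\theta$ span $\cE(\overline{k}(T)) \otimes \QQ$. This is precisely what justifies the terminology ``defect'' for $\delta - r$, and it should fall out of the description of sections of $\pi$ as $(-1)$-curves on $X$ meeting the general fiber of $\varphi$ once, combined with Shioda--Tate applied to $\pi$. After that geometric input is in place, the remainder is a short exercise in Galois descent for finite-dimensional $\QQ$-representations of a finite group; the rest of the theorem is essentially forced.
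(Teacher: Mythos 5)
Your argument is correct, and it rests on exactly the same two pillars as the paper's proof: the fact that the points $P_\theta$ generate a finite-index subgroup of $\cE(\overline{k}(T))$ with relation module of rank $\Df(\cE)$ (this is precisely Theorem \ref{theorem:kernel_equals_defect} and Corollary \ref{coro:finite index subgroup}, so the ``main obstacle'' you flag is already available as a prior result and creates no gap), and the observation that non-squareness of $A(\theta)$ (or $C(\theta)$ when $A(\theta)=0$) in $k(\theta)$ forces a sign flip $P_\theta \mapsto -P_\theta$ under the stabilizer of $\theta$. Where you differ is in the descent step: the paper argues integrally, introducing the orbit-sums $\Sigma_i$, proving $M^{\cG}=M_k$ by hand (Lemma \ref{lemma_Mk}), and bounding $\rank(\ker\psi_k)\le\Df(\cE)$ by restricting relations; you instead tensor with $\QQ$, view $\QQ^{\delta}$ as a signed permutation module, and use exactness of $G$-invariants (Maschke) on the sequence $0\to K_\QQ\to\QQ^{\delta}\to\cE(\overline{k}(T))\otimes\QQ\to 0$. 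Your route is cleaner in two respects: it sidesteps the choice-of-representative issue the paper handles in Remark \ref{remark: choice theta_i}, and it yields the sharper exact formula $r_k=\delta_k-\dim_\QQ K_\QQ^{G}$, of which the stated inequalities are corollaries. One step you state too glibly is the identification $\delta_k=\dim_\QQ(\QQ^{\delta})^{G}$: this is not literally ``by definition'', since $\delta_k$ is defined by the square conditions; you need the short orbit-wise computation (Frobenius reciprocity for the monomial module, i.e.\ an orbit contributes $1$ to the invariants iff the sign character is trivial on $\Gal(\overline{k}/k(\theta))$, which holds iff $A(\theta)$, resp.\ $C(\theta)$, is a nonzero square in $k(\theta)$). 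That computation is exactly the content of the paper's Lemma \ref{lemma_Mk}, so it is routine, but it should be written out; you should also note that $(\cE(\overline{k}(T))\otimes\QQ)^{G}=\cE(k(T))\otimes\QQ$ uses that the action factors through a finite quotient together with $\cE(\overline{k}(T))^{\Galk}=\cE(k(T))$.
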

\addtocounter{thm}{-1}
}

We explore in which cases we can determine $r_k$ exactly. In Theorem \ref{theorem: Determine Df}, we show that for any given curve $\cE$, the type of the fiber $G_{\infty}$ of $\varphi$ at infinity and the type of the fibers of $\pi$ which have components in common with $G_{\infty}$ are enough to determine the defect $\Df(\cE)$. We apply this to find families of curves for which every member $\cE$ has $\Df(\cE) = 0$, and consequently $r_k = \delta_k$. For some families with $\Df(\cE) = 1$, we are able to use the Galois action on the points $P_{\theta}$ to determine whether $r_k = \delta_k$ or $r_k = \delta_k - 1$.

With these strategies, we are able to determine the rank $r_k$ for many subfamilies in the larger family of curves given by Equation \ref{equation1}. In Theorem \ref{theorem: generic is 2I2}, we prove that for a general member of this family, $r_k = \delta_k$. Specifically, we show that $\Df(\cE) = 0$ outside of Zariski closed set determined by conditions on the coefficients of the polynomials $a_i(T)$.

We also apply Theorems \ref{theorem_rank_k} and \ref{theorem: Determine Df} to recover previous results in \cite{ALRM2007} and \cite{battistoni2021ranks}. The recovered results are stated in Section \ref{section:survey}. At last, we exhibit formulas for the ranks $r_k$ in families not covered by \cite{ALRM2007} and \cite{battistoni2021ranks} (see Example \ref{example1} and Proposition \ref{prop: A=0, deg(B)=3}). 

\subsection{Organization of the paper}

Section \ref{section:survey} surveys Nagao's conjecture and its applications to determine the Mordell--Weil rank of elliptic curves as in Equation \ref{equation1}.

Section \ref{section:preliminaries} is a brief overview of some pre-requisites for the main part of the paper. Namely, we present the basic definitions of elliptic surfaces, conic bundles, and some of the main results in their study.

Section \ref{section_CB_and_RES} deals with a general rational surface $X$ with a relatively minimal elliptic fibration $\pi \colon X \to \PP^1$ and a conic bundle $\varphi \colon X \to \PP^1$. In \ref{section:generalities}, we adapt results from standard conic bundles to obtain a description of the Néron--Severi group $\NS(\overline{X})$ and the canonical divisor $K_{X}$ (see Proposition \ref{prop_basis_canonical}). In \ref{subsection:rank delta}, we compare the number $\delta$ of fibers of type $A_n$ in $\varphi$ and the rank $r$ of the generic fiber of $\pi$.

In Section \ref{section:CBs_Weierstrass} we apply the results of the previous section to surfaces $X$ given by Equation \ref{equation1} and Equation \ref{equation2}. In \ref{subsection: determining fiber types}, we determine the types of the singular fibers of the conic bundle $\varphi$ from Equation \ref{equation2} (see Proposition \ref{determining_conic_fibers}). In \ref{section:defect_rank}, we define the defect of $X$, and prove that the points $P_{\theta}$ induced by the conic bundle $\varphi$ generate a finite index subgroup of $\cE(\overline{k}(T))$ (see Theorem \ref{theorem:kernel_equals_defect} and Corollary \ref{coro:finite index subgroup}). In \ref{section:rank k bounds} we define the number $\delta_k$ in terms of the action of $\Galk$ on the fibers of $\varphi$ and prove Theorem \ref{theorem_rank_k}).

Section \ref{section: computations of the rank} deals with the application of Theorem \ref{theorem_rank_k} to determine the rank $r_k$ of families of curves given by Equation \ref{equation1}. In 
\ref{section: computation of defect}, we determine $\Df(\cE)$ from the fiber configuration of $\pi$ and $\varphi$ (see Theorem \ref{theorem: Determine Df}). In \ref{section: Df = 0}, we explore cases in which $\Df(\cE) = 0$, and provide families for which $r_k = \delta_k$. In \ref{section: Df > 0} we explore cases with $\Df(\cE) > 0$, and provide families with $\Df(\cE) = 1$ for which we can determine if $r_k = \delta_k$ or $r_k = \delta_k - 1$ depending on the coefficients of $\cE$ in Equation \ref{equation2}. The computations in this section were done with the aid of the computer algebra system Magma \cite{magma}.

\begin{acknowledge}
    The author thanks Cecília Salgado and Jaap Top for many producitve conversations regarding this topic, and specifically to the first for the suggestion of the problem. The author was financially supported by a scholarship from Capes and the University of Groningen. 
\end{acknowledge}

\section{Survey of related results}\label{section:survey}

In this section, we state Nagao's conjecture and give a brief exposition on subsequent theorems and applications. We follow the original exposition of the conjecture in \cite{nagao1997}, so we work over the $\QQ$.

Let $\cE$ be an elliptic curve over $\QQ(T)$, given by an equation in Weierstrass form:

\begin{equation*}
    \cE: \ y^2 + a_1(T)xy + a_3(T) y = x^3 + a_2(T)x + a_4(T)x + a_6(T),
\end{equation*}

with coefficients $a_i(T) \in \ZZ[T]$. For each $t \in \QQ$, the specialization map $T \mapsto t$ yields an elliptic curve $\cE_t$ over $\QQ$. For each prime $p \in \ZZ_{>0}$ of good reduction, we consider $a_t(p)$ the trace of the Frobenius at $p$ on $\cE_t$, given by $p + 1 - N_t(p)$, where $N_t(p)$ is the number of $\FF_p$ points of $\cE_t$ (we say that $a_t(p) = 0$ if $p \mid \Delta(t)$). Further, consider the average over fibers

\begin{equation*}
 A_{\cE}(p) := \frac{1}{p}\sum_{t=0}^{p-1} a_t(p).
\end{equation*}

In 1997, based on several explicit calculations for the Mordell--Weil rank of $\cE(\QQ(T))$ on nontrivial families, Nagao conjectured a limit formula relating $\rank \cE(\QQ(T))$ to the values of $A_{\cE}(p)$ (see \cite[Question (2)]{nagao1997}).

\begin{conjecture}
Let $\cE$, $A_{\cE}(p)$ be as defined above, then 
\begin{equation*}
    \lim_{X \rightarrow \infty} \frac{1}{X} \sum_{p \leq X} -A_{\cE}(p) \log p = \rank \cE(\QQ(T)).
\end{equation*}
\end{conjecture}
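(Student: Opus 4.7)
Since Nagao's conjecture remains open in full generality, any proof proposal must be conditional. The plan is to follow the strategy of Rosen and Silverman, which reduces the statement to the Tate conjecture for the Kodaira--N\'eron elliptic surface $\pi \colon X \to \PP^1$ associated to $\cE$ (re-labelling the sum-bound variable in the statement as $N$ to avoid collision with the surface $X$).

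First I would translate the inner sum $\sum_{t=0}^{p-1} a_t(p)$ into a point-counting statement on the good reduction of $X$ modulo $p$. Writing $a_t(p) = p + 1 - \#\cE_t(\FF_p)$ (with the convention $a_t(p) = 0$ at bad fibers), summation over $t \in \FF_p$ assembles $\#X(\FF_p)$ up to controlled contributions from bad fibers and from the section at infinity. Grothendieck's trace formula for $\ell$-adic cohomology then yields
\[
\sum_{t=0}^{p-1} a_t(p) = -\tr\bigl(\mathrm{Frob}_p \mid \HH^2(X_{\overline{\FF}_p}, \QQ_\ell)\bigr) + O(\sqrt{p}),
\]
with error bounded by the Weil estimates on $\HH^1$ and $\HH^3$ together with corrections from bad fibers.

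Next I would decompose $\HH^2(X_{\overline{\FF}_p}, \QQ_\ell)$ along Shioda--Tate lines. The classes of the zero section, of a smooth fiber, and of the non-identity components of reducible fibers of $\pi$ span a sub-Galois-module on which Frobenius acts by multiplication by $p$. Conditional on the Tate conjecture for $X$, the orthogonal complement of this submodule is spanned by divisor classes arising from the Mordell--Weil group, on which Frobenius again acts by $p$. Consequently $-\tr(\mathrm{Frob}_p \mid \HH^2) = -p \cdot \rank \NS(X_{\overline{\FF}_p})$ up to lower-order terms, so that $-A_\cE(p) \approx \rank \NS(X_{\overline{\FF}_p})$. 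Multiplying by $\log p$, summing over primes $p \leq N$, and invoking the prime number theorem yields limiting value $\rank \NS(X_{\overline{\QQ}})$. By Shioda--Tate, this rank equals $2 + \sum_v (m_v - 1) + \rank \cE(\QQ(T))$; the extra summand $2 + \sum_v (m_v - 1)$ is precisely the bad-fiber and trivial-lattice correction that must be tracked through the point count, and the two contributions cancel, leaving $\rank \cE(\QQ(T))$ as required.

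The principal obstacle is of course the Tate conjecture for $X$, which is the conditional input at every stage. Fortunately, for the curves $\cE$ studied in the remainder of this paper the Kodaira--N\'eron surface $X$ is rational, so $b_2 = \rank \NS$ unconditionally and the Tate conjecture is known in that case. Hence Nagao's conjecture, though open in general, is in fact a theorem in the setting relevant to the sequel.
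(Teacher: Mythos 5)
This statement is Nagao's conjecture: the paper does not prove it, and indeed no proof is known. It is stated as a conjecture in the survey section, and the paper only quotes the Rosen--Silverman theorem that it holds for rational elliptic surfaces, which is what is actually used later. So your proposal cannot be compared against a proof in the paper; what you have written is, as you acknowledge, a conditional sketch of the Rosen--Silverman reduction to Tate's conjecture, plus the observation that for the rational surfaces relevant to the sequel the conjecture is a theorem. That last observation is correct and matches how the paper uses the statement, but it is an appeal to the cited theorem rather than a proof of the conjecture as stated.

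As a sketch of Rosen--Silverman, the argument has two genuine gaps. First, the claim that $-\tr\bigl(\mathrm{Frob}_p \mid \HH^2(X_{\overline{\FF}_p},\QQ_\ell)\bigr) = -p\cdot\rank \NS(X_{\overline{\FF}_p})$ ``up to lower-order terms'' is false prime by prime: the eigenvalues on the transcendental part of $\HH^2$ have absolute value exactly $p$ (Deligne), so their contribution is of the same order as the algebraic part and only disappears after averaging over $p$. Making that averaging precise is the analytic heart of Rosen--Silverman (a Tauberian theorem applied to the $L$-function of $\HH^2$, where Tate's conjecture is exactly the statement that the order of the pole at $s=2$ equals the rank of $\NS$); relatedly, Tate's conjecture does not say that the orthogonal complement of the trivial lattice is spanned by Mordell--Weil divisor classes --- that would force $\HH^2$ to be entirely algebraic, which fails for non-rational elliptic surfaces. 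Second, your bookkeeping at the end produces the wrong rank: averaging $\tr(\mathrm{Frob}_p)$ on $\NS(X_{\overline{\QQ}})\otimes\QQ_\ell$ gives, via Chebotarev, the rank of the Galois-invariant part, not $\rank\NS(X_{\overline{\QQ}})$ itself; your route (geometric $\NS$ rank minus the full trivial-lattice rank $2+\sum_v(m_v-1)$) would yield the geometric rank $\rank\cE(\overline{\QQ}(T))$, whereas the conjecture concerns $\rank\cE(\QQ(T))$. The Galois action on fiber components and on sections must be kept throughout, and only the invariant parts cancel to leave the $\QQ(T)$-rank.
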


In the following year, Rosen and Silverman published a proof that Tate's conjecture implies Nagao's conjecture, settling it in particular for rational elliptic surfaces (see \cite[Theorem 1.3]{rosen-silverman1998}).

\begin{thm}[Rosen, Silverman]
Nagao's conjecture holds for rational elliptic surfaces.
\end{thm}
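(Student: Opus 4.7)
The plan is to follow Rosen and Silverman's approach, which derives Nagao's conjecture from Tate's conjecture applied to the Kodaira--Néron model $\pi \colon X \to \PP^1_{\QQ}$ of $\cE$. The argument proceeds in three steps: first identify $A_{\cE}(p)$ with a trace of Frobenius on étale cohomology, then use Tate's conjecture to relate the associated $L$-function to $\rank \cE(\QQ(T))$, and finally apply a Tauberian argument to extract the limit in Nagao's formula.

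For the first step, I would count $\FF_p$-points of $X$ fiberwise along $\pi$. This gives
\begin{equation*}
\#X(\FF_p) = (p+1)^2 - p \cdot A_{\cE}(p) + O(1),
\end{equation*}
with error bounded uniformly by the number of singular fibers. Since $X$ is rational, $\HH^1(\overline{X}, \QQ_\ell) = \HH^3(\overline{X}, \QQ_\ell) = 0$, so Grothendieck--Lefschetz yields $\#X(\FF_p) = 1 + p^2 + \tr(\mathrm{Frob}_p \mid \HH^2(\overline{X}, \QQ_\ell))$. Combining the two formulas,
\begin{equation*}
-p \cdot A_{\cE}(p) = \tr(\mathrm{Frob}_p \mid \HH^2(\overline{X}, \QQ_\ell)) - 2p + O(1).
\end{equation*}

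For the second step, I would assemble the local factors into the $L$-function $L(\HH^2, s) = \prod_p \det(1 - \mathrm{Frob}_p \cdot p^{-s} \mid \HH^2)^{-1}$. By Deligne's bound each Frobenius eigenvalue has absolute value $p$, so the product converges for $\Re(s) > 2$, and the order of the pole at $s = 2$ equals the total multiplicity of $p$ as a Frobenius eigenvalue---which by Tate's conjecture equals $\rank \NS(X_{\QQ})$. For rational surfaces Tate's conjecture is automatic, since $\HH^{2,0}(X) = 0$ forces every class in $\HH^2$ to be algebraic. Shioda--Tate then gives
\begin{equation*}
\rank \NS(X_{\QQ}) = 2 + \sum_{v} (m_v - 1) + \rank \cE(\QQ(T)),
\end{equation*}
where $v$ runs over singular fibers and $m_v$ counts their $\QQ$-rational components. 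The first two contributions correspond exactly to the $-2p$ shift and the $O(1)$ bad-fiber error from step one, so only $\rank \cE(\QQ(T))$ survives as the effective pole order.

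The third step is a Tauberian argument converting this effective pole into the asymptotic $\frac{1}{X}\sum_{p \le X} -A_{\cE}(p)\log p \to \rank \cE(\QQ(T))$. The main obstacle is that $L(\HH^2, s)$ is not a priori known to admit meromorphic continuation past $\Re(s) = 2$, so Wiener--Ikehara is not directly available. Rosen and Silverman circumvent this by using Deligne's bound $|\alpha| = p$ on the transcendental Frobenius eigenvalues to control the fluctuating part of the trace, then applying a classical partial-summation estimate of Landau which extracts the leading asymptotic directly from the Euler product, yielding Nagao's limit formula without requiring the full global continuation.
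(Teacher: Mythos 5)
The paper does not prove this statement; it is quoted from Rosen--Silverman and justified only by the citation, so I am comparing your sketch with their actual argument. Your overall skeleton (fiberwise point count, Grothendieck--Lefschetz with $\HH^1=\HH^3=0$ for a rational surface, automatic algebraicity of $\HH^2$ since $h^{2,0}=0$, Shioda--Tate, then an averaging argument over primes) is the right one. But two steps have genuine problems. First, the error in your point count is not $O(1)$: a reducible fiber of $\pi$ over $\FF_p$ contributes roughly $(m_v^{(p)}-1)\,p$ extra points beyond $p+1-a_t(p)$, where $m_v^{(p)}$ is the number of components defined over $\FF_p$, and the omission of the fiber at infinity in Nagao's sum $\sum_{t=0}^{p-1}$ versus the count over $\PP^1(\FF_p)$ is a further term of size $p$. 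These order-$p$ terms are exactly what produces, after averaging, the quantity $2+\sum_v(m_v-1)$ that Shioda--Tate subtracts; calling them $O(1)$ and then also claiming they match $\sum_v(m_v-1)$ is inconsistent, and with a true $O(1)$ error the $-2p$ shift alone would give the wrong limit. The correct bookkeeping is that $-pA_{\cE}(p)$ equals $\tr(\mathrm{Frob}_p\mid \HH^2)$ minus $p$ times (one plus the number of $\FF_p$-rational non-identity fiber components plus one for the zero section), up to $O(\sqrt{p})$.

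Second, your step three misdescribes what makes the theorem unconditional for rational elliptic surfaces. Since $\HH^2(\overline{X},\QQ_\ell)$ is entirely spanned by divisor classes, Frobenius acts on it as $p$ times a permutation-type representation factoring through a finite quotient of $\Gal(\overline{\QQ}/\QQ)$ acting on $\NS(\overline{X})$; there are no transcendental eigenvalues to control with Deligne's bound, and no issue of meromorphic continuation arises, because the relevant $L$-function is a product of shifted Artin $L$-functions. The limit $\frac{1}{X}\sum_{p\le X}\tr(\sigma_p\mid \NS(\overline{X}))\log p \to \dim \NS(\overline{X})^{\Gal(\overline{\QQ}/\QQ)}$ is then just the Chebotarev density theorem (prime number theorem for Artin $L$-functions), and Shioda--Tate over $\QQ$ converts the invariant dimension into $2+\sum_v(m_v-1)+\rank\cE(\QQ(T))$, with $m_v$ counting Galois orbits of components. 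Your appeal to "Deligne's bound on transcendental eigenvalues plus a Landau partial-summation estimate" is the shape of the conditional general case (Tate implies Nagao), not of the unconditional rational case, and as written it leaves the key analytic step unproved. With the order-$p$ bookkeeping fixed and the Chebotarev argument substituted for your third step, the proof goes through.
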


In \cite{ALRM2007}, Arms, Lozano-Robledo and Miller apply Rosen and Silverman's result to elliptic curves over $\QQ(T)$ given by an equation of the form

\begin{align}\label{alrm_equation}
    y^2 &= x^3T^2 + 2g(x)T -h(x), \text{ where}\\
    \nonumber g(x) &= x^3 + ax^2 + bx + c, \, c \neq 0;\\
    \nonumber h(x) &= (A-1)x^3 + Bx^2 + Cx + D.
\end{align}

Applying the coordinate change $x \mapsto \tfrac{x}{T^2+2T-A+1}$ to Equation \ref{alrm_equation}, we check that it indeed corresponds to a rational elliptic surface (see \cite[Chapter 5.13]{schutt-shioda}). The theory of quadratic Legendre sums is used to prove the following.

\begin{thm}\label{ALRM}
    For infinitely many integers $a,b,c,A,B,C,D$, the polynomial $D_T(x) = g(x)^2 + x^3h(x)$ has 6 distinct, nonzero roots which are perfect squares, and the curve $\cE$ given by Equation \ref{alrm_equation} is an elliptic curve over $\QQ(T)$ with $\rank \cE(\QQ(T)) = 6$.
\end{thm}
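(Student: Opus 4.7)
The plan is to produce explicit integer tuples $(a,b,c,A,B,C,D)$ with the stated factorization property, then invoke Rosen--Silverman together with a direct character-sum computation to pin the rank at exactly $6$.

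\textbf{Step 1.} Writing Equation \ref{alrm_equation} in the form of Equation \ref{equation2} gives $A(x) = x^3$, $B(x) = 2g(x)$, $C(x) = -h(x)$, so
\begin{equation*}
    \Deltaconic(x) = B(x)^2 - 4A(x)C(x) = 4\bigl(g(x)^2 + x^3 h(x)\bigr) = 4 D_T(x).
\end{equation*}
By Equation \ref{points from CB}, each nonzero root $\theta$ of $D_T(x)$ yields a point $P_\theta = (\theta, \sqrt{\theta^3}(T - g(\theta)/\theta^3)) \in \cE(\overline{\QQ}(T))$, which is $\QQ(T)$-rational precisely when $\theta$ is a perfect square in $\QQ$.

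\textbf{Step 2.} Construct the family by inverse engineering. The map sending $(a,b,c,A,B,C,D) \in \QQ^7$ to the coefficient vector of the degree $6$ polynomial $D_T(x)$ is a polynomial map between affine spaces of the same dimension, and a direct computation shows it is invertible whenever the leading coefficient $A \neq 0$. So for any six distinct nonzero rationals $s_1,\dots,s_6$ and any nonzero integer $A$, one solves uniquely for $(a,b,c,B,C,D) \in \QQ^6$ with $D_T(x) = A \prod_{i=1}^6 (x - s_i^2)$. Scaling $(x,y) \mapsto (n^2 x, n^3 y)$ with an appropriate $n$ clears denominators and produces integer tuples. Outside a proper Zariski closed subset of parameter space, $\Deltaell(T)$ does not vanish identically and the coefficients $a_i(T)$ share no common $(T-c)^2$ factor, so $\cE$ is an elliptic curve over $\QQ(T)$; this yields infinitely many qualifying tuples.

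\textbf{Step 3.} The lower bound $\rank \cE(\QQ(T)) \geq 6$ follows because the six points $P_{s_i^2}$ are $\QQ(T)$-rational, and at a generic specialization $T = t_0 \in \QQ$ their $x$-coordinates $s_1^2, \ldots, s_6^2$ remain distinct, so by Silverman's specialization theorem they generate a rank-$6$ subgroup. For the upper bound, Equation \ref{alrm_equation} defines a rational elliptic surface (the change of coordinates $x \mapsto x/(T^2+2T-A+1)$ puts it in short Weierstrass form with $\deg a_i \leq 2i$; cf.\ \cite[Chapter 5.13]{schutt-shioda}), so Rosen--Silverman applies. Fix a prime $p$ of good reduction with $p$ not dividing any $s_i$ nor $2c$. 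Exploiting the conic bundle structure, fix $x$: the right-hand side of Equation \ref{alrm_equation} is a quadratic in $t$ with leading coefficient $x^3$ and discriminant $4 D_T(x)$, and the standard evaluation of quadratic Legendre sums gives, for $p \nmid x$,
\begin{equation*}
    \sum_{t=0}^{p-1}\left(\frac{x^3 t^2 + 2g(x)t - h(x)}{p}\right) = \begin{cases}(p-1)\left(\frac{x}{p}\right) & \text{if } p \mid D_T(x), \\ -\left(\frac{x}{p}\right) & \text{if } p \nmid D_T(x).\end{cases}
\end{equation*}
The $x = 0$ case contributes $0$ since $2ct - D$ is linear in $t$. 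Summing over $x$ and using $\sum_{x=1}^{p-1}\left(\tfrac{x}{p}\right) = 0$ gives the clean identity
\begin{equation*}
    -A_\cE(p) = \sum_{i=1}^{6}\left(\frac{s_i^2}{p}\right) = 6,
\end{equation*}
since each $s_i^2$ is a nonzero square modulo $p$. The Nagao limit therefore equals $6$, and Rosen--Silverman yields $\rank \cE(\QQ(T)) = 6$.

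\textbf{Main obstacle.} The crux is the last identity: the perfect-square hypothesis is used precisely to force each Legendre symbol $\left(\tfrac{\theta}{p}\right)$ to equal $+1$ at every good prime, so that the Nagao average equals the \emph{number} of roots rather than some smaller quantity. Without it, Chebotarev would kill the contribution of each non-square root on average, producing a strictly smaller limit corresponding to a strictly smaller Mordell--Weil rank.
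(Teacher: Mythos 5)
Your overall strategy coincides with the one the paper merely cites: \cite[Theorem 2.1]{ALRM2007} is proved exactly by exhibiting suitable parameters, checking the surface is rational, and evaluating the Nagao average via quadratic Legendre sums, then invoking Rosen--Silverman; your Step 3 computation, giving $-A_{\cE}(p)=\sum_{i=1}^{6}\bigl(\tfrac{s_i^2}{p}\bigr)=6$ for all sufficiently large good $p$ (up to the standard bookkeeping for the finitely many bad $t$ and bad $p$, and for $p$ dividing the leading coefficient), is correct and does yield rank exactly $6$, since Nagao's statement is an equality for rational elliptic surfaces. The genuine problem is in Step 2. Writing out $D_T(x)=g(x)^2+x^3h(x)$ coefficientwise, the constant term is $c^2$, the $x$-coefficient is $2bc$, and the $x^2$-coefficient is $b^2+2ac$; so the coefficient map is \emph{not} invertible over $\QQ$ whenever $A\neq 0$. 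A target polynomial $A\prod_{i=1}^{6}(x-s_i^2)$ admits a rational preimage only if its constant term $A\prod_i s_i^2$ is a rational square, i.e.\ only if $A$ is a square, and even then $c$ is determined only up to sign, so ``solves uniquely for any nonzero integer $A$'' fails outright (take $A=2$: there is no rational $c$ with $c^2=2\prod_i s_i^2$). The conclusion survives because one may restrict to square $A$ (or prescribe $c$ and the roots first and solve for the remaining coefficients), which still produces infinitely many tuples; note also that clearing denominators uses $x\mapsto x/n^2$, $y\mapsto y/n^3$ (scaling the coefficients up by powers of $n$), the opposite of the substitution you wrote, and one should check the chosen subfamily avoids the locus $\Deltaell\equiv 0$ rather than just asserting it.

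A second, smaller defect: the first half of Step 3 is not a valid argument. Distinctness of the $x$-coordinates $s_1^2,\dots,s_6^2$ at a specialization says nothing about independence of the points, and Silverman's specialization theorem transfers independence from the generic fiber to special fibers, not the reverse. This paragraph is harmless only because it is redundant: Rosen--Silverman already gives the rank as an equality with the Nagao limit, so the character-sum identity alone proves $\rank\cE(\QQ(T))=6$, and you should drop the specialization claim rather than rely on it for the lower bound.
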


\begin{proof}
    See \cite[Theorem 2.1 and Remark 2.2]{ALRM2007}.
\end{proof}

This was later generalized to any number field in \cite[Theorem 1.1]{miller2015}.

A similar strategy was employed by Battistoni, Bettin and Delaunay in \cite{battistoni2021ranks} to obtain rank formulas for elliptic curves over $\QQ(T)$ of the form

\begin{equation}\label{equation_BBD}
    y^2 = A(x)T^2 + B(x)T + C(x),
\end{equation}
where $\deg A(x), \deg B(x) \leq 2$, at least one of $A(x)$, $B(x)$ is not identically zero, and $C(x)$ is monic and of degree 3. They obtain four distinct formulas for $\rank \cE(\QQ(T))$, depending on if $A(x) = 0$, $A(x) = \mu \in \QQ^{\times}$, $\deg(A) = 1$ or $\deg(A) = 2$, which we omit for simplicity (see \cite[Theorem 1]{battistoni2021ranks}).

When $A(x) = 0$, the formula is simplified as follows.

\begin{thm}[Battistoni, Bettin, Delaunay]\label{rank A=0}
    Let $\cE$ be an elliptic curve over $\QQ(T)$ given by an equation of the form \ref{equation_BBD}, and assume $A(x) = 0$. Then, the rank of $\cE$ is as follows.

    \begin{equation*}
        \rank \cE(\QQ(T)) = \#\{[\theta]: B(\theta) = 0, C(\theta) \in \QQ(\theta)^2\setminus\{0\}\},
    \end{equation*}
    where $[\theta]$ denotes the orbit of $\theta$ by the action of the Galois group $\Gal(\overline{\QQ}/\QQ)$.
\end{thm}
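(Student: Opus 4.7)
\emph{Proof proposal.}
The plan is to apply the sandwich bound $\delta_\QQ - \Df(\cE) \leq r_\QQ \leq \delta_\QQ$ of Theorem \ref{theorem_rank_k}, and, under the hypothesis $A(x) = 0$, to verify both that $\delta_\QQ$ equals the cardinality on the right-hand side of the stated formula and that $\Df(\cE) = 0$; this forces the two bounds to coincide and gives the claim.

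First, I would locate the reducible fibers of the conic bundle $\varphi \colon X \to \PP^1_x$. When $A = 0$, the polynomial $\Deltaconic(x) = B(x)^2 - 4A(x)C(x)$ reduces to $B(x)^2$, so the roots of $\Deltaconic$ are precisely the roots of $B$. By Proposition \ref{determining_conic_fibers} and formula \ref{points from CB}, each root $\theta$ of $B$ yields the point $P_\theta = (\theta, \sqrt{C(\theta)})$: the fiber $\varphi^{-1}(\theta)$ is of type $A_n$ precisely when $C(\theta) \neq 0$, with two $(-1)$-components corresponding to the pair $P_\theta$ and $-P_\theta = (\theta, -\sqrt{C(\theta)})$; and it is of type $D_n$ when $C(\theta) = 0$, with $P_\theta$ a $2$-torsion point. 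The two $(-1)$-components of such an $A_n$ fiber are individually defined over $\QQ(\theta)$ if and only if $\sqrt{C(\theta)} \in \QQ(\theta)$, i.e., iff $C(\theta)$ is a nonzero square in $\QQ(\theta)$. By the definition of $\delta_\QQ$ via the Galois action on the components of the $A_n$ fibers, $\delta_\QQ$ therefore equals the claimed count.

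Second, I would prove $\Df(\cE) = 0$ by applying Theorem \ref{theorem: Determine Df}, which reduces this to a finite computation depending only on the type of the fiber $G_\infty$ of $\varphi$ at $x = \infty$ and on the types of the singular fibers of $\pi$ that share components with $G_\infty$. Under the assumption $A = 0$ and $\deg C = 3$, Proposition \ref{fiber_at_infinity} should determine $G_\infty$ in a uniform way, since the leading $x^3$ term of $C$ dominates and forces a predictable degeneration at $x = \infty$. A short case analysis on $\deg B \in \{0, 1, 2\}$, together with whether $B$ is a perfect square, should place the resulting configuration into the $\Df(\cE) = 0$ clause of Theorem \ref{theorem: Determine Df}.

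The main obstacle is this last step: tracking the exact structure of $G_\infty$ is slightly delicate because, when $A = 0$, the affine fibers of $\varphi$ are parabolas rather than generic smooth conics, so the compactification at $x = \infty$ must be handled carefully. Once $\Df(\cE) = 0$ is secured, combining it with the first step and Theorem \ref{theorem_rank_k} yields $r_\QQ = \delta_\QQ$, which is exactly the formula of Theorem \ref{rank A=0}.
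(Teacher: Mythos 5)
Your overall strategy is sound, and it is essentially how the paper itself re-derives this result: in the paper, Theorem \ref{rank A=0} is a survey statement whose proof is a citation to \cite{battistoni2021ranks}, but Proposition \ref{prop: A = 0} later recovers and generalizes it to any number field by exactly your route (compute $\delta_k$, show $\Df(\cE)=0$, apply Theorem \ref{theorem_rank_k}). Your first step is fine: with $A\equiv 0$ one has $\Deltaconic = B^2$, Proposition \ref{determining_conic_fibers} sorts the fibers over roots of $B$ into $A_n$ (when $C(\theta)\neq 0$) and $D_n$ (when $C(\theta)=0$), and Definition \ref{definition: delta_k} then makes $\delta_\QQ$ literally equal to the count $\#\{[\theta]: B(\theta)=0,\ C(\theta)\in\QQ(\theta)^2\setminus\{0\}\}$.

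The genuine gap is in your second step, and it is precisely the point you flag but do not resolve. Theorem \ref{theorem: Determine Df} does \emph{not} automatically yield $\Df(\cE)=0$ once $G_\infty$ is known: with $A\equiv 0$ one gets $G_\infty$ of type $D_{9-2\deg B}$, and while the $D_9$ and $D_7$ cases ($\deg B\leq 1$) give $\Df(\cE)=0$ unconditionally from Table \ref{Table_configurations}, the case $\deg B=2$ gives $G_\infty$ of type $D_5$, where the table allows $\Df(\cE)=1$ exactly when the elliptic fiber $F_a$ sharing components with $G_\infty$ is of type $I_1^*$. You must rule this out, and your proposed dichotomy on whether $B$ is a perfect square is not the relevant criterion. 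The paper's argument is: writing the curve in the form of Equation \ref{equation1}, $a_3(T)$ is a nonzero constant and $\deg a_i\leq 1$ for $i=0,1,2$, so $\deg(\Deltaell)\leq 3$; Tate's Algorithm at $T=\infty$ then forces an additive fiber with at least $7$ components, i.e.\ $F_a$ of type $I_m^*$ with $m\geq 2$, which lands in the $\Df(\cE)=0$ row of the table. Without this (or an equivalent) computation the sandwich $\delta_\QQ \geq r_\QQ \geq \delta_\QQ - \Df(\cE)$ only gives $r_\QQ \in \{\delta_\QQ,\delta_\QQ-1\}$ in the $\deg B = 2$ case, and the claimed equality does not follow.
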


\begin{proof}
    See \cite[Theorem 2]{battistoni2021ranks}.
\end{proof}

When $A(x) = \mu \in \QQ^{\times}$, the formula is also simplified.

\begin{thm}[Battistoni, Bettin, Delaunay]\label{rank deg(A) = 0}
Let $\cE$ be an elliptic curve over $\QQ(T)$ given by an equation of the form \ref{equation_BBD}, and assume $A(x) = \mu \in \QQ^{\times}$. Then, the rank of $\cE$ is as follows.
    \begin{equation*}
        \rank \cE(\QQ(T)) = \begin{cases}
            \#\{[\theta]: B^2(\theta) - 4\mu C(\theta) = 0)\} - 1 & \text{if $\mu \in \QQ^2$,}\\
            \#\{[\theta]: B^2(\theta) - 4\mu C(\theta) = 0), \mu \in \QQ(\theta)^2\setminus\{0\}\} & \text{if $\mu \in \QQ \setminus \QQ^2$}.
            \end{cases}
    \end{equation*}
\end{thm}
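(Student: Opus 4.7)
The plan is to specialize Theorem \ref{theorem_rank_k} to this family and pin down the defect via Theorem \ref{theorem: Determine Df}. Since $A(x) = \mu$ is a nonzero constant, one has $\Deltaconic(x) = B(x)^2 - 4\mu C(x)$, and by Equation \ref{points from CB} each root $\theta$ yields $P_\theta = \bigl(\theta,\,\sqrt{\mu}(T - B(\theta)/(2\mu))\bigr) \in \cE(\overline{\QQ}(T))$, defined over $\QQ(\theta,\sqrt{\mu})$. The two $(-1)$-components of the $A_n$-fiber over $\theta$ correspond to the two signs of $\sqrt{\mu}$ and are swapped by any $\sigma \in \Galk$ with $\sigma(\sqrt{\mu}) = -\sqrt{\mu}$; hence an orbit $[\theta]$ contributes to $\delta_k$ exactly when $\sqrt{\mu} \in \QQ(\theta)$, i.e.\ $\mu \in \QQ(\theta)^2 \setminus \{0\}$. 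This recovers both indexing sets in the formula: the condition is automatic when $\mu \in \QQ^2$ and gives the restriction in the second line otherwise.

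Next I would run the defect computation from Theorem \ref{theorem: Determine Df}. Because $A$ has degree $0$ while $C$ is cubic, the fiber $G_\infty$ of $\varphi$ at $x=\infty$ is reducible and shares components with specific fibers of the Weierstrass fibration $\pi$; the case-analysis of Theorem \ref{theorem: Determine Df} should give $\Df(\cE) = 1$ uniformly over this family, witnessed by a single linear relation in $\cE(\overline{\QQ}(T))$ among the points $\{P_\theta\}$. The crucial observation is that the class of the relevant component of $G_\infty$ in $\NS(\overline X)$ becomes individually $\Galk$-stable exactly when $\sqrt{\mu} \in \QQ$, so the relation itself has coefficients in $\QQ(\sqrt{\mu})$ and descends to $\QQ$ if and only if $\mu$ is a rational square.

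The last step is to locate $r_k$ inside the band $\delta_k - 1 \le r_k \le \delta_k$ provided by Theorem \ref{theorem_rank_k}. When $\mu \in \QQ^2$, the defect relation is $\Galk$-invariant and provides a genuine syzygy among the $\QQ$-rational points, forcing $r_k = \delta_k - 1$, which is the first line of the formula. When $\mu \notin \QQ^2$, the involution $\sqrt{\mu} \mapsto -\sqrt{\mu}$ twists the relation into an independent one, so its $\Galk$-invariant projection lies outside the span of the contributing $P_\theta$ and imposes no relation at the $\QQ$-level; hence $r_k = \delta_k$, matching the second line. The main obstacle will be making the claim about the field of definition of the defect relation fully precise, so that the dichotomy in the formula aligns exactly with the square/non-square distinction for $\mu$.
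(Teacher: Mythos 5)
Your overall strategy --- bound $r_k$ by Theorem \ref{theorem_rank_k}, pin the defect down to $1$ via Theorem \ref{theorem: Determine Df}, and decide the square/non-square dichotomy through the Galois action on the $P_\theta$ --- is exactly how the paper itself re-derives this statement (over arbitrary number fields) in Propositions \ref{prop: Df = 1} and \ref{prop: A = mu}; in the survey section the theorem is only cited from Battistoni--Bettin--Delaunay. But your sketch has a genuine gap at the decisive step. Knowing merely that $\Df(\cE)=1$, i.e.\ that $\ker\psi$ has rank one, is not enough: to get $r_k=\delta_k-1$ when $\mu\in\QQ^2$ you need an actual nontrivial relation among the invariant sums $\Sigma_i$, and to get $r_k=\delta_k$ when $\mu\notin\QQ^2$ you need to exclude one; both require knowing how the coefficients of the generator of $\ker\psi$ sit on Galois orbits. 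The paper imports this explicitly from \cite[Proposition 12]{battistoni2021ranks}: the unique relation (up to scaling) is $[n_{\theta_1}]P_{\theta_1}\oplus\cdots\oplus[n_{\theta_s}]P_{\theta_s}=O$ with $n_\theta=v_{(x-\theta)}(\Deltaconic)>0$, constant on orbits. With that, the square case follows by summing over orbits (all coefficients positive, so nothing cancels), and the non-square case follows because any relation among the $\Sigma_i$ would expand to a relation on $S$ whose coefficients have opposite signs within each orbit, hence cannot be proportional to the all-positive relation, contradicting $\rank(\ker\psi)=1$.

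Your substitute for this input does not work as stated. A relation among sections is a $\ZZ$-linear combination, so saying it ``has coefficients in $\QQ(\sqrt{\mu})$ and descends to $\QQ$ iff $\mu$ is a square'' does not typecheck, and the claim that the relevant component class of $G_{\infty}$ is individually $\Galk$-stable exactly when $\sqrt{\mu}\in\QQ$ is unsubstantiated and is not the mechanism that drives the dichotomy. Moreover, in the non-square case the involution $\sqrt{\mu}\mapsto-\sqrt{\mu}$ does not ``twist the relation into an independent one'': it sends $P_\theta$ to $-P_{\sigma(\theta)}$, hence sends the relation to its negative, which is dependent --- with $\Df(\cE)=1$ there is no room for an independent relation, and the correct argument is the sign comparison above. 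Finally, you assert rather than verify that Theorem \ref{theorem: Determine Df} yields $\Df(\cE)=1$ for this family; this needs the fiber computation the paper does in Proposition \ref{prop: Df = 1} (type $D_6$ meeting $IV^*$ when $\deg B\le 1$, type $D_5$ meeting $I_1^*$ when $\deg B=2$), since for neighbouring families (e.g.\ $A(x)=0$, or $\deg A\ge 1$ generically) the defect is $0$.
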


\begin{proof}
    See \cite[Theorem 1, Section 3.1]{battistoni2021ranks}.
\end{proof}

When $\deg(A) = 1$ or $2$, the formula provides the following upper bound for the rank.

\begin{thm}[Battistoni, Bettin, Delaunay]\label{rank deg(A) > 0}
    Let $\cE$ be an elliptic curve over $\QQ(T)$ given by an equation of the form \ref{equation_BBD}. Then, we have the following upper bound for the rank of $\cE$.
    \begin{equation*}
       \rank \cE(\QQ(T)) \leq \#\{[\theta]: B^2(\theta) - 4A(\theta)C(\theta) = 0\},
    \end{equation*}
    where $[\theta]$ denotes the orbit of $\theta$ by the action of the Galois group $\Gal(\overline{\QQ}/\QQ)$.
\end{thm}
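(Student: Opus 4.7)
The plan is to realize $\cE$ as the generic fiber of its Kodaira--Néron model $\pi \colon X \to \PP^1$, which under the hypothesis that $\deg A, \deg B \leq 2$ and $C$ is monic of degree $3$ is a rational elliptic surface, and to exploit the conic bundle structure $\varphi \colon X \to \PP^1$ read off directly from Equation \ref{equation_BBD}. With this setup, Theorem \ref{theorem_rank_k} applied to $k = \QQ$ yields
\begin{equation*}
    \rank \cE(\QQ(T)) \leq \delta_{\QQ}.
\end{equation*}
The whole task therefore reduces to showing that $\delta_{\QQ}$, the invariant attached in Section \ref{section:rank k bounds} to the action of $\Gal(\overline{\QQ}/\QQ)$ on the fibers of $\varphi$ of type $A_n$, is bounded above by the number of Galois orbits of roots of $\Deltaconic(x) = B(x)^2 - 4A(x)C(x)$.

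To establish this bound I would invoke Propositions \ref{determining_conic_fibers} and \ref{fiber_at_infinity}, which identify each reducible fiber of $\varphi$ above a finite point $x = \theta$ with a unique root $\theta$ of $\Deltaconic$, classified as being of type $A_n$ or of type $D_n$. Writing $R^A$ for the set of Galois orbits of those roots that produce $A_n$ fibers, the induced map from $R^A$ to the set of Galois orbits of $A_n$ fibers of $\varphi$ is a bijection, since the Galois action on fibers factors through its action on the base $\PP^1$. By the definition of $\delta_{\QQ}$, which counts such orbits subject to an additional quadratic descent condition (compare the $\sqrt{C(\theta)} \in \QQ(\theta)$ and $\mu \in \QQ(\theta)^2$ hypotheses appearing in Theorems \ref{rank A=0} and \ref{rank deg(A) = 0}), we obtain
\begin{equation*}
    \delta_{\QQ} \leq \#R^A \leq \#\{[\theta] : B^2(\theta) - 4A(\theta)C(\theta) = 0\},
\end{equation*}
which combined with the previous inequality is exactly the desired bound.

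The main obstacle is boundary bookkeeping. First, one must verify that the fiber of $\varphi$ above $x = \infty$, which has no corresponding finite root of $\Deltaconic$, is classified correctly so that no $A_n$ fiber is silently added to or removed from the count; this is the content of Proposition \ref{fiber_at_infinity}. Second, roots $\theta$ with $A(\theta) = 0$ force the second branch of Equation \ref{points from CB}, and one must check that each such $\theta$ still corresponds to a single reducible fiber of the expected type, rather than two fibers collapsing or a fiber failing to be reducible. Both issues are settled uniformly by Propositions \ref{determining_conic_fibers} and \ref{fiber_at_infinity}, which give a complete recipe for the singular-fiber types of $\varphi$ in terms of the polynomials $A$, $B$, $C$. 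Once those classifications are in hand, the inequality $\delta_{\QQ} \leq \#R^A$ is unambiguous and the theorem follows immediately from Theorem \ref{theorem_rank_k}.
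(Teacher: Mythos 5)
Your proposal is correct, but it takes a genuinely different route from the paper: here the statement is quoted as prior work and its proof is simply the citation to \cite[Proposition 14]{battistoni2021ranks}, whose argument rests on the analytic machinery surveyed in Section \ref{section:survey} (Nagao-type averages and the Rosen--Silverman theorem), whereas you rederive the bound from the paper's own geometric apparatus. Your chain — apply Theorem \ref{theorem_rank_k} with $k=\QQ$ to get $r_{\QQ}\leq\delta_{\QQ}$, then bound $\delta_{\QQ}$ by the number of Galois orbits of roots of $\Deltaconic$ — is sound and involves no circularity, since the proofs of Theorem \ref{theorem_rank_k}, Theorem \ref{theorem:kernel_equals_defect} and Propositions \ref{determining_conic_fibers}, \ref{fiber_at_infinity} nowhere use the Battistoni--Bettin--Delaunay results; indeed the Remark following Theorem \ref{theorem_rank_k} asserts exactly this recovery. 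Two small comments. First, your detour through the set $R^{A}$ of orbits producing $A_n$ fibers is unnecessary: Definition \ref{definition: delta_k} already defines $\delta_{\QQ}$ as a count of Galois orbits of roots of $\Deltaconic$ subject to extra square conditions, so $\delta_{\QQ}\leq\#\{[\theta]:\Deltaconic(\theta)=0\}$ is immediate, and the fiber at infinity never threatens the count since Proposition \ref{fiber_at_infinity} makes it of type $D_n$. Second, the only bookkeeping genuinely required is that a curve of the form \ref{equation_BBD} (with $C$ monic of degree $3$, hence $a_3(T)=1$) satisfies the standing hypotheses attached to Equation \ref{equation1}, so that the Kodaira--Néron model is a rational elliptic surface and Theorem \ref{theorem_rank_k} applies; you assume this implicitly and it should be said. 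As for what each approach buys: the citation imports a deep analytic input specific to $\QQ$, while your argument is self-contained within the paper's geometry and, as the paper itself notes, yields the stronger statement over any number field and with $a_3(T)$ non-constant.
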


\begin{proof}
    See \cite[Proposition 14]{battistoni2021ranks}.
\end{proof}

\section{Preliminaries}\label{section:preliminaries}

Throughout this section, let $k$ be a number field and $\overline{k}$ a fixed algebraic closure. For $X$ an algebraic surface defined over $k$, its model over $\overline{k}$ is $\overline{X} \colonequals X \times_k \overline{k}$.

\subsection{Elliptic surfaces}

Let $\cE$ be an elliptic curve over the function field $k(\PP^1) = k(T)$. The groups $\cE(\overline{k}(T))$ and $\cE(k(T))$ are finitely generated by a consequence of the Lang--Néron Theorem (see \cite{LangNeron}). Let $r$ and $r_{k}$ denote their respective ranks. The curve $\cE$ admits a  Weierstrass equation:

\begin{equation}\label{generic_fiber_equation}
    y^2 + a_1(T) y + a_3(T) xy = x^3 + a_2(T) x^2 + a_4(T) x + a_6(T).
\end{equation}

We homogenize Equation \ref{generic_fiber_equation} twice: once with respect to the coordinates $x,y$, and then with respect to the parameter $T$. This define a surface $W \subset \PP^2_{[x{:}y{:}z]} \times \PP^1_{[S{:}T]}$, with a natural map $W \rightarrow \PP^1$. After blowing up points of $W$ to resolve singularities, we obtain an elliptic surface $\pi \colon X \to \PP^1$ defined over $k$.

\begin{dfn}
 An \textit{elliptic surface} over $k$ is defined as a smooth algebraic surface $X$ endowed with a fibration $\pi \colon X \to C$ to a smooth curve $C$, where $X, C, \pi$ are all defined over $k$ and

\begin{itemize}
    \item[i)] all but finitely many fibers of $\pi$ are smooth irreducible genus 1 curves;
    \item[ii)] there is at least one singular fiber;
    \item[iii)] there is at least one section $\sigma_0:C \rightarrow X$ defined over $k$, (referred to as the \textit{zero-section}).
\end{itemize}

\begin{assumption}
    We restrict ourselves to the case $C \cong \PP^1$, as the elliptic surfaces we are interested in always have $\PP^1$ as a base curve.   
\end{assumption}

If no fiber components are $(-1)$-curves, we say that the elliptic surface is \textit{relatively minimal}. In what follows, every elliptic surface is assumed to be relatively minimal.
\end{dfn}

The elliptic surface $\pi: X \rightarrow \PP^1$ obtained from $\cE$ is called the \textit{Kodaira-Néron model} of $\cE$, and it is unique up to isomorphism. Furthermore, the generic fiber of $\pi$ is isomorphic to $\cE$ (see \cite[Chapter IV, Theorem 6.1]{silverman2}).

\begin{thm}
There is a bijection between the set of sections $\sigma \colon \PP^1 \rightarrow \overline{X}$ and the set of $\overline{k}(T)$-points of $\cE$. Furthermore, if $\sigma$ is defined over $k$, then it corresponds to a $k(T)$-point.
\end{thm}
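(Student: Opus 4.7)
The plan is to establish the bijection using the universal property of the generic fiber in one direction and the valuative criterion of properness in the other. For the forward direction, given a section $\sigma \colon \PP^1 \to \overline{X}$ of $\pi$, I would restrict $\sigma$ to the generic point $\eta \in \PP^1$. Since $\pi^{-1}(\eta) \cong \cE \times_k \overline{k}$ by the cited property of the Kodaira--Néron model, the restriction $\sigma|_{\eta}$ is a morphism $\Spec \overline{k}(T) \to \pi^{-1}(\eta)$, that is, a $\overline{k}(T)$-point of $\cE$.

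For the converse, suppose $P \in \cE(\overline{k}(T))$. Using the identification of the generic fiber with $\cE_{\overline{k}(T)}$, the point $P$ corresponds to a morphism $\Spec \overline{k}(T) \to \overline{X}$ over $\PP^1$. At each closed point $t \in \PP^1$, the local ring $\cO_{\PP^1,t}$ is a discrete valuation ring with fraction field $\overline{k}(T)$. Because $\overline{X}$ is projective over $\overline{k}$, the structure map $\pi$ is proper, so the valuative criterion produces a unique extension $\Spec \cO_{\PP^1,t} \to \overline{X}$ lifting the inclusion $\Spec \cO_{\PP^1,t} \hookrightarrow \PP^1$. These local extensions agree on the common generic point and glue to a morphism $\sigma_P \colon \PP^1 \to \overline{X}$ satisfying $\pi \circ \sigma_P = \id_{\PP^1}$. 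That the assignments $\sigma \mapsto \sigma|_{\eta}$ and $P \mapsto \sigma_P$ are mutually inverse is then formal from the uniqueness provided by the valuative criterion.

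For the final assertion about $k$-rationality, the same construction carried out over $k$ (using that $\pi \colon X \to \PP^1$ is already proper over $k$ and has generic fiber $\cE$) produces a section defined over $k$ from any $k(T)$-point. Equivalently, one can argue by Galois descent: the uniqueness in the valuative criterion forces any $\Galk$-translate of $\sigma_P$ to equal $\sigma_P$ whenever $P$ is Galois-invariant, so $\sigma_P$ descends to $k$.

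The main conceptual obstacle is ensuring that the lifted section lands inside the smooth resolution $\overline{X}$ rather than the singular Weierstrass model $W$ appearing in the construction sketched before the statement. This is handled cleanly by applying the valuative criterion directly to $\pi \colon \overline{X} \to \PP^1$ and using the smoothness and properness built into the Kodaira--Néron model; no explicit blow-up bookkeeping is needed.
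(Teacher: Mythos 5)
Your argument is correct: it is the standard valuative-criterion/generic-fiber argument, and it is essentially the proof underlying the reference the paper cites, since the paper itself gives no proof beyond pointing to \cite[Chapter III, Proposition 3.10.(c)]{silverman2}. The only point worth stating explicitly is that the local lifts $\Spec \cO_{\PP^1,t} \to \overline{X}$ spread out to open neighborhoods and glue with the generic-fiber morphism by separatedness, which is routine and does not affect correctness.
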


\begin{proof}
    See \cite[Chapter III, Proposition 3.10.(c)]{silverman2}.
\end{proof}

\begin{notation}
We denote the identity of $\cE(\overline{k}(T))$ by $O$. For any $P,Q \in \cE(\overline{k}(T))$, we denote their sum as $P \oplus Q$, the sum of $P$ with itself $n$ times by $[n]P$ and its respective inverse as $[-n]P$.  Let $\sigma \colon \PP^1 \to X$ be a section corresponding to a point $P$. Then, we also refer to the rational curve $\sigma(\PP^1) \subset X$ as a section, and denote it by $(P)$. We assume the zero-section of $\pi$ is equal to $(O)$. 
\end{notation}

We now turn to the fibers of $\pi \colon X \to \PP^1$. For each $t \in \PP^1_{\overline{k}}$, let $F_t$ denote the fiber $\pi^{-1}(t)$. By definition, at least one fiber is singular, and in total the number of singular fibers is finite. The singular fibers in an elliptic fibration have been classified in different types by Kodaira and Néron (see \cite{kodaira1963compact}, \cite{Neron1964}), and Tate's Algorithm describes how to determine the type of a particular fiber $F_t$ from Equation \ref{generic_fiber_equation} (see \cite{tate1975algorithm}, Sections 7 and 8). The Galois group $\Galk$ acts on the set of irreducible components of singular fibers of $\pi$. Furthermore, $\Galk$ permutes fibers $F_t$ not defined over $k$, preserving their Kodaira types.

\subsection{The Néron--Severi lattice}

Let $\NS(\overline{X})$ be the Néron--Severi group of $\overline{X}$ and denote its rank by $\rho(\overline{X})$.

\begin{prop}
Let $X$ be an elliptic surface. Then $\NS(\overline{X})$ has a lattice strucuture with the intersection pairing $(\,\cdot\,)$, which we call the Néron--Severi lattice. In other words, the intersection pairing is torsion free.
\end{prop}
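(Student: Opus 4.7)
The plan is to establish two things: (1) that the intersection pairing on divisors descends to a well-defined symmetric $\ZZ$-bilinear form on $\NS(\overline{X})$, and (2) that $\NS(\overline{X})$ is torsion-free (so that together with its finite generation, it qualifies as a lattice).

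For (1), I would recall that the intersection pairing on $\Divv(\overline{X})$ vanishes on principal divisors, hence descends to $\Pic(\overline{X})$. It further factors through algebraic equivalence because intersection numbers are locally constant in flat families of effective divisors, yielding a symmetric $\ZZ$-valued bilinear form on $\NS(\overline{X})$.

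For (2), my starting observation is that any torsion element $\alpha \in \NS(\overline{X})$ of order $n$ satisfies $n(\alpha \cdot \beta) = (n\alpha)\cdot\beta = 0$ in $\ZZ$ for every $\beta$, hence $\alpha \cdot \beta = 0$ for all $\beta$; i.e., torsion lies in the radical of the intersection pairing. By the Hodge Index Theorem, the pairing is non-degenerate on $\NS(\overline{X})\otimes \QQ$, so its radical over $\ZZ$ is precisely the torsion subgroup, and the claim reduces to showing $\NS(\overline{X})$ has no torsion. Here I would exploit the structural features of $X$: since $\overline{X}$ is an elliptic surface over $\PP^1$ with a singular fiber, $q(\overline{X}) = 0$, hence $\Pic^0(\overline{X}) = 0$ and $\NS(\overline{X}) = \Pic(\overline{X})$. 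The existence of a section then makes $\overline{X}$ simply connected (in characteristic zero); by universal coefficients $H^2(\overline{X}, \ZZ)$ is torsion-free, and the Chern class embedding $\Pic(\overline{X}) \hookrightarrow H^2(\overline{X}, \ZZ)$ transports this torsion-freeness to $\NS(\overline{X})$.

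The main obstacle I expect is the simply-connectedness of $\overline{X}$, which requires nontrivial geometric input about elliptic fibrations with section (the loops in a smooth fiber are killed by the action of the section combined with the vanishing cycles at the singular fibers). I would either cite Schütt--Shioda or Friedman--Morgan for this, or give a self-contained alternative using the Shioda--Tate description of $\NS(\overline{X})$ as the span of the zero-section, generic fiber, components of reducible fibers, and extra sections, and verify torsion-freeness directly from this explicit set of generators.
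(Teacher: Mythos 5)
The paper does not actually prove this statement: it is quoted directly from the literature, with the proof being a citation to Shioda (\emph{On the Mordell--Weil lattices}, Theorem 1.2). Your sketch is correct and essentially reconstructs the standard argument behind that citation: torsion classes are numerically trivial, so the lattice claim reduces to torsion-freeness of $\NS(\overline{X})$; then $q=0$ gives $\NS(\overline{X})=\Pic(\overline{X})$, simple connectedness of an elliptic surface over $\PP^1$ with section and a singular fiber gives torsion-free $\HH^2(\overline{X},\ZZ)$, and the first Chern class embedding finishes it. Two small points to tighten: since $k$ is a number field you should pass to $\CC$ via an embedding $\overline{k}\hookrightarrow\CC$ (invariance of $\NS$ under this base change) before invoking $\pi_1$, singular cohomology, and the exponential sequence; and $q(\overline{X})=0$ is really a consequence of the base being $\PP^1$ (for a relatively minimal elliptic surface with section, $q$ equals the genus of the base), whereas the singular fiber is what you need for simple connectedness --- note also that this restriction to base $\PP^1$ is exactly the paper's standing Assumption, while Shioda's cited theorem holds over any base curve, so your argument is slightly less general but sufficient here. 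Your proposed ``self-contained'' fallback via the Shioda--Tate generators is the weakest part: the Shioda--Tate isomorphism is usually established together with (or after) torsion-freeness of $\NS$, so deducing torsion-freeness from it risks circularity; the simply-connectedness route you put first is the right one.
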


\begin{proof}
See \cite[Theorem 1.2]{shioda90}.
\end{proof}

\begin{dfn}
    Let $\pi \colon X \to \PP^1$ be an elliptic surface over $k$ and $C$ be a prime divisor of $\overline{X}$. We say that $C$ is \textit{vertical} (with respect to $\pi$) if $\pi(C) = v \in \PP^1$. We say that $D \in \NS(\overline{X})$ is vertical if $D$ is represented by a sum of prime divisors which are all vertical. We define $\Triv(\overline{X})$ as the sublattice of $\NS(\overline{X})$ generated by the vertical divisors with respect to $\pi$ and the zero-section.
\end{dfn}

\begin{thm}\label{shioda-tate-isom}
Let $\pi \colon X \to \PP^1$ be an elliptic surface over $k$ with generic fiber $\cE$. Then, there is an isomorphism
\begin{equation*}
    \frac{\NS(\overline{X})}{\Triv(\overline{X})} \cong \cE(\overline{k}(C)).
\end{equation*}
\end{thm}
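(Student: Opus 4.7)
The plan is to construct the isomorphism explicitly via restriction of divisors to the generic fiber. Let $\cE_{\eta}$ denote the generic fiber of $\pi$, whose function field equals $\overline{k}(X)$ over $\overline{k}(C)$. For a prime divisor $D$ on $\overline{X}$, define its restriction $D|_{\cE_{\eta}}$: if $D$ is vertical (i.e.\ a component of some fiber $F_t$) we set $D|_{\cE_{\eta}} = 0$, and if $D$ is horizontal (dominates $C$ via $\pi$) then $D \cap \cE_{\eta}$ is a closed point of $\cE_{\eta}$. Extend by linearity to $\Divv(\overline{X})$, and then compose with the Abel--Jacobi map on $\cE_{\eta}$ to define
$$\Phi \colon \Divv(\overline{X}) \to \cE(\overline{k}(C)), \quad D \mapsto \bigl[D|_{\cE_{\eta}} - \deg(D|_{\cE_{\eta}}) \cdot O\bigr],$$
using that $\cE(\overline{k}(C)) \cong \Pic^0(\cE_{\eta})$ canonically since $O$ supplies a rational point.

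Next I would check that $\Phi$ descends to $\NS(\overline{X})$ and is a group homomorphism. For a principal divisor $\div(f)$ on $\overline{X}$, the function $f$ restricts to a nonzero rational function on $\cE_{\eta}$ (as $\cE_{\eta}$ is not contained in the pole or zero locus of $f$), so $\div(f)|_{\cE_{\eta}} = \div(f|_{\cE_{\eta}})$ is principal on $\cE_{\eta}$; hence $\Phi$ factors through $\Pic(\overline{X})$. Passing to algebraic equivalence is handled by noting that algebraic equivalence on $\overline{X}$ restricts to algebraic equivalence on $\cE_{\eta}$, which coincides with linear equivalence there. Group homomorphism and additivity of $\Phi$ follow from additivity of restriction and of the degree.

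Surjectivity follows from the correspondence recalled earlier in the preliminaries: every point $P \in \cE(\overline{k}(C))$ arises from a section $\sigma_P$ whose image is a horizontal prime divisor $(P)$ on $\overline{X}$, with $(P)|_{\cE_{\eta}} = P$ of degree one. Then $\Phi((P) - (O)) = P - O$, which corresponds to $P$ under the group law.

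The main point is the kernel computation, which identifies $\ker \Phi$ with $\Triv(\overline{X})$. One inclusion is immediate: the zero-section satisfies $\Phi((O)) = O$, and any vertical divisor restricts to zero on $\cE_{\eta}$, so $\Triv(\overline{X}) \subseteq \ker \Phi$. For the reverse, suppose $\Phi(D) = O$ and write $n = \deg(D|_{\cE_{\eta}})$. Then $D|_{\cE_{\eta}} - nO$ is principal on $\cE_{\eta}$, equal to $\div(f)$ for some $f \in \overline{k}(X)^{\times}$. The divisor $D' = D - n(O) - \div(f)$ is linearly equivalent to $D$ and satisfies $D'|_{\cE_{\eta}} = 0$, so every prime component of $D'$ is vertical; that is, $D' \in \Triv(\overline{X})$, and hence $D \in \Triv(\overline{X})$ modulo linear (hence algebraic) equivalence. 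The main obstacle is the bookkeeping needed to make the transition from linear equivalence on $\cE_{\eta}$ back up to $\overline{X}$ rigorous (i.e.\ extending the function $f$ as a rational function on $\overline{X}$ and controlling its principal divisor modulo vertical contributions), but this is exactly where the identification $\overline{k}(X) = \overline{k}(\cE_{\eta})$ is used.
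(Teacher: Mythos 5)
The paper does not prove this statement itself --- it simply cites Shioda --- and your proposal is essentially the standard argument behind that cited theorem: restrict divisors to the generic fiber, use $\Pic^0(\cE_{\eta}) \cong \cE(\overline{k}(C))$ via the zero point, get surjectivity from the section--point correspondence, and identify the kernel with $\Triv(\overline{X})$. Most of it is sound: horizontal prime divisors of $\overline{X}$ biject with closed points of $\cE_{\eta}$, so a divisor whose restriction vanishes is vertical, and your manipulation $D = D' + n(O) + \operatorname{div}(f)$ correctly places the class of $D$ in $\Triv(\overline{X})$ once $\operatorname{div}(f)$ is killed.

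There is, however, a genuine flaw in the step where you descend $\Phi$ from $\Pic(\overline{X})$ to $\NS(\overline{X})$. You justify it by asserting that algebraic equivalence on $\cE_{\eta}$ coincides with linear equivalence. That is false for a curve of positive genus: the N\'eron--Severi group of a curve is $\ZZ$ via the degree, so \emph{every} degree-zero divisor on $\cE_{\eta}$ is algebraically equivalent to zero, whereas $\Pic^0(\cE_{\eta}) \cong \cE(\overline{k}(C))$ is exactly the group the theorem computes and is in general nonzero; taken literally, your auxiliary claim would force every point $P$ to satisfy $P - O \sim 0$, i.e.\ it would make the target group trivial. What you actually need is that divisors algebraically equivalent to zero on $\overline{X}$ die under $\Phi$, and this is precisely where the hypotheses on the fibration enter: for an elliptic surface with a section and at least one singular fiber one has $q(\overline{X}) = g(C)$, so over $C = \PP^1$ the group $\Pic^0(\overline{X})$ vanishes and $\NS(\overline{X}) = \Pic(\overline{X})$ (more generally $\Pic^0(\overline{X}) = \pi^*\Pic^0(C)$ consists of vertical classes, which restrict to zero on $\cE_{\eta}$). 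With that correction in place of your equivalence claim, the argument is complete and agrees with the proof of the result the paper cites.
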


\begin{proof}
    See \cite[Theorem 1.3]{shioda90}.
\end{proof}

The lattice $\Triv(\overline{X})$ is generated by $O$, the fiber class $F$, and every irreducible fiber component not intersecting $O$. Therefore, $\rank(\Triv(\overline{X})) = 2 + \sum_{v \in \PP^1}(m_v -1)$, where $m_v$ is the number of components of the fiber $F_v$. As a corollary, we have the following formula.

\begin{corollary}[The Shioda--Tate formula]
\label{shiodatate}
Let $r$ be the rank of $\cE(\overline{k}(T))$ and $m_v$ the number of components of the fiber $F_v$. We have
\begin{equation*}
    \rho(X) = r + 2 + \sum (m_v -1).
\end{equation*}
\end{corollary}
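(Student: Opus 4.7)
The plan is to extract the rank formula directly from the Shioda--Tate isomorphism in Theorem \ref{shioda-tate-isom}, combined with the explicit description of $\Triv(\overline{X})$ given in the paragraph immediately preceding the corollary.

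Concretely, I would first observe that rank is additive in short exact sequences of finitely generated abelian groups: from
\begin{equation*}
  0 \longrightarrow \Triv(\overline{X}) \longrightarrow \NS(\overline{X}) \longrightarrow \cE(\overline{k}(T)) \longrightarrow 0,
\end{equation*}
which exists by Theorem \ref{shioda-tate-isom}, one gets $\rho(X) = \rank \Triv(\overline{X}) + \rank \cE(\overline{k}(T)) = \rank \Triv(\overline{X}) + r$. The second step is to plug in the value $\rank \Triv(\overline{X}) = 2 + \sum_v (m_v-1)$ stated just above the corollary. Combining the two equalities yields the desired formula.

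The only non-formal ingredient is the value of $\rank \Triv(\overline{X})$ itself, which is the would-be obstacle if it were not already asserted in the paragraph above. Its proof amounts to checking that the proposed generators, namely the zero-section $(O)$, a general fiber class $F$, and the fiber components disjoint from $(O)$ inside every singular fiber, are linearly independent in $\NS(\overline{X})$. Independence of $(O)$ and $F$ follows from the intersection numbers $(O)\cdot F = 1$ and $F\cdot F = 0$; independence of the fiber components comes from the fact that the intersection matrix of the components of a Kodaira fiber, restricted to those components not meeting $(O)$, is the negative of the Cartan matrix of the associated $ADE$ root system and is therefore negative definite, hence non-degenerate. Finally, vertical divisors from different fibers are orthogonal to each other and to $(O)$ modulo $F$, so the generators from distinct fibers do not interact, giving the total contribution $\sum_v (m_v-1)$.
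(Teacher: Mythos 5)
Your argument is correct and is essentially the paper's own: the corollary follows immediately from the isomorphism $\NS(\overline{X})/\Triv(\overline{X}) \cong \cE(\overline{k}(T))$ of Theorem \ref{shioda-tate-isom} together with the rank computation $\rank(\Triv(\overline{X})) = 2 + \sum_v (m_v-1)$ asserted in the preceding paragraph. Your additional verification of that rank computation (via the negative-definiteness of the intersection matrix of fiber components away from the zero-section) is a sound justification of a step the paper simply quotes, not a different route.
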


 The following proposition shows a connection between linear relations of $\overline{k}(T)$-points in $\cE(\overline{k}(T))$ and the vertical divisors of $\pi \colon X \to \PP^1$.

\begin{prop}\label{silverman_vertical_divisors}
    Let $P_1, \ldots, P_m$ be $\overline{k}(T)$-points on the generic fiber $\cE$ of an elliptic surface $\pi \colon X \to \PP^1$, and let $n_1, \ldots, n_m \in \ZZ$ be integers such that
    \begin{equation*}
        [n_1]P_1 \oplus \cdots \oplus [n_m] P_m = O.    
    \end{equation*}
    Then for $n = n_1 + \cdots + n_m$, the divisor $n_1(P_1) + \cdots + n_m(P_m) - n(O) \in \NS(\overline{X})$ is vertical.
\end{prop}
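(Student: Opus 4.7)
The plan is to derive the proposition directly from the Shioda--Tate isomorphism of Theorem \ref{shioda-tate-isom}, together with one intersection-theoretic computation. Write $D \colonequals n_1(P_1) + \cdots + n_m(P_m) - n(O) \in \NS(\overline{X})$, and recall that under the isomorphism $\NS(\overline{X})/\Triv(\overline{X}) \cong \cE(\overline{k}(T))$, the class of a section $(P)$ maps to the corresponding point $P \in \cE(\overline{k}(T))$. Then the image of $D$ in the quotient is
$$[n_1]P_1 \oplus \cdots \oplus [n_m]P_m \oplus [-n]O,$$
which equals $O$ by hypothesis (since $[-n]O = O$). Hence $D \in \Triv(\overline{X})$, so I can write $D = V + a(O)$ for some vertical divisor $V$ and some integer $a$.

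It remains to show $a = 0$, so that $D = V$ is vertical. For this I would intersect with the class $F$ of a fiber of $\pi$. Every section meets $F$ transversally in a single point, so $(P_i) \cdot F = (O) \cdot F = 1$, while any vertical divisor satisfies $V \cdot F = 0$, since its components are contained in fibers and $F \cdot F = 0$. Comparing
$$D \cdot F = n_1 + \cdots + n_m - n = 0 \qquad \text{with} \qquad (V + a(O)) \cdot F = a$$
forces $a = 0$, which concludes the argument.

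I do not foresee a genuine obstacle: the whole proof is formal once Theorem \ref{shioda-tate-isom} and the elementary intersection numbers of sections and vertical divisors with a fiber are in hand. The one point that merits a careful sentence is the explicit description of the Shioda--Tate map on sections, namely $(P) \mapsto P$; this is what turns the hypothesized linear relation inside the group $\cE(\overline{k}(T))$ into the coset identity $D \in \Triv(\overline{X})$ that drives the rest of the argument.
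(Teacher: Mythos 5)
Your argument is correct, but it takes a different route from the paper: the paper does not prove this statement at all, it simply cites Silverman (Advanced Topics, Ch.~III, Prop.~9.2), whose proof is the more elementary one via linear equivalence --- the hypothesis $[n_1]P_1 \oplus \cdots \oplus [n_m]P_m = O$ says that $n_1(P_1) + \cdots + n_m(P_m) - n(O)$ restricts on the generic fiber to a principal divisor $\operatorname{div}(f)$ with $f \in \overline{k}(X)$, and then the divisor of $f$ on $X$ has horizontal part exactly $n_1(P_1) + \cdots + n_m(P_m) - n(O)$, so this class equals minus the vertical part of $\operatorname{div}_X(f)$ in $\NS(\overline{X})$. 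Your proof instead runs the Shioda--Tate isomorphism (Theorem \ref{shioda-tate-isom}) backwards: from $D \in \Triv(\overline{X})$ you write $D = V + a(O)$ with $V$ vertical, and then kill $a$ by intersecting with $F$, using $(P_i)\cdot F = (O)\cdot F = 1$ and $V \cdot F = 0$; this computation is sound and the conclusion follows. What your route buys is that it stays entirely inside results already quoted in the paper, at the modest cost of invoking something slightly stronger than the bare statement of Theorem \ref{shioda-tate-isom}: you need that the isomorphism is the specific one induced by restricting a divisor class to the generic fiber and summing under the group law, so that a section class $(P)$ maps to the point $P$ --- exactly the point you flagged, and it is indeed how Shioda's map is constructed, so the gap is only one of citation, not of substance. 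Silverman's argument, by contrast, is logically more primitive (it does not presuppose the Shioda--Tate kernel computation, of which this proposition is essentially an ingredient), which is presumably why the paper cites it directly; if you wanted to avoid any appearance of circularity you could phrase your proof as Silverman does, but as a derivation within this paper's framework your version is perfectly serviceable.
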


\begin{proof}
    See \cite[Chapter III, Proposition 9.2]{silverman2}.
\end{proof}

\subsection{Rational elliptic surfaces}

\begin{dfn}
    An elliptic surface $\pi \colon X \to \PP^1$ over $k$ is called a \textit{rational elliptic surface} if it is \textit{geometrically rational}, that is, if $\overline{X}$ is birational to $\PP^2_{\overline{k}}$.
\end{dfn}

Given two plane cubics $\cF$ and $\cG$, one can construct a rational elliptic surface by resolving the indeterminacy of the map $\varphi \colon \PP^2 \dashrightarrow \PP^1$, where $\varphi(P) = [\cF(P){:}\cG(P)]$. We do this by blowing up the nine scheme-theoretic points in the intersection $\cF \cap \cG$, thus obtaining a surface $X$ with an genus 1 fibration $\pi \colon X \rightarrow \PP^1$. The exceptional divisor of the last blow-up is a section of $\pi$, this we obtain a rational elliptic surface.

\begin{thm}\label{res_cubic_pencil}
Let $X$ be a rational elliptic surface defined over $\overline{k}$. Then, $X$ is birational (over $\overline{k}$) to the blow-up of $\PP^2$ at the base points of a pencil of cubics.
\end{thm}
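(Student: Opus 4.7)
The plan is to identify $-K_{\overline{X}}$ with the fiber class of $\pi$, use this to compute $K_{\overline{X}}^2 = 0$, and then contract $(-1)$-curves to realize $\overline{X}$ as a blow-up of $\PP^2$ at nine points. The pencil $|-K_{\overline{X}}| = |F|$ will then push down to the pencil of plane cubics vanishing at those nine points.

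First I would invoke the canonical bundle formula for a relatively minimal elliptic surface $\pi\colon \overline{X}\to \PP^1$ with a section: one has $K_{\overline{X}} \sim \pi^*(K_{\PP^1} + \mathcal{L})$ where $\deg \mathcal{L} = \chi(\cO_{\overline{X}})$. Since $\overline{X}$ is rational we have $\chi(\cO_{\overline{X}}) = 1$, so $\deg(K_{\PP^1} + \mathcal{L}) = -2 + 1 = -1$ and hence $K_{\overline{X}} \sim -F$ for a fiber class $F$. In particular $K_{\overline{X}}^2 = F^2 = 0$, and the fibration $\pi$ itself is given by the anticanonical linear system $|-K_{\overline{X}}|$.

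Next I would use the classification of smooth projective rational surfaces: $\overline{X}$ is obtained from a minimal rational surface (either $\PP^2$ or a Hirzebruch surface $\mathbb{F}_n$) by a sequence of blow-ups. If the minimal model is some $\mathbb{F}_n$, standard elementary transformations convert the birational morphism into one landing in $\PP^2$ (since the fibration $\pi$ has many $(-1)$-curves in its reducible fibers, we may always contract enough of them to reach $\PP^2$). Denoting the resulting birational morphism by $\phi\colon \overline{X} \to \PP^2$ with exceptional divisors $E_1,\dots,E_n$ and $H = \phi^*(\text{line})$, we have $K_{\overline{X}} = -3H + \sum_i E_i$; combining with $K_{\overline{X}}^2 = 0$ and $(-3H+\sum E_i)^2 = 9 - n$ forces $n = 9$.

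Finally I would translate the anticanonical pencil back to $\PP^2$. Writing $F = -K_{\overline{X}} = 3H - \sum_i E_i$, a general member of $|F|$ is the strict transform of a plane cubic passing through the points $P_i = \phi(E_i)$ with the appropriate (possibly infinitely near) multiplicities. Since $\dim |F| = 1$, these cubics form a pencil whose base locus is exactly $\{P_1,\dots,P_9\}$, and by construction the blow-up of $\PP^2$ at these base points is a surface birational to $\overline{X}$ carrying the same elliptic fibration. The main obstacle will be the second step: ensuring that one can choose the birational morphism to land in $\PP^2$ rather than in some $\mathbb{F}_n$ with $n \geq 2$, which requires a careful appeal to elementary transformations using the $(-1)$-curves guaranteed to exist inside the reducible fibers of $\pi$.
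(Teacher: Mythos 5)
The paper itself gives no argument for this statement---it simply cites Miranda, Lemma IV.1.2---and your outline is the standard proof that lies behind that citation: identify $-K_{\overline{X}}$ with the fiber class, deduce $K_{\overline{X}}^2=0$, descend to a minimal rational surface, and push the anticanonical pencil down to a pencil of cubics through nine (possibly infinitely near) base points. Your steps 1 and 3 are fine as sketched.

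The gap is in the step you yourself flag as the main obstacle, and the mechanism you propose for it rests on a false premise: a relatively minimal elliptic fibration has \emph{no} $(-1)$-curves among its fiber components (that is precisely the definition of relative minimality adopted in the paper; by Proposition \ref{results_RES} the components of reducible fibers of $\pi$ are $(-2)$-curves, and the $(-1)$-curves of $\overline{X}$ are the sections of $\pi$). So you cannot ``contract $(-1)$-curves inside the reducible fibers of $\pi$'' to steer the birational morphism into $\PP^2$; it is the conic bundle $\varphi$, not $\pi$, whose fibers contain $(-1)$-curves, and $\varphi$ plays no role here. The correct input at this point is again relative minimality together with $K_{\overline{X}}=-F$: by adjunction every irreducible curve of negative self-intersection on $\overline{X}$ is either a $(-1)$-curve (a section of $\pi$) or a $(-2)$-curve (a fiber component), so $\overline{X}$ carries no curve of self-intersection $\leq -3$. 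Hence a birational morphism $\overline{X}\to\mathbb{F}_n$ can exist only for $n\leq 2$ (otherwise the strict transform of the negative section would have self-intersection $\leq -n\leq -3$), and for $n=2$ none of the blown-up centers lies over the negative section; an elementary transformation centred at one of the honest blown-up points (or, for $\mathbb{F}_0$, re-choosing one contraction) then yields a morphism to $\mathbb{F}_1$ and hence to $\PP^2$, after which your count $K^2=9-n$ forces nine exceptional classes and step 3 proceeds as you describe. With that substitution the plan is correct (and in fact yields the stronger isomorphism statement of Miranda), but as written the justification of the crucial step would fail.
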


\begin{proof}
See \cite[Lemma IV.1.2]{miranda}.
\end{proof}

As a consequence of this construction, we obtain some geometric properties of any rational elliptic surface.

\begin{prop}\label{results_RES}
Let $\pi \colon X \to \PP^1$ be a rational elliptic surface defined over $\overline{k}$. Further, let $e(\overline{X})$ denote the Euler number of $\overline{X}$ and $K_X$ the canonical divisor of $X$. The following hold.
\begin{itemize}
    \item[i)] $\rho(\overline{X}) = 10$.
    \item[ii)] $e(\overline{X}) = 12$.
    \item[iii)] $K_X = -F$.
    \item[iv)] A rational curve $C \subset X$ is a section of $\pi$ if and only if $C^2 = -1$.
    \item[v)] A rational curve $C \subset X$ is a fiber component of $\pi$ if and only if $C^2 = -2$.
\end{itemize}
\end{prop}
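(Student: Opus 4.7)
The plan is to pull everything back to the model $X$ of Theorem \ref{res_cubic_pencil}: realize $\overline{X}$ as the blow-up $\beta \colon \overline{X} \to \PP^2_{\overline{k}}$ at the nine (possibly infinitely near) base points of a pencil of cubics $\cF, \cG$, with exceptional divisors $E_1, \ldots, E_9$, and work in the orthogonal basis $H, E_1, \ldots, E_9$ of $\NS(\overline{X})$, where $H = \beta^* H_{\PP^2}$, $H^2 = 1$, $E_i^2 = -1$, $H \cdot E_i = 0$, $E_i \cdot E_j = 0$ for $i \neq j$. Parts (i) and (ii) are then immediate: each of the nine blow-ups raises the Picard rank by $1$ from $\rho(\PP^2) = 1$, giving $\rho(\overline{X}) = 10$, and each raises the topological Euler number by $1$ from $e(\PP^2) = 3$, giving $e(\overline{X}) = 12$.

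For (iii), I would use the standard formula for the canonical class of a blow-up: $K_{\overline{X}} = \beta^* K_{\PP^2} + E_1 + \cdots + E_9 = -3H + E_1 + \cdots + E_9$. A general fiber $F$ of $\pi$ is the strict transform of a member of the pencil, so its class is $3H - E_1 - \cdots - E_9$; therefore $K_{\overline{X}} = -F$. Since $K_X$ is Galois invariant and agrees with $K_{\overline{X}}$ after base change, the formula descends to $X$.

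For (iv) and (v), the key identity is the adjunction formula combined with $K_X = -F$: for any irreducible curve $C \subset \overline{X}$,
\begin{equation*}
    C^2 + C \cdot K_X = 2g(C) - 2, \qquad \text{hence} \qquad C^2 - C \cdot F = -2 \text{ if $C$ is rational.}
\end{equation*}
If $C$ is a section, then $C \cdot F = 1$ and we get $C^2 = -1$. Conversely, if $C$ is a rational curve with $C^2 = -1$, the identity forces $C \cdot F = 1$, which means $C$ meets the generic fiber in a single point and is therefore a section of $\pi$. Likewise, if $C$ is a fiber component, then $C$ is rational (by the Kodaira--Néron classification of singular fibers, and because a smooth fiber, being irreducible, has self-intersection $0$ and so is not the kind of curve we consider here), and $C \cdot F = 0$ because $F$ is numerically equivalent to any fiber and $C$ is supported in one fiber, so $C^2 = -2$. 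Conversely, if $C$ is rational with $C^2 = -2$, then $C \cdot F = 0$; an irreducible curve with zero intersection against a fiber class must be vertical (a horizontal curve would have strictly positive intersection with $F$), so $C$ is contained in a single fiber of $\pi$.

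The only subtle point — and the part I expect to need the most care — is the converse direction in (v): ensuring that $C \cdot F = 0$ really does force $C$ to be vertical rather than merely numerically orthogonal to $F$. This is handled by noting that $F$ is nef (it is a fiber of a morphism to $\PP^1$), so $C \cdot F = 0$ together with $C$ irreducible implies that the image $\pi(C)$ is a point; equivalently, if $C$ were not contained in a fiber, then $\pi|_C \colon C \to \PP^1$ would be surjective of some positive degree $d$, giving $C \cdot F = d > 0$, a contradiction. Once this is in place, (iv) and (v) both follow cleanly, and the proof is complete.
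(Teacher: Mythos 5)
Your proposal is correct, and for items (iv) and (v) it is exactly the paper's route: the paper also deduces these from the Adjunction Formula once $K_X=-F$ is known. The difference is in (i)--(iii), where the paper simply cites \cite{schutt-shioda} (Propositions 7.5 and 5.28), whereas you derive them directly from the blow-up model of Theorem \ref{res_cubic_pencil}: $\rho$ and $e$ each increase by one under each of the nine blow-ups, and $K_{\overline{X}}=-3H+E_1+\cdots+E_9$ is the negative of the class of the strict transform of a general member of the cubic pencil, i.e.\ $-F$. This is a legitimate and more self-contained argument (indeed it is essentially how the cited references prove these facts), but note two small points of care. First, since $\rho$, $e$ and the fiber class are not birational invariants, you need the strong form of Theorem \ref{res_cubic_pencil} -- that a relatively minimal rational elliptic surface is \emph{isomorphic} to the nine-fold blow-up of $\PP^2_{\overline{k}}$ and that $\pi$ is the fibration induced by the pencil -- not merely ``birational to'' as literally stated; relative minimality is what rules out extra blow-ups. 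Second, in the converse directions of (iv) and (v) you apply adjunction with $g(C)=0$, which presupposes $C$ smooth; the clean way is to use the arithmetic genus, $C^2+C\cdot K_X=2p_a(C)-2$, together with nefness of $F$: for $C^2=-1$ this forces $p_a(C)=0$ and $C\cdot F=1$, and for $C^2=-2$ it forces $p_a(C)=0$ and $C\cdot F=0$, after which your argument that $C\cdot F=0$ implies $C$ is vertical is exactly right. (As stated, the forward direction of (v) tacitly excludes the irreducible singular fibers of type $I_1$ and $II$, which are rational with self-intersection $0$; this imprecision is in the proposition itself and the paper glosses over it in the same way, so it is not a defect of your argument.)
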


\begin{proof}
See \cite[Proposition 7.5]{schutt-shioda} for items (i) and (ii), \cite[Proposition 5.28]{schutt-shioda} for item (iii). Items (iv) and (v) are a consequence of the Adjunction Formula (see \cite[Theorem I.15]{beauville}). 
\end{proof}

Notice that (i) in Proposition \ref{results_RES} simplifies the Shioda--Tate formula (Corollary \ref{shiodatate}) when $X$ is a rational elliptic surface, as stated by the following corollary.

\begin{corollary}\label{rational_shioda-tate}
    Let $\pi \colon X \to \PP^1$ be a rational elliptic surface and $r$ the rank of its generic fiber $\cE$ over $\overline{k}(T)$. Then,
    \begin{equation*}
        r = 8 - \sum_{v \in \PP^1} (m_v -1).
    \end{equation*}
\end{corollary}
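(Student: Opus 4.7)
The plan is simply to combine the two ingredients already established in the preceding subsection. Corollary \ref{shiodatate} (the Shioda--Tate formula) gives the identity
\begin{equation*}
    \rho(\overline{X}) = r + 2 + \sum_{v \in \PP^1}(m_v - 1),
\end{equation*}
valid for any elliptic surface $\pi \colon X \to \PP^1$. On the other hand, Proposition \ref{results_RES}(i) tells us that when $\pi \colon X \to \PP^1$ is a rational elliptic surface, the Picard number $\rho(\overline{X})$ is exactly $10$.

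So the proof consists of substituting $\rho(\overline{X}) = 10$ into the Shioda--Tate formula and solving for $r$, obtaining
\begin{equation*}
    r = 10 - 2 - \sum_{v \in \PP^1}(m_v - 1) = 8 - \sum_{v \in \PP^1}(m_v - 1).
\end{equation*}

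There is no real obstacle here: all the work has been done in the statements already cited. The only thing worth remarking is that both the Shioda--Tate formula and the computation of $\rho(\overline{X}) = 10$ are geometric statements about $\overline{X}$, so it is crucial that $r$ is taken to be the rank of $\cE(\overline{k}(T))$ (not the rank over $k(T)$); this matches the hypothesis of the corollary.
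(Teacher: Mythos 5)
Your proof is correct and is exactly the argument the paper intends: the corollary is stated immediately after the remark that Proposition \ref{results_RES}(i) (namely $\rho(\overline{X})=10$) simplifies the Shioda--Tate formula (Corollary \ref{shiodatate}), and substituting that value gives the result. Your closing remark about $r$ being the rank over $\overline{k}(T)$ is also consistent with the paper's setup.
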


\subsection{Conic bundles}

\begin{dfn}
Let $X$ be a surface over $k$. A \textit{conic bundle} with rational base of $X$ over $k$ is a surjective morphism $\varphi \colon X \to \PP^1$ defined over $k$, such that all but finitely many fibers are irreducible curves of genus $0$. Furthermore, if every reducible fiber is given by two distinct rational curves intersecting in a single point, we say that $\varphi$ is \textit{standard}.
\end{dfn}

As we are interested solely on rational surfaces, all conic bundles have rational base. We will refer to them in what follows simply as conic bundles, omitting the rational base.

\begin{thm}\label{standard_conic_bundles}
Let $X$ be a rational surface and $\varphi \colon X \to \PP^1$ a standard conic bundle of degree $d = K_X^2$. Then, the following hold. 

\begin{itemize}
\item[i)] There are $r' = 8-d$ reducible fibers of $\varphi$, all of which are composed of two concurrent exceptional curves.
\item[ii)] There is a free basis of $\NS(\overline{X})$ given by $\langle G, H, \ell_1, {.}{.}{.}, \ell_{r'}\rangle$, where $G$ is the fiber class of $\varphi$, $H$ is a section of $\varphi$ and $\ell_1,...,\ell_{r'}$ are the components of the reducible fibers of $\varphi$ not intersecting $H$.
\item[iii)]The canonical divisor of $X$ is given by
\begin{center}
    $K_X = -2H + (H^2-2)G + \sum_{i=1}^{r'} \ell_i$.
\end{center}
\end{itemize}
\end{thm}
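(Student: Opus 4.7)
The plan is to derive all three statements from Noether's formula applied to the rational surface $\overline{X}$,
\[
K_X^2 + e(\overline{X}) \;=\; 12\chi(\cO_{\overline{X}}) \;=\; 12,
\]
which yields $e(\overline{X}) = 12 - d$, combined with elementary intersection-theoretic computations on the classes $G, H, \ell_i$.

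For (i), a smooth conic fiber is a $\PP^1$ with Euler number $2$, while a reducible fiber, being two rational curves meeting transversally in one point, has Euler number $3$. The multiplicativity of Euler numbers for fibrations thus gives $e(\overline{X}) = e(\PP^1)\cdot 2 + r'\cdot(3-2) = 4 + r'$, so $r' = 8 - d$. For each reducible fiber, writing $G = \ell_i + \ell_i'$ with $\ell_i \cdot \ell_i' = 1$ and computing $\ell_i^2 = \ell_i\cdot G - \ell_i\cdot \ell_i' = 0 - 1 = -1$ shows that $\ell_i$, being rational on a smooth surface, is an exceptional curve.

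For (ii), the rank of $\NS(\overline{X})$ equals $b_2(\overline{X}) = e(\overline{X}) - 2 = 10 - d = r' + 2$, matching the number of listed classes. It therefore suffices to verify unimodularity: I compute the intersection pairing using $G^2 = 0$, $G\cdot H = 1$, $G\cdot \ell_i = 0$ (since $\ell_i$ is vertical), $H\cdot \ell_i = 0$ (by the choice that the $\ell_i$ are the components of the singular fibers \emph{disjoint} from $H$), $\ell_i^2 = -1$, and $\ell_i\cdot \ell_j = 0$ for $i\neq j$. The resulting block matrix has determinant $(-1)^{r'+1}$, a unit in $\ZZ$, so $\langle G, H, \ell_1, \ldots, \ell_{r'}\rangle$ is a $\ZZ$-basis of $\NS(\overline{X})$.

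For (iii), I expand $K_X = aG + bH + \sum_i c_i \ell_i$ in this basis and solve for the coefficients via adjunction. Intersecting with $G$ yields $K_X\cdot G = -2$ (from $G\cong \PP^1$ and $G^2 = 0$), giving $b = -2$. Intersecting with $\ell_i$ yields $K_X\cdot \ell_i = -1$ (adjunction on a $(-1)$-curve), giving $c_i = 1$. Finally, since $H$ is a section and hence rational, adjunction gives $K_X\cdot H = -2 - H^2$; combined with $K_X\cdot H = a - 2H^2$ read from the expansion, this yields $a = H^2 - 2$. The only nontrivial step is the determinant bookkeeping in (ii); once unimodularity is checked, (i) and (iii) are short applications of Euler number counting and adjunction.
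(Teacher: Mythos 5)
Your argument is correct, and it is worth noting that the paper does not prove this statement at all: it is quoted from Tsfasman \cite[Proposition 0.4]{tsfasman}, so your proposal supplies a self-contained proof where the paper relies on a citation. The skeleton is sound: Noether's formula $K_X^2+e(\overline{X})=12\chi(\cO_{\overline{X}})=12$ plus the Euler-number formula for the fibration gives (i); the rank count $\rho(\overline{X})=b_2(\overline{X})=10-d=r'+2$ together with the unimodular Gram matrix of $\langle G,H,\ell_1,\ldots,\ell_{r'}\rangle$ gives (ii); and expanding $K_X$ in that basis and solving with adjunction gives (iii), with all the individual intersection numbers ($\ell_i^2=-1$, $K_X\cdot G=-2$, $K_X\cdot H=-2-H^2$, etc.) computed correctly. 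Two small points deserve a sentence each to make this airtight. First, in (i) you tacitly assume that \emph{every} singular fiber is one of the reducible two-component fibers with Euler number $3$; this is true but needs the observation that a fiber has arithmetic genus $0$, so an irreducible reduced singular fiber is impossible ($p_a=p_g+\sum\delta$), and a non-reduced fiber supported on one curve, $G=2\ell$, is ruled out by adjunction parity ($K_X\cdot\ell=-1$, $\ell^2=0$ would force $p_a(\ell)\notin\ZZ$), while non-reduced fibers with reducible support are excluded by the standardness hypothesis. Second, in (ii) the step from ``unimodular Gram matrix'' to ``$\ZZ$-basis'' uses that $\NS(\overline{X})$ is torsion free for a rational surface and the relation $\disc(L')=[L:L']^2\disc(L)$ for a finite-index sublattice; both are standard but should be invoked explicitly. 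With those additions your proof is complete and, unlike the paper, does not outsource the statement.
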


\begin{proof}
    See \cite[Proposition 0.4]{tsfasman}.
\end{proof}

Let $X$ be a rational elliptic surface. Conic bundles on rational elliptic surfaces were studied in \cite{AGL} and \cite{renato}. The following result gives a classification of the singular fibers of a more general, not necessarily standard, conic bundle over $X$.

\begin{thm}\label{CBfibers}
Let $\pi \colon X \to \PP^1$ be a rational elliptic surface with fiber class $F$ and $\varphi \colon X \to \PP^1$ a conic bundle with fiber class $G$. Then, $G^2 = 0$ and $F \cdot G = 2$. Furthermore, every fiber of $\varphi$ is of one of the types in Table \ref{CBfibers_table}.
\end{thm}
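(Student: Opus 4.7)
The plan is first to compute the intersection numbers $G^2$ and $F \cdot G$, and then to deduce the fiber classification from adjunction combined with Zariski's lemma. For $G^2 = 0$: any two fibers of $\varphi$ are disjoint (they project to distinct points of $\PP^1$) and linearly equivalent, so $G^2 = G \cdot G' = 0$ for distinct fibers $G, G'$. For $F \cdot G = 2$: since arithmetic genus is constant in a flat family and a generic fiber of $\varphi$ is a smooth rational curve, every fiber $G$ has arithmetic genus $0$. Adjunction then gives $-2 = G \cdot (G + K_X) = G \cdot K_X$, and substituting $K_X = -F$ from Proposition \ref{results_RES}(iii) yields $F \cdot G = 2$.

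Next I would analyze an arbitrary irreducible component $C$ of a reducible fiber of $\varphi$ via adjunction. With $K_X = -F$ the formula reads $2 p_a(C) - 2 = C^2 - C \cdot F$. Since $C$ lies inside a fiber and fibers of $\varphi$ are connected, Zariski's lemma applied to $\varphi$ shows that the intersection pairing on components of a fiber is negative semi-definite, with kernel generated by $G$ itself; in particular $C^2 \leq 0$, with equality only when $C$ is a positive multiple of the whole fiber. Combined with $p_a(C) \geq 0$ and $C \cdot F \geq 0$, the only possibilities for a component of a reducible fiber are a smooth rational curve with either $(C^2, C \cdot F) = (-2, 0)$ or $(C^2, C \cdot F) = (-1, 1)$.

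Writing a reducible fiber as $\sum m_i C_i = G$ and intersecting with $F$ gives $\sum m_i (C_i \cdot F) = 2$, where only the $(-1)$-components contribute. Thus either a single $(-1)$-component appears with multiplicity $2$ (the $D_n$ case, matching the $2$-torsion description of the single $(-1)$-component in the introduction) or two distinct $(-1)$-components each appear with multiplicity $1$ (the $A_n$ case, matching the pair of $(-1)$-components whose induced points are mutually inverse).

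The hard part will be pinning down the precise dual graphs of Table \ref{CBfibers_table}. The arguments above identify the $(-1)$-components, but the $(-2)$-components form an additional configuration whose sub-intersection matrix must be negative semi-definite, whose total dual graph must be connected, and which together with the $(-1)$-components must satisfy the numerical relations imposed by $G^2 = 0$ and $F \cdot G = 2$. Combining these constraints, and using that removing one of the $(-1)$-components must leave a negative-definite configuration, should force the $(-2)$-components to arrange into linear chains attached to the $(-1)$-components in the prescribed $A_n$ and $D_n$ Dynkin-type shapes. Matching the finer data (branch multiplicities in $D_n$, chain lengths, the multiplicity-$2$ middle components) will be handled by a finite case analysis via Zariski's lemma rather than by a single uniform argument.
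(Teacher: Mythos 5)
Your proposal takes a genuinely different route from the paper: the paper does not prove this statement at all, but simply cites \cite{renato} (Theorems 3.8 and 4.2), where the classification of conic bundle fibers on rational elliptic surfaces is carried out. Your first three paragraphs are correct and are essentially the standard ingredients of that classification: $G^2=0$ from disjointness of fibers, $F\cdot G=2$ from $p_a(G)=0$ and $K_X=-F$ (Proposition \ref{results_RES}), and then adjunction plus Zariski's lemma forcing every component of a reducible fiber to be a smooth rational curve with $(C^2,C\cdot F)=(-2,0)$ or $(-1,1)$, so that intersecting $G=\sum m_iC_i$ with $F$ leaves exactly the two cases ``one $(-1)$-component of multiplicity $2$'' and ``two $(-1)$-components of multiplicity $1$''. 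This is more self-contained than the paper, at the cost of redoing work the paper outsources.

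The weak point is your last paragraph: asserting that the constraints ``should force'' the $A_n$ and $D_n$ dual graphs is precisely the content of the cited theorem, and as written it is a gap rather than a proof. It can, however, be closed cheaply, and more directly than by the negative-(semi)definiteness bookkeeping you suggest. Since $p_a(G_v)=1+\tfrac12\,G_v\cdot(G_v+K_X)=0$, the fiber is a tree of smooth rational curves meeting transversally; and for each component $C_i$ the relation $G_v\cdot C_i=0$ reads $\sum_{j\neq i}m_j(C_i\cdot C_j)=m_i$ if $C_i^2=-1$ and $=2m_i$ if $C_i^2=-2$. Starting from a $(-1)$-component and propagating these equalities through the tree gives a short induction: in the two-$(-1)$-curve case every intermediate $(-2)$-curve has multiplicity $1$ and exactly two neighbours, so the fiber is a chain of type $A_n$; in the multiplicity-two case the $(-1)$-curve is followed by a chain of multiplicity-$2$ $(-2)$-curves which can only terminate in two reduced $(-2)$-leaves, giving $D_n$ (with $D_3$ the degenerate case of an empty chain). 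One should also rule out irreducible non-reduced fibers $mC$, which your sketch does not mention: $m(C\cdot F)=2$ and $C^2=0$ contradict adjunction unless $m=1$. With these points filled in, your argument is a complete and self-contained proof of what the paper only quotes.
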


\begin{proof}
See \cite[Theorem 3.8, Theorem 4.2]{renato}.
\end{proof}

\begin{table}[h]
    \centering
    \begin{tabular}{|c|c|}
        \hline
        & \\[-1em]
        Type & Intersection Graph\\
        & \\[-1em]
        \hline
        & \\[-1em]
        $0$ & $*$\\
        & \\[-1em]
        \hline
        & \\[-1em]
        $A_2$ & \tikzfig{conicfiberA2}\\
        & \\[-1em]
        \hline
        & \\[-1em]
        $A_n \ (n \geq 3)$ & \tikzfig{conicfiberAn}\\
        & \\[-1em]
        \hline
        & \\[-1em]
        $D_3$ & \tikzfig{conicfiberD3}\\
        & \\[-1em]
        \hline
        & \\[-1em]
        $D_n \ (n \geq 4)$ & \tikzfig{conicfiberDn}\\
        & \\[-1em]
        \hline
    \end{tabular}\\
    \begin{minipage}{0.5\textwidth}
      \begin{itemize}
        \item[$*$]\tiny smooth, irreducible curve of genus zero 
        \item[\tikzfig{whitedot}]\tiny $(-1)$-curve (section of $\pi$)
        \item[\tikzfig{blackdot}]\tiny $(-2)$-curve (component of a reducible fiber of $\pi$)
      \end{itemize}
    \end{minipage}
    \caption{fibers in conic bundles over rational elliptic surfaces}
    \label{CBfibers_table}
\end{table}

\begin{remark}\label{standard_is_A2}
    By the classification in Table \ref{CBfibers_table}, we can define a stardard conic bundle as a conic bundle in which every reducible fiber is of type $A_2$.
\end{remark}

\section{Conic bundles on rational elliptic surfaces}\label{section_CB_and_RES}

\subsection{Generalities on conic bundles}\label{section:generalities}

Let $\pi \colon X \to \PP^1$ be a rational elliptic surface, and $\varphi \colon X \to \PP^1$ a conic bundle. By Theorem \ref{CBfibers}, every reducible fiber of $\varphi$ is of type $A_n$, with $n \geq 2$, or $D_n$, with $n \geq 3$. Let $\delta(\varphi)$ be the number of fibers of type $A_n$, and $\varepsilon(\varphi)$ the number of fibers of type $D_n$. Notice that these numbers depend on the conic bundle $\varphi$; a rational elliptic surface may be endowed with two different conic bundles $\varphi_1$ and $\varphi_2$ such that $\delta(\varphi_1) \neq \delta(\varphi_2)$. Through the rest of this section, we fix one conic bundle $\varphi$ and refer to $\delta(\varphi), \varepsilon(\varphi)$ as $\delta, \varepsilon$, respectively.
Next, we establish further notation for the fibers of $\varphi$.

\begin{notation}\label{notation_CB_fibers}
    A fiber $\varphi^{-1}(v)$ is denoted by $G_v$, its number of components by $n_v$ and its class in $\NS X$ by $G$. 

    Denote the fibers of type $A_n$ by $G_{v_1}, \ldots, G_{v_{\delta}}$. We write

\begin{equation}\label{fiber_type_An}
    G_{v_i} = \sum_{j=0}^{n_{v_i}\text{-}1}\alpha_{i,j}\text{.}
\end{equation}

The components in Equation \ref{fiber_type_An} intersect following the graph of fibers of type $A_n$ in Table \ref{CBfibers_table}, with $\alpha_{v_i,0}^2 = \alpha_{v_i,n_{v_i}\text{-}1}^2 = -1$ and $\alpha_{v_i,j}^2 = -2$ for ${j = 1, \ldots, n_{v_i}-2}$.

Denote the fibers of type $D_n$ by $G_{w_1}, \ldots, G_{w_{\varepsilon}}$. We write

\begin{equation}\label{fiber_type_Dn}
    G_{w_i} = \beta_{w_i,0} + \beta_{w_i,1} + 2\sum_{j=2}^{n_{w_i}\text{-}1} \beta_{w_i,j}\text{.}
\end{equation}

Similarly, the components in Equation \ref{fiber_type_Dn} intersect following the graph of fibers of type $D_n$ in Table \ref{CBfibers_table}, with $\beta_{w_i,n_{w_i}\text{-}1}^2 = -1$ and $\beta_{w_i,j}^2 = -2$ for $j = 0, \ldots, n_{w_i} - 2$.
\end{notation}

Let $G_v$ be a reducible fiber of $\varphi$. Then $e(G_v) = n_{v}+1$ independently on the type of $G_v$, since the fiber is composed of $n_v$ rational curves intersecting at $n_{v}-1$ distinct points. We can use this fact to limit the possible configurations of fibers.

\begin{prop}\label{conic_bundle_configuration}
    For $\varphi \colon X \to \PP^1$ a conic bundle over a rational elliptic surface, we have the following formula.
    
    \begin{equation*}
        \sum_{v \in \PP^1} (n_v - 1) = 8.
    \end{equation*}
    
\end{prop}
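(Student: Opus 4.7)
The plan is to compute the topological Euler characteristic of $X$ in two ways and equate them. Since $\pi \colon X \to \PP^1$ is a rational elliptic surface, Proposition \ref{results_RES}(ii) gives $e(\overline X) = 12$. On the other hand, $\varphi \colon X \to \PP^1$ is a proper morphism whose generic fiber is a smooth irreducible curve of genus $0$, so $e(\text{generic fiber}) = e(\PP^1) = 2$. By the standard multiplicativity of the Euler characteristic for a proper morphism with generically constant fiber Euler number, one has
\[
e(\overline X) \;=\; e(\PP^1) \cdot e(\text{generic fiber}) \;+\; \sum_{v \in \PP^1}\bigl(e(G_v) - 2\bigr),
\]
where the sum is effectively over the finitely many fibers differing from the generic one.

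Next I would use the observation made just before the statement: for every reducible fiber $G_v$ of $\varphi$—regardless of whether it is of type $A_n$ or $D_n$ from Table \ref{CBfibers_table}—one has $e(G_v) = n_v + 1$, since $G_v$ is a union of $n_v$ rational curves meeting transversally at $n_v - 1$ points. This formula also holds trivially for smooth irreducible fibers, where $n_v = 1$ and $e(G_v) = 2$, so the terms coming from smooth fibers vanish and the equality above is consistent.

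Substituting $e(\overline X) = 12$, $e(\PP^1) = 2$, and $e(G_v) = n_v + 1$ yields
\[
12 \;=\; 2 \cdot 2 + \sum_{v \in \PP^1}\bigl((n_v + 1) - 2\bigr) \;=\; 4 + \sum_{v \in \PP^1}(n_v - 1),
\]
which rearranges to the desired identity $\sum_{v \in \PP^1}(n_v - 1) = 8$.

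There is no real obstacle here beyond invoking the Euler characteristic formula correctly; the only point to verify carefully is that the formula $e(G_v) = n_v + 1$ genuinely holds uniformly across all fiber types appearing in Table \ref{CBfibers_table}, which follows directly from inspection of the intersection graphs (each is a tree of $n_v$ smooth rational components glued at $n_v - 1$ nodes).
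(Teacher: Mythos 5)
Your proposal is correct and follows essentially the same route as the paper: both compute $e(\overline X)=12$ via Proposition \ref{results_RES} and apply the Euler-number formula for the fibration $\varphi$ (the paper cites \cite[Proposition 5.1.6]{cossec-dolgachev}), together with $e(G_v)=n_v+1$, to obtain $\sum_{v}(n_v-1)=8$. No meaningful differences to note.
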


\begin{proof}
    By \cite[Proposition 5.1.6]{cossec-dolgachev}, the Euler number of $X$ is given by
    \begin{align*}
        e(\overline{X}) = e(G_{\eta})e(\PP^1) + \sum_{v \in \PP^1}(e(G_v) - e(G_{\eta})),
    \end{align*}
    where $G_{\eta}$ is the generic fiber of $\varphi$. By Proposition \ref{results_RES}, $e(\overline{X}) = 12$. Substituting $e(G_{\eta}) = e(\PP^1) = 2$ and $e(G_v) = n_v + 1$, we obtain the result.
\end{proof}

Let $H \subset X$ be a section of $\varphi$. Then, $H \cdot G_v = 1$, so $H$ intersects a single simple component of $G_v$. For the fibers $G_{w_i}$ of type $D_n$, $H$ can only intersect $\beta_{w_i,0}$ or $\beta_{w_i,1}$, and we can assume without loss of generality that it intersects $\beta_{w_i,0}$. On the other hand, for the fibers $G_{v_i}$ of type $A_n$, $H$ can intersect any component. Let $k_i$ be the number such that $H$ intersects $\alpha_{v_i,k_i}$. We can assume without loss of generality that $0 \leq k_i \leq n_{v_i} - 2$.

\begin{prop}\label{blow-down_to_standard}
    Let $X$ be a rational elliptic surface and $\varphi \colon X \to \PP^1$ a conic bundle on $X$ with a section $H \subset X$. Then, there is a contraction $\eta \colon X \to X_0$ such that
    \begin{itemize}
        \item [i)] $H$ does not intersect any of the curves contracted by $\eta$;
        \item[ii)] there is a standard conic bundle $\varphi_0 \colon X_0 \to \PP^1$ such that $\varphi = \varphi_0 \circ \eta$.
    \end{itemize}
    In other words, $\eta(H)$ is a section of $\varphi_0 \colon X_0 \to \PP^1$.
\end{prop}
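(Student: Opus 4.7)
The plan is to construct $\eta$ as a finite composition of blow-downs of $(-1)$-curves, each contained in a reducible fiber of $\varphi$ and disjoint from $H$, until every reducible fiber of the resulting conic bundle is of type $A_2$. By Remark~\ref{standard_is_A2}, this will give the required factorization $\varphi=\varphi_0\circ\eta$ with $\varphi_0$ standard, and because $H$ meets none of the contracted curves, $\eta(H)$ will be a section of $\varphi_0$.

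I would treat each reducible fiber separately. For a fiber $G_{w_i}$ of type $D_n$, the convention that $H$ meets $\beta_{w_i,0}$ ensures $H\cdot\beta_{w_i,n-1}=0$, so I would contract the $(-1)$-end $\beta_{w_i,n-1}$. A direct intersection calculation using Equation \ref{fiber_type_Dn} shows that this raises $\beta_{w_i,n-2}^2$ from $-2$ to $-1$ and produces a fiber of type $D_{n-1}$ if $n\geq 4$, while in the base case $n=3$ the remaining components $\beta_{w_i,0}$ and $\beta_{w_i,1}$ become two $(-1)$-curves meeting transversally in one point, i.e.\ an $A_2$ fiber. Iterating reduces every $D_n$ to $A_2$. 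For a fiber $G_{v_i}$ of type $A_n$ ($n\geq 3$), the hypothesis $0\leq k_i\leq n_{v_i}-2$ means $H$ is disjoint from the $(-1)$-end $\alpha_{v_i,n_{v_i}-1}$; contracting it yields an $A_{n_{v_i}-1}$ fiber in which $\alpha_{v_i,n_{v_i}-2}$ becomes the new $(-1)$-end. I would keep contracting the right end as long as it is disjoint from $H$; once $H$ meets the current right end I would instead contract from the left, using that $\alpha_{v_i,0}$ is a $(-1)$-end disjoint from $H$ whenever $k_i\geq 1$. Terminating when only two components remain gives an $A_2$ fiber.

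Composing all of these blow-downs yields $\eta\colon X\to X_0$. Since every contracted curve sits inside a fiber of $\varphi$, the morphism $\varphi$ descends uniquely to $\varphi_0\colon X_0\to\PP^1$ with $\varphi=\varphi_0\circ\eta$; its generic fiber is unchanged and every reducible fiber is of type $A_2$, so $\varphi_0$ is standard. Finally, $\eta|_H$ is an isomorphism onto $\eta(H)$ (because $H$ is not contracted), and $\varphi|_H\colon H\to\PP^1$ is an isomorphism, so $\varphi_0$ restricted to $\eta(H)$ is also an isomorphism, i.e., $\eta(H)$ is a section of $\varphi_0$. The main obstacle is the combinatorial bookkeeping in the $A_n$ case: one must check that the alternating contractions terminate in a two-component fiber without ever reaching a step where both ends of the chain are met by $H$. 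This is exactly where the bound $k_i\leq n_{v_i}-2$ is essential, and it is handled by a short induction on the length of the chain.
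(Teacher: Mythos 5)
Your proposal is correct and follows essentially the same route as the paper: contract $(-1)$-components of the reducible fibers of $\varphi$ that are disjoint from $H$, reducing each $D_n$ and $A_n$ fiber step by step to type $A_2$, and conclude via Remark \ref{standard_is_A2} after noting that $\varphi$ descends through each blow-down. The only (immaterial) difference is the bookkeeping in the $A_n$ case: the paper contracts $k_i$ components from the left end and $n_{v_i}-k_i-2$ from the right so that $\alpha_{v_i,k_i}$ and $\alpha_{v_i,k_i+1}$ survive, whereas your order may leave a different adjacent pair, which does not affect the statement.
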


\begin{proof}
    Let $E$ be a $(-1)$-component of a fiber of $\varphi$. Then, the pushforward of $G$ by the blow-down of $E$ induces a conic bundle commuting with the blow-down map.
    
    Let $G_{v_i}$ be a fiber of type $A_n$ such that $n_{v_i} \geq 3$. If we blow-down one of the $(-1)$-components of $G_{v_i}$, the $(-2)$-component intersecting it becomes a $(-1)$-component on the contracted surface. Thus, we can repeat this process successively. We need to contract $n_{v_i} - 2$ components that do not intersect the section $H$. Since we assume that $H$ intersects $\alpha_{v_i,k_i}$, we can do this by blowing-down a chain of $k_i$ components starting with $\alpha_{v_i,0}$ and a chain of $n_{v_i}-k_{i}-2$ components starting with $\alpha_{v_i,n_{v_i}\text{-}1}$. This iterative process yields a fiber of type $A_2$ (see Figure \ref{blowdown_to_standard_An}).

    \begin{figure}[h]
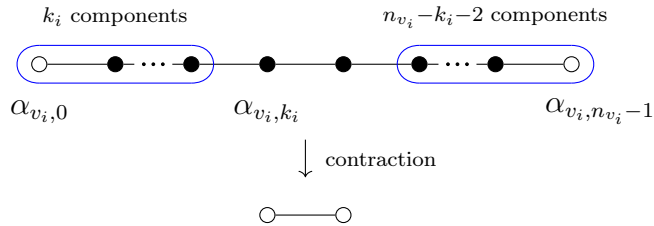

        \centering
        \tikzfig{blowdown_to_standard_An}
        \caption{Blowing-down a fiber of type $A_n$ to a fiber of type $A_2$}
        \label{blowdown_to_standard_An}
    \end{figure}

    Let $G_v$ be a fiber of type $D_n$, with $n \geq 3$. Since we assume $H$ intersects the component $\beta_{w_i,0}$, we can similarly blow-down a chain of $n_{w_i}-2$ components starting with $\beta_{w_i,n_{w_i}\text{-}1}$, reaching a fiber of type $A_2$ (see Figure \ref{blowdown_to_standard_Dn}).

    \begin{figure}[h]
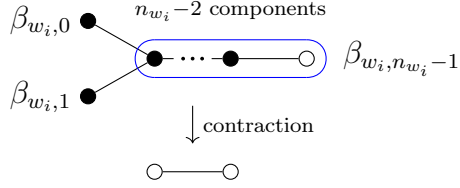

        \centering
        \tikzfig{blowdown_to_standard_Dn}
        \caption{Blowing-down a fiber of type $D_n$ to a fiber of type $A_2$}
        \label{blowdown_to_standard_Dn}
    \end{figure}

    Applying these blow-downs to all reducible fibers of $\varphi$, we reach a conic bundle in which all reducible fibers are of type $A_2$. By Remark \ref{standard_is_A2}, this is a standard conic bundle.
\end{proof}

In what follows, we use Proposition \ref{blow-down_to_standard} to generalize Theorem \ref{standard_conic_bundles} to general conic bundles over rational elliptic surfaces.

\begin{prop}\label{prop_basis_canonical}
    Let $\varphi \colon X \to \PP^1$ be a conic bundle over a rational elliptic surface and $H$ a section of $\varphi$. The following hold.
    \begin{itemize}
        \item[i)] There is a free basis of $\NS \overline{X}$ given by
        \begin{align*}
            \cB = \langle &G, \alpha_{v_1,1}, {.}{.}{.} \;, \alpha_{v_1,n_{v_1}\text{-}1}, {.}{.}{.} \;, \alpha_{v_{\delta},1}, {.}{.}{.} \;, \alpha_{v_{\delta},n_{v_{\delta}}\text{-}1},\\
            & H, \beta_{w_{1},1}, {.}{.}{.}\, , \beta_{w_1,n_{w_{1}}\text{-}1}, {.}{.}{.} \,, \beta_{w_{\varepsilon},1}, {.}{.}{.}, \beta_{w_{\varepsilon},n_{w_{\varepsilon}}\text{-}1}\rangle.
        \end{align*}
    \item [ii)] The canonical divisor of $X$ is given by
        \begin{align*}
            K_X = &-2H + \bigl(\sum_{i=1}^{\delta}k_i + H^2{-}2\bigr)G + \sum_{i=1}^{\delta} \bigl(\sum_{j=1}^{n_{v_i}\text{-}1}(|k_i{-}j|-k_i)\alpha_{v_i,j}\bigr) + \sum_{i=1}^{\varepsilon}\bigl(\sum_{j=1}^{n_{w_i} \text{-}1}j\beta_{w_i,j}\bigr).
        \end{align*}
        \end{itemize}
\end{prop}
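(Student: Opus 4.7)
The plan is to reduce to the standard-conic-bundle case via Proposition \ref{blow-down_to_standard}, apply Theorem \ref{standard_conic_bundles} to the contracted surface $X_0$, and lift the resulting basis and canonical divisor formula back to $X$. Let $\eta\colon X\to X_0$ be the contraction provided by Proposition \ref{blow-down_to_standard}, with $\varphi_0\colon X_0\to\PP^1$ the induced standard conic bundle. For each $A_n$-fiber $G_{v_i}$ the map $\eta$ contracts $\alpha_{v_i,0},\ldots,\alpha_{v_i,k_i-1}$ from one end and $\alpha_{v_i,k_i+2},\ldots,\alpha_{v_i,n_{v_i}-1}$ from the other, and for each $D_n$-fiber $G_{w_i}$ it contracts $\beta_{w_i,2},\ldots,\beta_{w_i,n_{w_i}-1}$, leaving an $A_2$-fiber in each case. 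A count using Proposition \ref{conic_bundle_configuration} shows that $\eta$ is a composition of $8-(\delta+\varepsilon)$ blow-ups, so $K_{X_0}^2=8-(\delta+\varepsilon)$ and Theorem \ref{standard_conic_bundles} applies to $X_0$.

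For part (i), Theorem \ref{standard_conic_bundles} yields a basis $\{G_0,H_0,\ell_1,\ldots,\ell_{\delta+\varepsilon}\}$ of $\NS\overline{X}_0$, where $H_0=\eta(H)$ and each $\ell_i$ is the $(-1)$-component of the corresponding $A_2$-fiber not meeting $H_0$. By the standard description of the Picard group of a blow-up, $\NS\overline{X}=\eta^*\NS\overline{X}_0\oplus\bigoplus\ZZ[\tilde{E}_j]$, where the $\tilde{E}_j$ are the proper transforms in $X$ of the exceptional divisors, which here coincide with the contracted components themselves. Since $H$ meets no contracted curve, $\eta^*H_0=H$, and $\eta^*G_0=G$. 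A blow-up-by-blow-up computation gives $\eta^*\ell_i=\sum_{j=k_i+1}^{n_{v_i}-1}\alpha_{v_i,j}$ in the $A_n$-case and $\eta^*\ell_i=\beta_{w_i,1}+\sum_{j=2}^{n_{w_i}-1}\beta_{w_i,j}$ in the $D_n$-case. Using the fiber relations $G=\sum_{j=0}^{n_{v_i}-1}\alpha_{v_i,j}$ and $G=\beta_{w_i,0}+\beta_{w_i,1}+2\sum_{j=2}^{n_{w_i}-1}\beta_{w_i,j}$, one then rewrites this basis as the claimed $\cB$ through an invertible integer change of variables.

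For part (ii), the Adjunction Formula $K_X\cdot C=-2-C^2$ applied to each smooth rational basis element determines $K_X$ uniquely, since $\cB$ is a basis and the intersection pairing on $\NS\overline{X}$ is non-degenerate. Writing $K_X=aG+bH+\sum_{i,j}x_{i,j}\alpha_{v_i,j}+\sum_{i,j}y_{i,j}\beta_{w_i,j}$, pairing with $G$ immediately gives $b=-2$. Pairing with the interior components $\alpha_{v_i,j}$, $j\neq k_i$, yields the discrete Laplace equation $x_{i,j-1}+x_{i,j+1}-2x_{i,j}=0$, perturbed to equal $2$ at $j=k_i$ where $H$ meets, and satisfying the boundary conditions $x_{i,0}=0$ (deduced from $\alpha_{v_i,0}=G-\sum_{j\ge 1}\alpha_{v_i,j}$ combined with $K_X\cdot\alpha_{v_i,0}=-1$) and $x_{i,n_{v_i}-2}-x_{i,n_{v_i}-1}=-1$; the unique solution is $x_{i,j}=|k_i-j|-k_i$. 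An analogous tridiagonal system on the $D_n$-chain forces $y_{i,j}=j$, and pairing with $H$ finally determines $a=\sum_i k_i+H^2-2$.

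The main obstacle is the bookkeeping in part (i): one must carefully compute $\eta^*\ell_i$ through the tower of blow-ups (using that the exceptional divisor of each intermediate blow-up is added to the pullback of any curve through the blown-up point) and then verify that the change of basis from the pullback basis to $\cB$ has determinant $\pm 1$. The computations in part (ii) are essentially a verification of a combinatorial formula, and the resulting canonical class can be cross-checked by computing $K_X^2$ and confirming it equals $0$, as required for a rational elliptic surface.
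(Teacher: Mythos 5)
Your argument is correct, and part (i) follows essentially the paper's own route: contract to a standard conic bundle via Proposition \ref{blow-down_to_standard}, apply Theorem \ref{standard_conic_bundles} to $X_0$, and recover $\NS\overline{X}$ from the blow-up structure; your explicit computation of $\eta^*\ell_i$ as $\sum_{j\ge k_i+1}\alpha_{v_i,j}$ (resp.\ $\sum_{j\ge 1}\beta_{w_i,j}$) and the unimodular change of basis using $\alpha_{v_i,0}=G-\sum_{j\ge1}\alpha_{v_i,j}$ is just a more careful version of what the paper does with \cite[Prop.\ II.3(iii)]{beauville}. (One small point of care: the standard orthogonal decomposition of $\NS\overline{X}$ uses the \emph{total} transforms of the exceptional divisors; the strict transforms you use differ from these by a unipotent triangular change of basis, so your basis claim is fine but should be phrased that way.) Part (ii) is where you genuinely diverge: the paper derives $K_X$ constructively, pulling $K_{X_0}$ back through each individual blow-up with the formula $K_{X'}=\eta^*K_X+E$ and tracking coefficients fiber by fiber, whereas you pin down $K_X$ abstractly by writing it in the basis $\cB$ and imposing adjunction $K_X\cdot C=-2-C^2$ against $G$, $H$ and the fiber components, using non-degeneracy of the intersection pairing (and torsion-freeness of $\NS\overline{X}$) for uniqueness; the resulting tridiagonal systems indeed have the unique solutions $x_{i,j}=|k_i-j|-k_i$ and $y_{i,j}=j$, and your determination of $a=\sum_i k_i+H^2-2$ from pairing with $H$ checks out. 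Your route trades the blow-up bookkeeping for a linear-algebra verification, at the cost of depending on part (i) and of needing each basis class represented by a smooth rational curve (which holds here); the paper's route is independent of guessing or solving for the answer. One cosmetic inaccuracy: the ``boundary condition $x_{i,0}=0$'' is not deduced from adjunction on $\alpha_{v_i,0}$ (that pairing is automatically consistent); it is simply the fact that $\alpha_{v_i,0}$ does not occur in the basis $\cB$, so the equation at $j=1$ already has no $x_{i,0}$ term. This does not affect the validity of the argument.
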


\begin{proof}
    Let $\eta \colon X \to X_0$ be the contraction to a standard conic bundle in Proposition \ref{blow-down_to_standard}. For each reducible fiber with $n$ components, $n-2$ components are contracted. Thus there are $\sum_{i=1}^{\delta}(n_{v_i}{-}2) + \sum_{i=1}^{\varepsilon}(n_{w_i}{-}2)$ individual contractions. We can decompose $\eta$ as

    \begin{center}
        $\eta = \eta_{v_1,n_{v_1}\text{-}2} \circ \cdots \circ \eta_{v_1,1} \circ \cdots \circ \eta_{v_{\delta},1} \circ \eta_{w_1,n_{\varepsilon}\text{-}2} \circ \cdots \circ \eta_{w_{\varepsilon},1}$,
    \end{center}
    where $\eta_{v_i,j}$ (respectively $\eta_{w_i,j}$) is the $j$-th contraction of a component of $G_{v_i}$ (respectively $G_{w_i}$). These maps correspond to blow-ups with the contracted component as the exceptional divisor. Theorem \ref{standard_conic_bundles} gives us a free basis of $\NS X_0$ and the canonical divisor $K_{X_0}$. In what follows, we apply basic properties of blow-ups (see \cite[Proposition II.3]{beauville}) to each of the maps $\eta_{v_i,j}$ and $\eta_{w_i,j}$.\\
    
    i) The image of $H$ by $\eta$ is a section of the conic bundle $\varphi_0 \colon X_0 \to \PP^1$. Indeed, by Proposition \ref{blow-down_to_standard}, the contracted divisors do not intersect $H$. To simplify notation, we also refer to this section as $H$. For each fiber $G_{v_i}$, all of its components are contracted by $\eta$, except for $\alpha_{v_i,k_i}$ and $\alpha_{v_i,k_i+1}$. Similarly, for each fiber $G_{w_i}$, all components are contracted except for $\beta_{w_i,0}$ and $\beta_{w_i,1}$. We also refer to the image of these components in $X_0$ by the same notation, and to the fiber class of $\varphi_0$ by $G$. Since $H$ intersects the components $\alpha_{v_i,k_i}$ and $\beta_{w_i,0}$, we know by Theorem \ref{standard_conic_bundles} that $\NS \overline{X_0}$ is generated by the free basis
    \begin{center}
         $\langle G, H, \alpha_{v_1,k_i+1}, {.}{.}{.} \;, \alpha_{v_\delta,k_{\delta}+1}, \beta_{w_{1},1}, {.}{.}{.}\, , \beta_{w_{\varepsilon},1}\rangle$.
    \end{center}

    By \cite[Proposition II.3.(iii)]{beauville}, $\NS \overline{X}$ is generated by by the pullback of the basis of $NS \overline{X_0}$ and the exceptional divisors the blow-ups $\eta_{v_i,j},\eta_{w_i,j}$. When $k_i = 0$ for all $i = 1, \ldots,\delta$, this is equal to $\cB$ and we are done. Otherwise, we write $\alpha_{v_i,0}$ in terms of the basis $\cB$ as $G - \alpha_{v_i,1} - \ldots - \alpha_{v_i,n_{v_i}\text{-}1}$, so $\cB$ generates $\NS \overline{X}$.\\

    ii) By Theorem \ref{standard_conic_bundles}, the canonical divisor of $X_0$ is given by
    \begin{equation*}
        K_{X_0} = -2H + (H^2-2)G + \sum_{i=1}^{\delta}\alpha_{v_i,k_i+1} + \sum_{i=1}^{\varepsilon}\beta_{w_i,1}\text{.}
    \end{equation*}
    
    We obtain $K_X$ by applying  of \cite[Proposition II.3.(iv)]{beauville} successively for each individual blow-up $\eta_{v_i,j}, \eta_{w_i,j}$. Firstly, notice that since the section $H$ and a general fiber $G$ do not intersect any of the exceptional divisors, their pullbacks by any $\eta_{v_i,j}, \eta_{w_i,j}$ are given by only their strict transforms. Therefore, we focus on calculating the pullbacks on $\alpha_{v_i,k_i+1}$ and $\beta_{w_i,1}$, as well as the exceptional divisors introduced by each blow-up. We can do this fiber by fiber.
    
    We start with a fiber $\varphi_0^{-1}(w_i)$. The map, $\eta_{w_i,1}$ is centered at a point of $\beta_{w_i,1}$, and its exceptional divisor corresponds to $\beta_{w_i,2}$. Thus, we calculate
    \begin{equation*}
        \eta_{w_i,1}^*(\beta_{w_i,1}) + \beta_{w_i,2} = \beta_{w_i,1} + 2\beta_{w_i,2}\text{.}
    \end{equation*}
    Subsequently, the $j$-th blow-up $\eta_{w_i,j}$ is centered at the component $\beta_{w_i,j}$. Applying this for all $j$ up to $n_{w_i}-2$, we conclude the part of $K_X$ supported in $G_{w_i}$ is equal to
    \begin{equation*}
        \sum_{j=1}^{n_{w_i}\text{-}1}j\beta_{w_i,j}. 
    \end{equation*}
    
    For a fiber $\varphi_0^{-1}(v_i)$, the blow-up $\eta_{v_i,1}$ is centered at $\alpha_{v_i,k_i{+}1}$ with exceptional divisor corresponding to $\alpha_{v_i,k_i{+}2}$. We calculate
    \begin{equation*}
        \eta_{v_i,1}^*(\alpha_{v_i,k_i{+}1}) + \alpha_{v_i,k_i\text{+}2} = \alpha_{v_i,k_i{+}1} + 2\alpha_{v_i,k_i{+}2}\text{.}
    \end{equation*}
    Subsequently, for $j = 1, \ldots, n_i{-}k_i{-}2$, the blow-up $\eta_{v_i,j}$ is centered at $\alpha_{v_i,k_i{+}j}$. Taking their pullbacks successively, we obtain
    \begin{equation*}
        \sum_{j = 1}^{n_i\text{-}k_i\text{-}1} j \alpha_{v_i,k_i{+}j}\text{.}
    \end{equation*}
    The blow-up $\eta_{v_i,n_i\text{-}k_i\text{-}1}$ is centered at $\alpha_{v_i,k_i}$, which is not a component in the canonical divisor. Therefore we only add the exceptional curve $\alpha_{v_i,k_i{-}1}$. For $j = 0, \ldots , k_i-1$, the blow-up $\eta_{v_i,n_{v_i}\text{-}k_i\text{-}1{+}j}$ is centered at $\alpha_{v_i,k_i{-}j}$, and we conclude that the part of $K_X$ supported in $G_{v_i}$ is equal to 
    \begin{equation*}
       \sum_{j=1}^{k_i}j \alpha_{v_i,k_i\text{-}j} + \sum_{j = 1}^{n_i\text{-}k_i\text{-}1} j \alpha_{v_i,k_i{+}j} = \sum_{j=0}^{n_{v_i}\text{-}1}|k_i{-}j|\alpha_{v_i,j} \text{.}
    \end{equation*}
    In order to write $K_X$ in terms of the basis $\cB$, we substitute
    \begin{equation*}
        k_i\alpha_{v_i,0} = k_i(G - \alpha_{v_i,1} - \ldots - \alpha_{v_i,n_{v_i}\text{-}1})
    \end{equation*}
    Thus, we obtain the result.
\end{proof}

\subsection{Mordell--Weil ranks of rational elliptic surfaces via conic bundles}\label{subsection:rank delta}

Let $\pi \colon X \to \PP^1$ be a rational elliptic surface over and $\varphi \colon X \to \PP^1$ a conic bundle over $k$. In this section, we relate the rank $r$ of $\cE(\overline{k}(T))$ to the number $\delta$ of fibers of type $A_n$ of $\varphi$. Firstly, by Corollary \ref{rational_shioda-tate} and Proposition \ref{conic_bundle_configuration}, we know that  both $r$ and $\delta$ are at most 8. The following proposition shows another way in which these numbers are related.

\begin{prop}\label{prop: rank delta 8}
    If $r = 8$, then $\delta = 8$.
\end{prop}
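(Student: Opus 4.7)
The plan is to combine the Shioda--Tate formula for rational elliptic surfaces (Corollary \ref{rational_shioda-tate}), the Euler-number identity for the conic bundle (Proposition \ref{conic_bundle_configuration}), and the fiber classification of Theorem \ref{CBfibers} as recorded in Table \ref{CBfibers_table}.

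First I would invoke Corollary \ref{rational_shioda-tate}. Since $r = 8$, the equality
\begin{equation*}
    8 = 8 - \sum_{v \in \PP^1}(m_v - 1)
\end{equation*}
forces $m_v = 1$ for every $v \in \PP^1$. In other words, every singular fiber of $\pi$ is irreducible, so $X$ contains no $(-2)$-curve arising as a component of a reducible fiber of $\pi$.

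Next I would exploit the classification in Table \ref{CBfibers_table}. Each reducible fiber of $\varphi$ contains, apart from its $(-1)$-components (which are sections of $\pi$), a number of $(-2)$-components, and every such $(-2)$-component is a component of a reducible fiber of $\pi$. Concretely, a fiber of type $D_n$ ($n \geq 3$) contains at least one $(-2)$-curve (in fact $n{-}1$ of them), and a fiber of type $A_n$ with $n \geq 3$ contains $n-2 \geq 1$ $(-2)$-curves. By the first step these cannot appear, so the only reducible fibers of $\varphi$ allowed are of type $A_2$. Therefore $\varepsilon = 0$ and each of the $\delta$ reducible fibers of $\varphi$ has exactly $n_v = 2$ components.

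Finally I would apply Proposition \ref{conic_bundle_configuration}: since every irreducible fiber contributes $0$ and every $A_2$ fiber contributes $n_v - 1 = 1$ to the sum,
\begin{equation*}
    8 = \sum_{v \in \PP^1}(n_v - 1) = \delta,
\end{equation*}
which gives $\delta = 8$. There is no real obstacle here; the content of the argument is the observation that the $(-2)$-components of conic-bundle fibers are exactly the components of reducible $\pi$-fibers, so the vanishing of the latter forces all reducible $\varphi$-fibers into type $A_2$.
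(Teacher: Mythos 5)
Your proposal is correct and follows essentially the same route as the paper: use the Shioda--Tate formula to rule out reducible fibers of $\pi$, hence rule out $(-2)$-curves via Proposition \ref{results_RES}, conclude from the classification that every reducible fiber of $\varphi$ is of type $A_2$, and finish with Proposition \ref{conic_bundle_configuration}. Your version simply spells out the component count of each fiber type a bit more explicitly than the paper does.
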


\begin{proof}
    If $r=8$, by Corollary \ref{rational_shioda-tate} $\pi$ has no reducible fibers. Consequently, we can use Proposition \ref{results_RES} to conclude that there are no rational $(-2)$-curves in $X$. By Theorem \ref{CBfibers}, every reducible fiber of $\varphi$ is of type $A_2$, so using Proposition \ref{conic_bundle_configuration} we obtain $\delta = 8$.
\end{proof}

In what follows, we prove that $\delta \geq r$. We start with a useful definition.

\begin{dfn}\label{definition: ell_v}
    Let $F_v$ be a fiber of $\pi \colon X \to \PP^1$. Then, we define
    \begin{equation*}
        \ell_v \colonequals \#\{\Theta \text{ irreducible component of $F_v$}| \, \Theta \text{ is a fiber component of $\varphi$}\}. 
    \end{equation*}
\end{dfn}

Before the main result, we prove the following Lemmata.

\begin{lemma}\label{lemma:shared_components}
    Let $F_v$ be a fiber of $\pi \colon X \to \PP^1$, and let $m_v$ be its number of irreducible components. Then, $\ell_v \leq m_v -1$.
\end{lemma}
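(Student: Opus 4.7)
The plan is to argue by contradiction. Suppose $\ell_v = m_v$, so that every irreducible component of $F_v$ is a fiber component of $\varphi$. Writing $F_v = \sum_i n_i \Theta_i$ as a divisor with positive multiplicities $n_i$, each $\Theta_i$ lies in some fiber of $\varphi$. The key observation is that any irreducible component $\Theta$ of a fiber $G_w$ of $\varphi$ satisfies $\Theta \cdot G = 0$ in $\NS \overline{X}$, since $G$ can be represented by another fiber of $\varphi$ disjoint from $\Theta$. Summing yields
\begin{equation*}
    F_v \cdot G \;=\; \sum_i n_i (\Theta_i \cdot G) \;=\; 0.
\end{equation*}

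On the other hand, $F_v$ is numerically equivalent to the fiber class $F$ of $\pi$, and Theorem \ref{CBfibers} records that $F \cdot G = 2$. This gives $F_v \cdot G = 2$, contradicting the previous display. Hence the assumption $\ell_v = m_v$ is untenable, and at least one irreducible component of $F_v$ fails to be a component of any fiber of $\varphi$, i.e.\ $\ell_v \leq m_v - 1$.

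I do not anticipate a genuine obstacle here: the argument reduces to the numerical identity $F \cdot G = 2$ already supplied by Theorem \ref{CBfibers}, together with the elementary fact that a component of a fiber of $\varphi$ is numerically trivial against $G$. No case analysis on the Kodaira type of $F_v$ or on the conic-bundle type of the fibers of $\varphi$ (listed in Table \ref{CBfibers_table}) is required, and multiplicities of non-reduced fiber components cause no difficulty because they enter linearly in both sides of the intersection computation.
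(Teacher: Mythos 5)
Your proof is correct and matches the paper's argument: assuming $\ell_v = m_v$ forces $F_v \cdot G = 0$, contradicting $F \cdot G = 2$ from Theorem \ref{CBfibers}. You merely spell out the intermediate step $\Theta_i \cdot G = 0$ more explicitly than the paper does.
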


\begin{proof}
    By Definition \ref{definition: ell_v}, its clear that $\ell_v \leq m_v$. Suppose that for a fiber $F_v$ we have $\ell_v = m_v$. Then, since every component of $F_v$ is a fiber component of $\varphi$, we have $F_v \cdot G = 0$. This is not possible by Theorem \ref{CBfibers}.
\end{proof}

\begin{lemma}\label{lemma: ell_v}
    We can write the sum of $\ell_v$ for every $v \in \PP^1$ as follows:
    \begin{equation*}
        \sum_{v \in \PP^1} \ell_v = \sum_{i=1}^{\delta} (n_{v_i}-2) + \sum_{i=1}^{\varepsilon}(n_{w_i}-1)   
    \end{equation*}
\end{lemma}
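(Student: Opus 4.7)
The plan is to interpret the left-hand side $\sum_{v \in \PP^1} \ell_v$ as the total number of irreducible curves $\Theta \subset \overline{X}$ that appear simultaneously as a component of some reducible fiber $F_v$ of $\pi$ and as a component of some reducible fiber of $\varphi$. Since an irreducible curve belongs to at most one fiber of $\pi$, this double count is unambiguous.

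First I would characterize which components of fibers of $\varphi$ are also components of fibers of $\pi$. By Proposition \ref{results_RES}(v), a rational curve in $X$ is a fiber component of $\pi$ if and only if it has self-intersection $-2$. Every component of a fiber of $\varphi$ is rational (as fibers of $\varphi$ have arithmetic genus $0$), so the components of $F_v$ appearing inside a conic-bundle fiber are exactly the $(-2)$-components of the reducible fibers of $\varphi$. Thus
\begin{equation*}
    \sum_{v \in \PP^1} \ell_v = \#\{\Theta \text{ component of a reducible fiber of } \varphi \mid \Theta^2 = -2\}.
\end{equation*}

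Next I would carry out the count fiber by fiber using Notation \ref{notation_CB_fibers} and Table \ref{CBfibers_table}. For a fiber $G_{v_i}$ of type $A_{n_{v_i}}$, the two extremal components $\alpha_{v_i,0}$ and $\alpha_{v_i,n_{v_i}-1}$ have self-intersection $-1$, while the remaining $n_{v_i}-2$ inner components are $(-2)$-curves; this contributes $n_{v_i}-2$. For a fiber $G_{w_i}$ of type $D_{n_{w_i}}$, only the single extremal component $\beta_{w_i,n_{w_i}-1}$ is a $(-1)$-curve and the other $n_{w_i}-1$ components are $(-2)$-curves, contributing $n_{w_i}-1$. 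Summing over all reducible conic-bundle fibers gives
\begin{equation*}
    \sum_{v \in \PP^1} \ell_v = \sum_{i=1}^{\delta}(n_{v_i}-2) + \sum_{i=1}^{\varepsilon}(n_{w_i}-1),
\end{equation*}
as claimed.

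There is no real obstacle here; the argument is essentially bookkeeping, and the only non-formal input is the fact that on a rational elliptic surface the property of being a $(-2)$-rational curve is equivalent to being a fiber component of $\pi$, which is precisely Proposition \ref{results_RES}(v). This same observation ensures that the count does not miss or double count any curves across the two fibrations.
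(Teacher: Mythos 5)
Your proposal is correct and follows essentially the same route as the paper: identify $\sum_{v}\ell_v$ with the number of $(-2)$-components of reducible fibers of $\varphi$ via Proposition \ref{results_RES}(v), then count these components fiber by fiber using the classification in Table \ref{CBfibers_table} ($n_{v_i}-2$ for type $A_{n_{v_i}}$, $n_{w_i}-1$ for type $D_{n_{w_i}}$). Your write-up merely makes explicit the bookkeeping that the paper leaves implicit.
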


\begin{proof}
    By Proposition \ref{results_RES}, every $(-2)$-curve in $X$ is a fiber component of $\pi$. Therefore, 
    \begin{equation*}
        \sum_{v \in \PP^1} \ell_v = \#\{\Theta \text{ irreducible component of a fiber of $\varphi$}| \, \Theta^2 = -2\}.
    \end{equation*}
    We obtain the result by the classification in Table \ref{CBfibers_table}.
\end{proof}

\begin{lemma}\label{lemma:defect}
    We can write the difference $\delta - r$ as follows:
    \begin{equation*}
        \delta - r = \sum_{v \in \PP^1} (m_v-1 - \ell_v). 
    \end{equation*}
\end{lemma}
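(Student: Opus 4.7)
The plan is to combine the two global counting formulas already in play: the Shioda--Tate formula for $r$ (Corollary \ref{rational_shioda-tate}) and the Euler-number count for the conic bundle $\varphi$ (Proposition \ref{conic_bundle_configuration}), together with Lemma \ref{lemma: ell_v}, which expresses $\sum_v \ell_v$ purely in terms of the combinatorics of the $A_n$ and $D_n$ fibers of $\varphi$.

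First I would rewrite Proposition \ref{conic_bundle_configuration} by separating the contributions of the reducible fibers of $\varphi$ according to type, namely
\begin{equation*}
    \sum_{i=1}^{\delta}(n_{v_i}-1) + \sum_{i=1}^{\varepsilon}(n_{w_i}-1) = 8,
\end{equation*}
since irreducible fibers contribute $n_v - 1 = 0$. Then using Lemma \ref{lemma: ell_v},
\begin{equation*}
    \sum_{v \in \PP^1}\ell_v = \sum_{i=1}^{\delta}(n_{v_i}-2) + \sum_{i=1}^{\varepsilon}(n_{w_i}-1) = \Bigl(\sum_{i=1}^{\delta}(n_{v_i}-1) + \sum_{i=1}^{\varepsilon}(n_{w_i}-1)\Bigr) - \delta = 8 - \delta.
\end{equation*}
This yields the clean identity $\delta = 8 - \sum_v \ell_v$.

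Next I invoke Corollary \ref{rational_shioda-tate}, which gives $r = 8 - \sum_v (m_v - 1)$. Subtracting the two expressions term by term,
\begin{equation*}
    \delta - r = \Bigl(8 - \sum_{v \in \PP^1}\ell_v\Bigr) - \Bigl(8 - \sum_{v \in \PP^1}(m_v - 1)\Bigr) = \sum_{v \in \PP^1}(m_v - 1 - \ell_v),
\end{equation*}
which is exactly the claim.

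There is no genuine obstacle here beyond the bookkeeping: the content of the statement is already contained in the three preceding results, and the main point is that the $-2$ appearing in the $A_n$ contribution (versus $-1$ in the $D_n$ contribution) in Lemma \ref{lemma: ell_v} is precisely what converts the conic-bundle count $\sum(n_v - 1) = 8$ into the combination $8 - \delta$. The nonnegativity of each summand is then guaranteed by Lemma \ref{lemma:shared_components}, although it is not needed for the equality itself.
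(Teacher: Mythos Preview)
Your proof is correct and follows essentially the same approach as the paper: both combine Proposition \ref{conic_bundle_configuration}, Lemma \ref{lemma: ell_v}, and Corollary \ref{rational_shioda-tate} to obtain $8 = \delta + \sum_v \ell_v = r + \sum_v(m_v-1)$ and then rearrange. Your closing remark about the role of Lemma \ref{lemma:shared_components} is also accurate --- it is used only for the subsequent inequality, not here.
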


\begin{proof}
    By algebraic manipulation we can write
    
    \begin{equation*}
        \sum_{v \in \PP^1} (n_v -1) = \delta + \sum_{i=1}^{\delta}(n_{v_i}-2) + \sum_{i=1}^{\varepsilon} (n_{w_i}-1)\text{.}
    \end{equation*}
    Then, Lemma \ref{lemma: ell_v} yields
    \begin{equation*}
        \sum_{v \in \PP^1} (n_v -1) = \delta + \sum_{v \in \PP^1} \ell_v\text{.} 
    \end{equation*}
    By Proposition \ref{conic_bundle_configuration} and the Shioda--Tate formula (Corollary \ref{rational_shioda-tate}), we have
    \begin{equation*}
        8 = \delta + \sum_{v \in \PP^1} \ell_v = r + \sum_{v \in \PP^1}(m_v-1)\text{.}
    \end{equation*}
    Rearranging, we obtain the result.
\end{proof}

With this, we are ready to prove the result.

\begin{prop}\label{prop:defect is not negative}
    Let $X$ be a rational surface with an elliptic fibration $\pi$ of Mordell--Weil rank $r$ over $\overline{k}$. Let $\varphi \colon X \to \PP^1$  be a conic bundle  with $\delta$ fibers of type $A_n$. Then, $\delta \geq r$.
\end{prop}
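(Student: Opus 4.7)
The plan is to observe that this proposition is essentially an immediate consequence of the two preceding lemmas, which do all the real work. By Lemma \ref{lemma:defect}, the difference $\delta - r$ can be expressed as a sum over the points $v \in \PP^1$:
\[
\delta - r = \sum_{v \in \PP^1} (m_v - 1 - \ell_v).
\]
So the entire task reduces to showing that this sum is non-negative, which will follow termwise.

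For each $v \in \PP^1$, I would invoke Lemma \ref{lemma:shared_components}, which gives $\ell_v \leq m_v - 1$, i.e., the number of components of $F_v$ that also appear as components of conic fibers of $\varphi$ is strictly less than the total number of components of $F_v$. This is non-trivial for reducible $F_v$ but guaranteed by the fact that $F_v \cdot G = 2 \neq 0$ (Theorem \ref{CBfibers}), so no full elliptic fiber can be contained in a conic fiber. For smooth (irreducible) fibers $F_v$, both $m_v - 1$ and $\ell_v$ are zero, so the inequality is trivial. Thus every summand $m_v - 1 - \ell_v$ is a non-negative integer, and consequently
\[
\delta - r = \sum_{v \in \PP^1}(m_v - 1 - \ell_v) \geq 0,
\]
which is the desired inequality.

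Since the two lemmas have already been proved, there is no real obstacle here; the proof is a one-line synthesis. The only subtlety worth flagging in the write-up is that the sum is finite (only reducible fibers contribute), and that equality $\delta = r$ holds precisely when $\ell_v = m_v - 1$ for every reducible fiber of $\pi$, i.e., when every reducible elliptic fiber shares all but one of its components with conic fibers of $\varphi$. This observation is useful context for the subsequent definition of the defect $\Df(\cE) = \delta - r$ referenced earlier in the introduction.
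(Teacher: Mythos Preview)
Your proof is correct and follows exactly the same route as the paper: invoke Lemma~\ref{lemma:defect} to express $\delta - r$ as $\sum_v (m_v - 1 - \ell_v)$, then apply Lemma~\ref{lemma:shared_components} termwise to conclude non-negativity. The additional remarks on finiteness of the sum and the equality case are accurate and harmless extras.
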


\begin{proof}
    Applying Lemma \ref{lemma:shared_components}, we have
    \begin{equation*}
        \sum_{v \in \PP^1} \ell_v \leq \sum_{v \in \PP^1} (m_v-1).
    \end{equation*}
    Thus, Lemma \ref{lemma:defect} yields
    \begin{equation*}
        \delta - r = \sum_{v \in \PP^1} (m_v-1 - \ell_v) \geq 0.
    \end{equation*}
\end{proof}

\begin{remark}
    Notice that Proposition \ref{prop: rank delta 8} is a special case of Proposition \ref{prop:defect is not negative}.
\end{remark}

\section{Conic bundles via Weierstrass Equations}\label{section:CBs_Weierstrass}

In this section, we study the elliptic curves $\cE$ over $k(T)$ defined by Equation \ref{equation1}

\begin{equation}\tag{\ref{equation1}}
    y^2 = a_3(T)x^3 + a_2(T)x^2 + a_1(T)x + a_0(t)\text{,}
\end{equation}

where $\deg a_i(T) \leq 2$, $a_i(T)$ are not all multiple of $(T-c)^2$ for any $c \in \overline{k}$ and $\Deltaell(T)$ not identically 0. Under these assumptions, the Kodaira--Néron model of $\cE$ is a rational elliptic surface $\pi \colon X \to \PP^1$. Indeed, applying the coordinate changes $x \mapsto x/a_3(T)$, $y \mapsto y/a_3(T)$ yields an equation in Weierstrass form, to which we can apply the criterion in \cite[Section 5.13]{schutt-shioda}.

We can rewrite Equation \ref{equation1} to obtain

\begin{equation}\tag{\ref{equation2}}
    y^2 = A(x)T^2 + B(x)T + C(x).
\end{equation}

Let $\Deltaconic(x) \colonequals B(x)^2 - 4A(x)C(x)$. For each $\theta \in \overline{k}$ such that $\Deltaconic(\theta) = 0$, we have a pair of points $P_{\theta}, -P_{\theta} \in \cE(\overline{k}(T))$, where

\begin{equation}\tag{\ref{points from CB}}
    P_{\theta} = \begin{cases}
        \Bigl(\theta, \sqrt{A(\theta)} \bigl(T - \frac{B(\theta)}{2A(\theta)}\bigr)\Bigr) & \text{if $A(\theta) \neq 0$,}\\
        \hfil\Bigl(\theta, \sqrt{C(\theta)}\Bigr) & \text{if $A(\theta) = 0$.}
    \end{cases} 
\end{equation}

Equation \ref{equation2} also determines a linear system of conics in $X$, i.e. a conic bundle $\varphi \colon X \to \PP^1$. Through this section, we follow Notation \ref{notation_CB_fibers} for reducible fibers of conic bundles. We fix the conic bundle $\varphi$ determined by \ref{equation2}, so $\delta$ denotes the number of fibers of $\varphi$ of type $A_n$ and $\varepsilon$ the number of fibers of type $D_n$.

\subsection{The fiber types of a conic bundle on a rational elliptic surface}\label{subsection: determining fiber types}

\sloppy Let ${\varphi \colon X \to \PP^1}$ be the conic bundle as in Equation \ref{equation2}. In what follows, we determine the singular fibers of $\varphi$ and their respective types, as classified in Theorem \ref{CBfibers}.

\begin{prop}\label{determining_conic_fibers}
    \sloppy Let $\Deltaconic(x) = B(x)^2 - 4A(x)C(x)$, and for every $\theta \in \overline{k}$, let ${G_{\theta} \colonequals \varphi^{-1}([\theta{:}1])}$. Then, $G_{\theta}$ is singular if and only if $\Deltaconic(\theta) = 0$. Moreover, assuming $v_{(x-\theta)}(\Deltaconic) = n-1$, the following hold. 
    \begin{itemize}
        \item[1.] If $A(\theta) \neq 0$ or $C(\theta) \neq 0$, then $G_{\theta}$ is of type $A_n$.
        \item[2.] If $A(\theta) = C(\theta) = 0$, then $G_{\theta}$ is of type $D_n$.
    \end{itemize}
\end{prop}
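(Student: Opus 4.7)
The plan is to analyze the fiber $G_\theta$ via the explicit equation $y^2 = A(x)T^2 + B(x)T + C(x)$, working in local coordinates at the points where $G_\theta$ meets the singular locus of the unresolved model $W$, and then reading off the fiber type in Table \ref{CBfibers_table} from the minimal resolution.

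For the singular/smooth dichotomy, the (unresolved) fiber of $W \to \PP^1_x$ over $\theta$ is the projective closure of the affine conic $y^2 = A(\theta)T^2 + B(\theta)T + C(\theta)$. A standard computation with the discriminant matrix of the ternary quadratic form shows that this conic is smooth iff $\Deltaconic(\theta) \neq 0$. Since $W \dashrightarrow X$ affects $G_\theta$ only through surface singularities of $W$ on $G_\theta$, which force $G_\theta$ to be singular already in $W$, we conclude $G_\theta$ is singular in $X$ precisely when $\Deltaconic(\theta) = 0$.

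For Case~1 ($A(\theta) \neq 0$), I would complete the square in $T$ via the identity
\[
A(x)T^2 + B(x)T + C(x) = A(x)\left(T + \tfrac{B(x)}{2A(x)}\right)^{\!2} - \tfrac{\Deltaconic(x)}{4A(x)},
\]
which holds in the local ring at $\theta$ since $A(\theta) \neq 0$. Setting $u = x-\theta$ and $s = T + B(x)/(2A(x))$, and absorbing the unit $A(x)$ into a new coordinate $y' = y/\sqrt{A(x)}$ (étale-locally over $\overline{k}$), the equation becomes
\[
(y'-s)(y'+s) = u^{n-1}\cdot(\mathrm{unit}).
\]
This is the standard local model of an $A_{n-2}$ rational double point (and of a smooth point when $n=2$), whose minimal resolution inserts $n-2$ successive $(-2)$-curves between the two $(-1)$-curves cut out by $y' = \pm s$. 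Altogether $G_\theta$ acquires $n$ components forming the chain of type $A_n$. The remaining sub-case, $A(\theta)=0$ but $C(\theta)\neq 0$, reduces to the above by the birational change $T\leftrightarrow 1/T$ and an appropriate rescaling of $y$, which interchanges $A$ and $C$ while preserving $\Deltaconic$.

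For Case~2 ($A(\theta) = C(\theta) = 0$, forcing $B(\theta) = 0$), I would factor $u = x-\theta$ out of each coefficient, writing $A = u\tilde A$, $B = u\tilde B$, $C = u\tilde C$, so that near the fiber the equation becomes $y^2 = u\,\tilde F(x,T)$ with $\tilde F = \tilde A T^2 + \tilde B T + \tilde C$ and $v_{(x-\theta)}(\tilde{\Deltaconic}) = n-3$. The $2$-torsion section $(\theta,0)$ is a genuine $(-1)$-component of $G_\theta$, which accounts for the single $(-1)$-curve distinguishing type-$D$ fibers. The remaining components come from resolving singularities of $W$ along the double line $y = u = 0$, which occur exactly where $\tilde F(\theta,T)$ vanishes. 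For $n = 3$ there are two simple zeros of $\tilde F(\theta,T)$, each producing an $A_1$ singularity resolved by a single $(-2)$-curve, and the resulting three components assemble into the $D_3$ configuration. For larger $n$ one has $\tilde{\Deltaconic}(\theta) = 0$, so the zeros of $\tilde F(\theta,T)$ collide and the local model is worse than $A_1$.

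The main obstacle is the $D_n$ case for $n\geq 4$: when $\tilde{\Deltaconic}(\theta)$ vanishes to order $n-3\geq 1$, the double line of $y^2 = u\tilde F$ passes through deeper surface singularities, and one has to run the full sequence of blow-ups to verify that the outcome is precisely the branched $D_n$ configuration — one $(-1)$-tail, two $(-2)$-branches of the fork, and a connecting chain of $(-2)$-curves of the expected length — with total component count exactly $n$. In contrast, Case~1 admits the clean reduction to a standard $A_{n-2}$ rational double point and is essentially a one-line calculation after completing the square.
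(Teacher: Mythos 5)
Your treatment of the smooth/singular criterion and of Case~1 matches the paper's: complete the square in $T$, recognize the local model as an $A_{n-2}$ Du Val singularity whose resolution inserts $n-2$ chains of $(-2)$-curves between the two lines $y=\pm\sqrt{A(\theta)}\,T'$, and swap the roles of $A$ and $C$ via $T\mapsto 1/T$ when $A(\theta)=0\neq C(\theta)$. That part is fine.

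The genuine gap is exactly where you flag it: Case~2 with $n\geq 4$ is not proved, and it is not a routine leftover — without it the statement ``$G_\theta$ is of type $D_n$'' (branched configuration \emph{and} exactly $n$ components) is unestablished for all but the $D_3$ subcase. Your local model $y^2=u\tilde F$ with colliding zeros of $\tilde F(\theta,\cdot)$ leads to $D$-type surface singularities of $W$ (and possibly a zero at $T=\infty$, which you should treat in the chart $T=1/u'$), and chasing the full sequence of blow-ups to confirm the fork-plus-chain shape and the component count is precisely the work you have not done. The paper avoids this by invoking the classification of conic-bundle fibers on rational elliptic surfaces (Theorem \ref{CBfibers}, Table \ref{CBfibers_table}): once the special fiber of the Weierstrass model is the non-reduced double line $y^2=0$, the fiber of $\varphi$ on $X$ must be of type $D_m$ for some $m\geq 3$, so the \emph{shape} of the configuration never needs to be verified by hand — only the number of components, which is read off from the resolution (equivalently, it is forced by the local contributions of the $A_n$ fibers already computed, the fiber at infinity from Proposition \ref{fiber_at_infinity}, and the global constraint $\sum_v(n_v-1)=8$ of Proposition \ref{conic_bundle_configuration}). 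Importing that classification step, or else actually carrying out the $D$-type resolution you postpone, is what is needed to close your argument.
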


\begin{proof}

Firstly, assume $A(\theta) \neq 0$. Then, applying the change of coordinates $T = T' - \tfrac{B(x)}{2A(x)}$, we obtain the following equation.

\begin{equation*}
    y^2 = A(x)(T')^{2} - \frac{\Deltaconic(x)}{2A(x)}.
\end{equation*}

Testing by partial derivatives, the point $y = T' = 0$ is singular in the special fiber at $(x-\theta)$ if and only if $\Deltaconic(\theta) = 0$.
If $v_{(x-\theta)}(\Deltaconic(x)) = 1$, then $y = T' = z = 0$ is regular in $X$ (see \cite[Corollary 4.2.12]{liu2006}), and $G_{\theta}$ is of type $A_2$, composed of the two lines $y = \sqrt{A(\theta)} T'$ and $y = -\sqrt{A(\theta)} T'$. If $v_{(x-\theta)}(\Deltaconic(x)) = n - 1$ for $n \geq 3$, then the point $y = T' = z = 0$ is a singularity of type $A_{n-2}$ (see \cite[Table 1]{reid_duval}). Then, $G_{\theta}$ is of type $A_n$, with two components coming from the lines $y = \sqrt{A(\theta)} T'$ and $y = -\sqrt{A(\theta)} T'$ and $n-2$ components in the resolution of the singularity at $y = T' = z = 0$.

Assuming $A(\theta) = 0$ and $C(\theta) \neq 0$, we can apply the change of coordinates $T = 1/u$, $y = y'/u$, arriving at the equation

\begin{equation*}
    (y')^2 = C(x)u^2 + B(x)u + A(x). 
\end{equation*}

Thus, the type of $G_{\theta}$ follows by the previous method.

Finally, assume $A(\theta) = C(\theta) = 0$. Then, $\Deltaconic(x) = B(x)^2$. If $B(\theta) \neq 0$, then the special fiber at $(x-\theta)$ is smooth. Otherwise, the special fiber is a non reduced curve given by $y^2 = 0$. By the classification in Theorem \ref{CBfibers}, we know that $G_{\theta}$ is a fiber of type $D_n$ for some $n \geq 3$. By the resolution of the special fiber, $n = v_{(x-\theta)}(\Deltaconic) + 1$.
\end{proof}

Notice that Proposition \ref{determining_conic_fibers} does not determine the type for the fiber at infinity. In order do this, we perform the change of coordinates $x \mapsto 1/s$, $y = y'/s^2$, obtaining the following equation.

\begin{equation}\label{fiber_at_infinity_eq}
    y'^2 = \tilde{A}(s)T^2 + \tilde{B}(s)T + \tilde{C}(s), 
\end{equation}

where $\tilde{A}(s) = s^4A(1/s)$, $\tilde{B}(s) = s^4B(1/s)$, $\tilde{C}(s) = s^4C(1/s)$. Define $\tilde{\Delta}(s) \colonequals s^8\Deltaconic(1/s)$. Since $A$, $B$ and $C$ have degree at most $3$, we know $\tilde{A}(0) = \tilde{B}(0) = \tilde{C}(0) = 0$. An immediate application of Proposition \ref{determining_conic_fibers} to Equation (\ref{fiber_at_infinity_eq}) yields the following.

\begin{prop}\label{fiber_at_infinity}
    Assume $v_s(\tilde{\Delta}) = 8 - \deg(\Deltaconic) = n-1$. Then, the fiber at infinity of $\varphi \colon X \to \PP^1$ is of type $D_n$.
\end{prop}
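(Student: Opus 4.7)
The plan is to reduce the analysis of the fiber at infinity of $\varphi$ to the local analysis carried out in Proposition \ref{determining_conic_fibers}, via a change of chart that sends $x = \infty$ to a finite point. Concretely, I would perform the substitution $x = 1/s$, $y = y'/s^2$ on Equation \ref{equation2} and multiply through by $s^4$. This yields Equation \ref{fiber_at_infinity_eq}, namely $(y')^2 = \tilde{A}(s)T^2 + \tilde{B}(s)T + \tilde{C}(s)$, with $\tilde{A}, \tilde{B}, \tilde{C}$ as defined in the excerpt. Under this birational transformation, the fiber $\varphi^{-1}(\infty)$ becomes the special fiber at $s = 0$ of the new affine model, so determining its type reduces to applying Proposition \ref{determining_conic_fibers} at $\theta = 0$.

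Next, I would verify that the hypothesis of case 2 of Proposition \ref{determining_conic_fibers} is automatic. Since $A(x), B(x), C(x)$ are polynomials of degree at most $3$, each of $s^4 A(1/s)$, $s^4 B(1/s)$, $s^4 C(1/s)$ is divisible by $s$; in particular $\tilde{A}(0) = \tilde{B}(0) = \tilde{C}(0) = 0$, so the condition $\tilde{A}(0) = \tilde{C}(0) = 0$ required by case 2 holds. That case then forces $\varphi^{-1}(\infty)$ to be of type $D_m$ with $m - 1$ equal to the $s$-valuation of the discriminant of the new conic, namely $v_s(\tilde{\Delta})$.

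To finish, I would compute $v_s(\tilde{\Delta})$ directly. Writing $\Deltaconic(x) = \sum_{i=0}^{d} c_i x^i$ with $c_d \neq 0$ and $d = \deg(\Deltaconic)$, the definition $\tilde{\Delta}(s) = s^8 \Deltaconic(1/s) = \sum_i c_i s^{8-i}$ has its lowest nonzero monomial in degree $8 - d$. Hence $v_s(\tilde{\Delta}) = 8 - \deg(\Deltaconic)$, which by hypothesis equals $n - 1$. Combining with the previous step gives that the fiber at infinity is of type $D_n$, as claimed.

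The only genuine point beyond the direct invocation of Proposition \ref{determining_conic_fibers} is to confirm that the change of variables $(x, y, T) \mapsto (1/s, y'/s^2, T)$ is compatible with the conic bundle structure, meaning that the fiber class $G$ of $\varphi$ cuts out the divisor $\{s = 0\}$ on the new chart. This follows from the construction of $\varphi$ from Equation \ref{equation2} by patching the two standard affine opens of the base $\PP^1_x$, and I would treat it as routine rather than as the heart of the argument.
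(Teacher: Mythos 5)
Your proposal is correct and follows the paper's own argument essentially verbatim: the coordinate change $x \mapsto 1/s$, $y = y'/s^2$ producing Equation \ref{fiber_at_infinity_eq}, the observation that $\tilde{A}(0) = \tilde{B}(0) = \tilde{C}(0) = 0$ because $A,B,C$ have degree at most $3$, and then case 2 of Proposition \ref{determining_conic_fibers} applied at $s = 0$ with $v_s(\tilde{\Delta}) = 8 - \deg(\Deltaconic) = n-1$. The paper treats this as an immediate application of Proposition \ref{determining_conic_fibers}, so your additional checks (the valuation computation and the compatibility of the chart change with $\varphi$) are just the routine details it leaves implicit.
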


\begin{remark}
   Notice that we can recover Proposition \ref{conic_bundle_configuration} by counting the number of components of each reducible fiber in Propositions \ref{determining_conic_fibers} and \ref{fiber_at_infinity}.  
\end{remark}

\subsection{The defect of $X$ and the rank of $\cE$ over $\overline{k}$}\label{section:defect_rank}

Let $\cE$ be an elliptic curve over $k(T)$ defined by Equation \ref{equation1}, $\pi \colon X \to \PP^1$ its Kodaira--Néron model and $\varphi \colon X \to \PP^1$ the conic bundle defined by Equation \ref{equation2}. In what follows, we investigate when the rank $r$ of $\cE$ over $\overline{k}(T)$ is equal to the number $\delta$ of fibers of type $A_n$ of $\varphi$. We use the following definition. 

\begin{dfn}\label{definition:defect}
    The \textit{defect} of $\cE$ is defined as the number \begin{equation*}
        \Df(\cE) \colonequals \delta - r.
    \end{equation*}
\end{dfn}

By Proposition \ref{prop:defect is not negative}, $\Df(\cE) \geq 0$ for any $\cE$ defined by Equation \ref{equation1}.  Since the rank $r$ can be determined through a combination of Tate's Algorithm (\cite{tate1975algorithm}) and the Shioda--Tate formula (Corollary \ref{shiodatate}), and $\delta$ can be determined by Proposition \ref{determining_conic_fibers}, we can always calculate the defect of $\cE$.

\begin{example}
    Let $X$ be the rational elliptic surface and $\varphi \colon X \to \PP^1$ the conic bundle given by the following equation.
    \begin{equation*}
        y^2 = (x^2-1)T + x^3 - x + 4.
    \end{equation*}
    By Tate's algorithm, the elliptic fibration has a single reducible fiber of type $I_2^*$, thus by the Shioda--Tate formula, $r = 2$. Applying Proposition \ref{determining_conic_fibers}, $\varphi$ has two fibers of type $A_3$. Then, $\delta = 2$ and $\Df(\cE) = 0$.
\end{example}

In what follows, we apply the theory of Section \ref{section_CB_and_RES} to $\pi \colon X \to \PP^1$ and $\varphi \colon X \to \PP^1$. Let $G_{w_1}, \ldots, G_{w_{\varepsilon}}$ be the fibers of type $D_n$ for some $n \geq 3$. Since by Proposition \ref{fiber_at_infinity} the fiber at infinity is of type $D_n$, we assume without loss of generality that it is $G_{w_1}$. Let $(O)$ be the zero-section of $\pi \colon X \to \PP^1$. By Equation \ref{equation1}, $(O)$ is contained in $G_{w_1}$. Since $(O)^2 = -1$, we conclude that $(O) = \beta_{w_1,n_{w_1}\text{-}1}$.

\begin{prop}\label{conic_fiber_is_trivial}
    Let $G$ be the class of a fiber of $\varphi \colon X \to \PP^1$. Then, $G \in \Triv(\overline{X})$.
\end{prop}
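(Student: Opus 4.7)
The plan is to exhibit one specific fiber of $\varphi$ whose class is manifestly in $\Triv(\overline{X})$ and then invoke the fact that any two fibers of $\varphi$ are linearly equivalent, so they share the same class $G \in \NS(\overline{X})$. The natural choice is the fiber at infinity $G_{w_1}$, because the zero-section $(O)$ already sits inside it as one of its components, by the paragraph preceding the statement.

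By Proposition \ref{fiber_at_infinity}, $G_{w_1}$ is of type $D_n$ for some $n \geq 3$. This is automatic in our setting: since $\deg A, \deg B, \deg C \leq 3$ we have $\deg \Deltaconic \leq 6$, hence $v_s(\tilde{\Delta}) \geq 2$. Using Notation \ref{notation_CB_fibers} together with the identification $(O) = \beta_{w_1, n_{w_1}-1}$, the cycle $G_{w_1}$ decomposes as
\[
G_{w_1} \;=\; \beta_{w_1,0} \;+\; \beta_{w_1,1} \;+\; 2\sum_{j=2}^{n_{w_1}-2} \beta_{w_1,j} \;+\; 2(O),
\]
with the convention that the middle sum is empty when $n_{w_1}=3$.

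Next, I check that every summand in this decomposition lies in $\Triv(\overline{X})$. Each $\beta_{w_1,j}$ with $j \leq n_{w_1}-2$ is a rational $(-2)$-curve, so by Proposition \ref{results_RES}(v) (equivalently, by the legend of Table \ref{CBfibers_table}) it is a component of a reducible fiber of $\pi$, hence vertical with respect to $\pi$. The remaining summand $(O)$ is the zero-section. Therefore $[G_{w_1}]$ belongs to $\Triv(\overline{X})$, and since any two fibers of a morphism to $\PP^1$ are linearly equivalent, $G = [G_{w_1}]$ in $\NS(\overline{X})$, which completes the proof.

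The main point to handle carefully is the very first step, namely confirming that the fiber at infinity is genuinely reducible of type $D_n$ so that the decomposition above is even available. This is really where the degree hypotheses on the $a_i(T)$ enter, through the bound $\deg \Deltaconic \leq 6$ and hence $v_s(\tilde{\Delta}) \geq 2$. Once this is secured, the rest is bookkeeping with the shape of a $D_n$-fiber together with the interpretation of $(-2)$-curves from Proposition \ref{results_RES}(v).
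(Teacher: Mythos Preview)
Your proof is correct and follows essentially the same route as the paper: represent $G$ by the fiber at infinity $G_{w_1}$, use its $D_n$ decomposition with $(O)=\beta_{w_1,n_{w_1}-1}$, and observe that the remaining components are $(-2)$-curves hence vertical for $\pi$. The only difference is cosmetic---you spell out why $G_{w_1}$ is of type $D_n$ via the bound $\deg\Deltaconic\le 6$ and make the linear-equivalence-of-fibers step explicit, both of which the paper leaves implicit.
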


\begin{proof}
    The fiber class $G$ is represented by $G_{w_1}$, and we write
    \begin{equation*}
        G_{w_1} = \beta_{w_1,0} + \beta_{w_1,1} + 2\beta_{w_1,2} + \cdots + 2 \beta_{w_1, n_{\xi}-2} + 2(O). 
    \end{equation*}
    Since $\beta_{w_1,j}$ is a $(-2)$-curve for $j = 0,1, \ldots , n_{w_1}-2$, they are all fiber components of $\pi$. Therefore, $G_{w_1} \in \Triv(\overline{X})$.  
\end{proof}

By Proposition \ref{results_RES}, the $(-1)$-components of singular fibers of $\varphi$ are sections of $\pi$, so they correspond to $\overline{k}(T)$-points of $\cE$. We use the following notation according to the type of fiber of $\varphi$. For $G_{v_i}$ a fiber of type $A_{n_{v_i}}$, we write $\alpha_{v_i,n_{v_i}-1} = (P_i)$ and $\alpha_{v_i,0} = (P_i')$. For $G_{w_i}$ a fiber of type $D_{n_{w_i}}$, we write $\beta_{w_i,n_{w_i}-1} = (Q_i)$.

\begin{corollary}\label{corollary: points are inverse}
    In $\cE(\overline{k}(T))$, $P'_i = - P_i$ for every $i = 1, \ldots, \delta$, and $[2]Q_i = O$ for every $i = 1, \ldots, \varepsilon$.
\end{corollary}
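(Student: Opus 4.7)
The plan is to read each statement off the Shioda--Tate isomorphism (Theorem \ref{shioda-tate-isom}) by exhibiting the relevant combinations of sections as classes in the trivial lattice $\Triv(\overline{X})$. The key input is Proposition \ref{conic_fiber_is_trivial}, which tells us that the class $G$ of a fiber of $\varphi$ lies in $\Triv(\overline{X})$. Since $G_{v_i}$ and $G_{w_i}$ are each linearly equivalent to $G$, their entire fiber cycles lie in $\Triv(\overline{X})$. Using this, I will isolate the contributions of the $(-1)$-components and argue that the remaining summands are already trivial.

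For a fiber $G_{v_i}$ of type $A_n$, the expansion in Notation \ref{notation_CB_fibers} gives
\begin{equation*}
G_{v_i} = (P'_i) + \alpha_{v_i,1} + \cdots + \alpha_{v_i,n_{v_i}-2} + (P_i).
\end{equation*}
By Notation \ref{notation_CB_fibers}, each $\alpha_{v_i,j}$ with $1 \leq j \leq n_{v_i}-2$ is a $(-2)$-curve, hence (by Proposition \ref{results_RES}(v)) a fiber component of $\pi$ and a class in $\Triv(\overline{X})$. Combined with $[G_{v_i}] \in \Triv(\overline{X})$, this forces $(P_i) + (P'_i) \in \Triv(\overline{X})$. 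Under the Shioda--Tate isomorphism, $(P) \pmod{\Triv(\overline{X})}$ corresponds to $P \in \cE(\overline{k}(T))$ and is a group homomorphism, so $P_i \oplus P'_i = O$, i.e.\ $P'_i = -P_i$.

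For a fiber $G_{w_i}$ of type $D_n$, the analogous expansion reads
\begin{equation*}
G_{w_i} = \beta_{w_i,0} + \beta_{w_i,1} + 2\beta_{w_i,2} + \cdots + 2\beta_{w_i,n_{w_i}-2} + 2(Q_i).
\end{equation*}
All the $\beta_{w_i,j}$ with $j \leq n_{w_i}-2$ are $(-2)$-curves by Notation \ref{notation_CB_fibers}, hence fiber components of $\pi$ in $\Triv(\overline{X})$. Since also $[G_{w_i}] \in \Triv(\overline{X})$, we deduce $2(Q_i) \in \Triv(\overline{X})$, and the Shioda--Tate isomorphism gives $[2]Q_i = O$.

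There is no serious obstacle; the whole argument is a bookkeeping of fiber components against the trivial lattice. The one point worth flagging is that for $i = 1$ the fiber $G_{w_1}$ at infinity contains the zero-section, i.e.\ $(Q_1) = (O)$, so the conclusion $[2]Q_1 = O$ is a tautology; the genuine content of the $D_n$ case appears only for $i \geq 2$, where $(Q_i)$ is a section disjoint from $(O)$ and hence does not already lie in $\Triv(\overline{X})$.
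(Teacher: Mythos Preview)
Your proof is correct and follows essentially the same approach as the paper: write each conic fiber as a sum of its components, observe that the $(-2)$-components already lie in $\Triv(\overline{X})$, invoke Proposition \ref{conic_fiber_is_trivial} to conclude that the sum of the $(-1)$-components is trivial, and read off the result via Theorem \ref{shioda-tate-isom}. Your additional remark about the $i=1$ case for $G_{w_1}$ being tautological is a nice clarification not present in the paper.
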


\begin{proof}
    We can write $G_{v_i} \equiv (P_i)+(P'_i) \mod \Triv(\overline{X})$, since $\alpha_{v_i,j}$ is a $(-2)$-curve for $j = 1,\ldots, n_{i}-2$. Therefore, by Proposition \ref{conic_fiber_is_trivial}, $(P_i)+(P'_i) \in \Triv(\overline{X})$.  Under the isomorphism in Theorem \ref{shioda-tate-isom}, we deduce $P_i \oplus P'_i = O$. Doing the same for $G_{w_i}$, we have $2(Q_i) \in \Triv(\overline{X})$, so $[2]Q_i = O$.
\end{proof}

Notice that Corollary \ref{corollary: points are inverse} agrees with the explicit expression for $\overline{k}(T)$-points induced by roots of $\Deltaconic(x)$. Indeed, if $G_{v_i}$ is a fiber of type $A_n$, then by Proposition \ref{determining_conic_fibers}, it is equal to $G_{\theta}$ for some $\theta \in \overline{k}$ such that $A(\theta)$ or $C(\theta)$ are non-zero. By Equation \ref{points from CB}, the points $P_{\theta}$ and $-P_{\theta}$ are on the line $x = \theta$, so they correspond to the $(-1)$-components of $G_{v_i}$. Without loss of generality we can assume $P_i$ corresponds to $P_{\theta}$. Similarly, if $G_{w_i}$ is a fiber of type $D_n$, then it is induced by a root $\theta$ of $\Deltaconic$ such that $A(\theta) = C(\theta) = 0$, so by Equation \ref{points from CB}, $P_{\theta}$ is a point of 2-torsion in $\cE(\overline{k}(T))$ corresponding to $Q_i$. 

\begin{dfn}\label{def_SML}
    Let $S$ be the set $\{P_1, \ldots, P_{\delta}\}$, $M$ the subgroup of $\cE(\overline{k}(T))$ generated by $S$ and $L$ the free $\ZZ$-module generated over $S$.
\end{dfn}
There is a natural surjection $\psi \colon L \to M$ and we have an exact sequence

\begin{equation}\label{exact_sequence}
    0 \to \ker \psi  \to L \xrightarrow{\psi} M \to 0\text{.}
\end{equation}

The elements of $\ker \psi$ are equivalent to linear relations between $P_1, \ldots, P_{\delta}$ in $\cE(\overline{k}(T))$.

\begin{thm}\label{theorem:kernel_equals_defect}
    The rank of $\ker \psi$ as a $\ZZ$-module is equal to $\Df(\cE)$.
\end{thm}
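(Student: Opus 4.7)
The plan is to combine the exact sequence (\ref{exact_sequence}) with the Shioda--Tate isomorphism (Theorem \ref{shioda-tate-isom}) to reduce the claim to a spanning statement in $\NS(\overline{X}) \otimes \QQ$. Since $L$ is free of rank $\delta$, the sequence gives $\rank(\ker \psi) = \delta - \rank(M)$, so it suffices to prove $\rank(M) = r$. Because $M \subseteq \cE(\overline{k}(T))$ and the latter has rank $r$, only the lower bound $\rank(M) \geq r$ requires work. Equivalently, I want to show that the divisor classes $(P_1), \ldots, (P_\delta)$ together with $\Triv(\overline{X})$ span $\NS(\overline{X}) \otimes \QQ$, for then the Shioda--Tate quotient forces $M \otimes \QQ = \cE(\overline{k}(T)) \otimes \QQ$.

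To verify this spanning statement I would use the explicit basis $\cB$ of $\NS(\overline{X})$ from Proposition \ref{prop_basis_canonical}(i) and check each element separately. The fiber class $G$ is in $\Triv(\overline{X})$ by Proposition \ref{conic_fiber_is_trivial}. By Proposition \ref{results_RES}(v) every $(-2)$-curve in $\overline{X}$ is a component of a fiber of $\pi$, hence lies in $\Triv(\overline{X})$ (the components not meeting $(O)$ are there by definition, and the identity component $\Theta_0$ of a reducible fiber $F_u = \Theta_0 + \sum_{i \geq 1}\mu_i \Theta_i$ is recovered from the relation $F \equiv F_u$ in $\NS$). This disposes of the basis elements $\alpha_{v_i,j}$ with $1 \leq j \leq n_{v_i}-2$ and $\beta_{w_i,j}$ with $1 \leq j \leq n_{w_i}-2$. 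The remaining $\alpha_{v_i, n_{v_i}-1}$ is the section $(P_i)$, which belongs to the spanning set by definition, while $\beta_{w_i, n_{w_i}-1} = (Q_i)$ satisfies $[2]Q_i = O$ by Corollary \ref{corollary: points are inverse}, so $2(Q_i) \in \Triv(\overline{X})$ and hence $(Q_i) \in \Triv(\overline{X}) \otimes \QQ$.

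The only remaining basis element is $H$, a section of $\varphi$ a priori unrelated to the conic bundle points; handling $H$ is the main obstacle. To bring it under control I would combine the canonical divisor formula of Proposition \ref{prop_basis_canonical}(ii) with the identity $K_X = -F$ from Proposition \ref{results_RES}(iii), and solve for $2H$. The resulting expression is a $\ZZ$-linear combination of $F$, $G$, the various $\alpha_{v_i,j}$, and the various $\beta_{w_i,j}$; by the previous paragraph each of these lies in $\text{span}_\QQ\{(P_1), \ldots, (P_\delta)\} + \Triv(\overline{X}) \otimes \QQ$, so $2H$ does as well, and dividing by $2$ places $H$ in the same span. Hence every vector in $\cB$ lies in $\text{span}_\QQ\{(P_1), \ldots, (P_\delta)\} + \Triv(\overline{X}) \otimes \QQ$, which therefore equals $\NS(\overline{X}) \otimes \QQ$. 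Projecting through Shioda--Tate, the points $P_1, \ldots, P_\delta$ span $\cE(\overline{k}(T)) \otimes \QQ$, yielding $\rank(M) = r$ and therefore $\rank(\ker \psi) = \delta - r = \Df(\cE)$.
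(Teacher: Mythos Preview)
Your argument is correct. It takes a genuinely different route from the paper's proof, so a short comparison is in order.

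Both proofs begin with the observation $\rank(\ker\psi)=\delta-\rank M$ and the trivial bound $\rank M\le r$; the substance lies in the reverse inequality. The paper argues ``downwards'': it invokes Proposition~\ref{silverman_vertical_divisors} to turn each of the $z=\rank(\ker\psi)$ independent relations into an element of $\Triv(\overline{X})$, exhibits a list of $2+\sum(n_{v_i}-2)+\sum(n_{w_i}-1)+z$ divisors in $\Triv(\overline{X})$, verifies their independence via the basis $\cB$, and then bounds this count by $\rank\Triv(\overline{X})$ using Lemma~\ref{lemma:defect}. You instead argue ``upwards'': you show directly that $\Triv(\overline{X})\otimes\QQ$ together with the $(P_i)$ already span $\NS(\overline{X})\otimes\QQ$, by running through the generators in $\cB$. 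The one nontrivial generator is the section $H$ of the conic bundle, and your idea of eliminating it via the identity $K_X=-F$ combined with the explicit expression in Proposition~\ref{prop_basis_canonical}(ii) is exactly what makes this work.

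What each approach buys: the paper's counting argument stays at the level of $\Triv(\overline{X})$ and recycles Lemma~\ref{lemma:defect}, but it needs Proposition~\ref{silverman_vertical_divisors} as an external input. Your approach bypasses that proposition entirely and instead puts the canonical divisor formula of Proposition~\ref{prop_basis_canonical}(ii) to real use (the paper's own proof only needs part~(i)). In effect you prove Corollary~\ref{coro:finite index subgroup} first and deduce Theorem~\ref{theorem:kernel_equals_defect} from it, inverting the paper's logical order. Your route is arguably the more transparent of the two.
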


\begin{proof}
    Let $z = \rank(\ker \psi)$. By the exact sequence (\ref{exact_sequence}), we know $z = \delta - \rank M$. Since $M$ is a submodule of $\cE(\overline{k}(T))$, $\rank M \leq r$. Therefore, by Definition \ref{definition:defect},
        \begin{equation}\label{z_geq_Df}
            z \geq \delta - r = \Df(\cE).
        \end{equation}
        
    There are $z$ independent linear relations
    \begin{center}
    $\begin{matrix}
        [a_{1,1}]P_1 & \oplus \cdots \oplus & [a_{1,\delta}]P_{\delta} &= O,\\
        \vdots & & & \\
        [a_{z,1}]P_1 & \oplus \cdots \oplus & [a_{z,\delta}]P_{\delta} &= O.
    \end{matrix}$
    \end{center}

    Let $a_i = \sum_{j=1}^{\delta} a_{i,j}$. By Proposition \ref{silverman_vertical_divisors}, each linear relation corresponds to an independent vertical divisor $a_{i,1}(P_1) + \ldots + a_{i,\delta}(P_{\delta}) - a_i(O)$. We use these divisors to write a set of independent divisors of $\Triv(\overline{X})$:
    \begin{align*}
        &F, (O),\\
        &\alpha_{v_1,1}, \ldots, \alpha_{v_1,n_{v_1}\text{-}2}, \ldots, \alpha_{v_{\delta},1}, \ldots, \alpha_{v_{\delta},n_{v_{\delta}}\text{-}2},\\
        &\beta_{w_1,0}, \ldots, \beta_{w_1,n_{w_1}\text{-}2}, \ldots, \beta_{w_{\varepsilon},0}, \ldots, \beta_{w_{\varepsilon},n_{w_{\varepsilon}}\text{-}2},\\
        &a_{1,1}(P_1) + \cdots + a_{1,\delta}(P_{\delta}) -a_1(O),\\
        &\vdots\\
        &a_{z,1}(P_1) + \cdots + a_{z,\delta}(P_{\delta}) -a_{z}(O).
    \end{align*}
    We can check that the divisors above are linearly independent by writing them in terms of the basis $\cB$ in Proposition \ref{prop_basis_canonical}. By Proposition \ref{results_RES}, $F = -K_X$, and we write $K_X$ in basis $\cB$ in Proposition \ref{prop_basis_canonical}. The components $\alpha_{v_i,j}$ and $\beta_{w_i,j}$ are generators of $\cB$ for $j \geq 1$, and we can write
    \begin{equation*}
        \beta_{w_i, 0} = G - \beta_{w_i,1} - 2\beta_{w_i,2} - \cdots - 2\beta_{w_i,n_{w_i}\text{-}1}.
    \end{equation*}
    Finally, we use the correspondences $(O) = \beta_{w_1,n_{w_1}\text{-}1}$ and $(P_i) = \alpha_{v_i,n_{v_i}\text{-}1}$ for the rest of the divisors. Therefore, we have a total of $2 + \sum_{i=1}^{\delta}(n_{v_i}-2) + \sum_{i=1}^{\varepsilon}(n_{w_i}-1) + z$ independent divisors in $\Triv(\overline{X})$. Thus, we have
    \begin{equation*}
      2 + \sum_{i=1}^{\delta}(n_{v_i}-2) + \sum_{i=1}^{\varepsilon}(n_{w_i}-1) + z \leq \rank(\Triv(\overline{X})) = 2 + \sum_{v \in \PP^1}(m_v -1).
    \end{equation*}
    By Lemma \ref{lemma:defect}, we conclude
    \begin{equation}\label{z_leq_Df}
        z \leq \Df(\cE).
    \end{equation}
    By the inequalities (\ref{z_geq_Df}) and (\ref{z_leq_Df}), we conclude that $z = \Df(\cE)$.
\end{proof}

A direct consequence of the previous theorem is the following.

\begin{corollary}\label{coro:finite index subgroup}
    The points $P_1, \ldots, P_{\delta}$ determined by the conic bundle $\varphi \colon X \to \PP^1$ generate a finite index subgroup of $\cE(\overline{k}(T))$.
\end{corollary}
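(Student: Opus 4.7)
The plan is to deduce this directly from Theorem \ref{theorem:kernel_equals_defect} together with the exact sequence (\ref{exact_sequence}) and the Lang--Néron theorem. The strategy is to compute the rank of $M$ and compare it with the rank $r$ of $\cE(\overline{k}(T))$; once these agree, finite index follows from finite generation of the ambient group.

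First I would recall that by the Lang--Néron theorem, $\cE(\overline{k}(T))$ is a finitely generated abelian group of rank $r$. Since $L$ is free of rank $\delta$, applying the rank function to the short exact sequence
\begin{equation*}
    0 \to \ker \psi \to L \xrightarrow{\psi} M \to 0
\end{equation*}
yields $\rank M = \delta - \rank(\ker \psi)$. Theorem \ref{theorem:kernel_equals_defect} gives $\rank(\ker \psi) = \Df(\cE) = \delta - r$, so $\rank M = \delta - (\delta - r) = r$.

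Finally, since $\cE(\overline{k}(T))$ is finitely generated and $M \subseteq \cE(\overline{k}(T))$ is a subgroup of the same rank $r$, the quotient $\cE(\overline{k}(T))/M$ is a finitely generated abelian group of rank $0$, hence torsion, hence finite. This is exactly the statement that $M$ is of finite index in $\cE(\overline{k}(T))$, which concludes the proof.

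There is essentially no serious obstacle here: the entire content has already been absorbed into Theorem \ref{theorem:kernel_equals_defect}, and the corollary is a bookkeeping consequence together with the standard fact that in a finitely generated abelian group a subgroup of full rank has finite index. The only point that merits mentioning explicitly is the invocation of Lang--Néron to guarantee that $\cE(\overline{k}(T))$ is finitely generated (and not merely finitely generated modulo torsion of unbounded order), so that the passage from ``full rank'' to ``finite index'' is legitimate.
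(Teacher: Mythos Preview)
Your proposal is correct and is exactly the argument the paper has in mind: the paper simply labels the corollary ``a direct consequence of the previous theorem'' without writing out a proof, and your computation $\rank M = \delta - \rank(\ker\psi) = \delta - \Df(\cE) = r$ together with Lang--N\'eron is precisely how one makes that direct consequence explicit.
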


\subsection{Bounds on the rank of $\cE$ over $k$}\label{section:rank k bounds}

So far, we have worked over the algebraic closure $\overline{k}$ of the field $k$ over which $\pi \colon X \to \PP^1$ and $\varphi \colon X \to \PP^1$ are defined. In Section \ref{section:defect_rank}, we have studied the relation between $r$ and $\delta$. In order to study the rank $r_k$ of $\cE(k(T))$, we define a number $\delta_k$ which will play a similar part.

\begin{dfn}\label{definition: delta_k}
    Let $\cE$ be a curve given by Equation \ref{equation2}. We define $\delta_k$ as
    \begin{equation*}
        \delta_k = \#\Bigl\{[\theta] : \, \Deltaconic(\theta) = 0, \  \begin{array}{l}
            A(\theta)\text{ is a nonzero square in }k(\theta) \text{ or}\\
            A(\theta) = 0 \text{ and } C(\theta) \text{ is a nonzero square in }k(\theta)
        \end{array}\Bigr\},
    \end{equation*}
    where $[\theta]$ denotes the orbit of $\theta$ by the action of $\Galk$.
\end{dfn}

\begin{remark}
    Notice that by Definition \ref{definition: delta_k}, we can rewrite the result of Theorems \ref{rank A=0} and \ref{rank deg(A) = 0} as follows. If $\cE$ is a curve given by an equation of the form \ref{equation_BBD} and $\deg(A) = 0$, then
    
    \begin{equation*}
        r_{\QQ} = \begin{cases}
            \delta_{\QQ} - 1 & \textit{if $A(x) = \mu \in \QQ^2 \setminus \{0\}$,}\\
            \hfil \delta_{\QQ} & \textit{otherwise.}
        \end{cases} 
    \end{equation*}
\end{remark}

We know $\Galk$ acts on $\NS(\overline{X})$ preserving the intersection product. In particular, any automorphism $\sigma \in \Galk$ sends a $(-1)$-component of a fiber of type $A_n$ to another $(-1)$-component of a fiber of type $A_n$. Thus, for any $P_i \in S$, we have $\sigma(P_i) = \pm P_j$. For each $P_i \in S$, let $[P_i]$ be the orbit of $P_i$ by the action of $\Galk$. The point $\sum_{P \in [P_i]}P$ is invariant under $\Galk$, so it is a $k(T)$-point of $\cE$.

\begin{dfn}\label{definition: SML_k}
    Let $[\theta_i]$ be an orbit in the set
    \begin{equation*}
         \#\Bigl\{[\theta] : \, \Deltaconic(\theta) = 0, \  \begin{array}{l}
            A(\theta)\text{ is a nonzero square in }k(\theta) \text{ or}\\
            A(\theta) = 0 \text{ and } C(\theta) \text{ is a nonzero square in }k(\theta)
        \end{array}\Bigr\}.
    \end{equation*}
    Choose an element $\theta_i' \in [\theta_i]$, and let
    \begin{equation*}
        \Sigma_i = \sum_{P \in [P_{\theta_i'}]} P
    \end{equation*}
    We define the set $S_k \colonequals \{ \Sigma_1, \ldots, \Sigma_{\delta_k}\}$. Further, we define $M_k$ as the subgroup of $\cE(k(T))$ generated by $S_k$, and $L_k$ as the free $\ZZ$-module over $S_k$.
\end{dfn}

\begin{remark}\label{remark: choice theta_i}
    On Definition \ref{definition: SML_k}, different choices of $\theta_i' \in [\theta_i]$ may lead to different results for $\Sigma_i$. Specifically, let $\theta_i', \theta_i'' \in [\theta_i]$ and assume that $\sigma(P_{\theta_i'}) = -P_{\theta_i''}$ for some $\sigma \in \Galk$. Then, $\sum_{P \in [P_{\theta_i'}]} P = - \sum_{P \in [P_{\theta_i''}]} P$. Notice that any linear combination $[n_1]\Sigma_1 \oplus \ldots \oplus [n_{\delta_k}]\Sigma_{\delta_k}$ will induce an equivalent linear combination for any other choice of $\theta_i' \in [\theta_i]$, switching the sign of $n_i$ if necessary.
\end{remark}

Before our main Theorem, we prove the following Lemma on the subgroup $M_k$ of $\cE(k(T))$.

\begin{lemma}\label{lemma_Mk}
    Let $M^{\cG}$ be the subgroup of $M$ (see Definition \ref{def_SML}) invariant by $\Galk$. Then, $M^{\cG} = M_k$.
\end{lemma}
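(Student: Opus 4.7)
The plan is to prove both containments, with most of the work going into the reverse direction.

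For $M_k \subseteq M^{\cG}$: by Definition \ref{definition: SML_k}, each generator $\Sigma_i = \sum_{P \in [P_{\theta_i'}]} P$ is by construction a sum over a complete $\Galk$-orbit, hence fixed by $\Galk$, and each summand equals $\pm P_j$ for some $P_j \in S$, so $\Sigma_i \in M$. Consequently $\Sigma_i \in M^{\cG}$ and $M_k = \langle S_k \rangle \subseteq M^{\cG}$.

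For $M^{\cG} \subseteq M_k$, take $R \in M^{\cG}$, write $R = \sum_{j=1}^{\delta}[n_j]P_j$, and decompose according to the $\Galk$-orbits of $\Theta = \{\theta_1, \ldots, \theta_{\delta}\}$. The key observation is that for any $\sigma \in \Galk$ one has $\sigma(P_j) = \pm P_{j'}$ with $j'$ determined by $\sigma(\theta_j) = \theta_{j'}$, so each $\Galk$-orbit $O \subseteq \Theta$ gives a $\Galk$-stable piece $R_O := \sum_{\theta_j \in O}[n_j]P_j$. I would classify each orbit as Type 1 (the points $P_j$ with $\theta_j \in O$ are $k(\theta_j)$-rational, which is precisely the square condition of Definition \ref{definition: delta_k}) or Type 2 (otherwise); on a Type 2 orbit the $\Galk$-orbit of any $P_j$ in $\cE(\overline{k}(T))$ contains both $+P_{j'}$ and $-P_{j'}$ for each $\theta_{j'} \in O$, so the corresponding orbit sum vanishes. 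For each Type 1 orbit the action on $\{P_j : \theta_j \in O\}$ is by unsigned transitive permutations, so Galois-invariance of $R$ forces the coefficients $n_j$ to agree across $O$, and $R_O$ becomes an integer multiple of the corresponding $\Sigma_i \in S_k$. For each Type 2 orbit, the existence of some $\sigma \in \mathrm{Stab}_{\Galk}(\theta_j)$ with $\sigma(P_j) = -P_j$ combined with a short Galois-averaging argument forces $R_O = 0$. Summing over orbits yields $R \in M_k$.

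The main obstacle is that the representation $R = \sum_j[n_j]P_j$ need not be unique once $\Df(\cE) > 0$, since by Theorem \ref{theorem:kernel_equals_defect} the $P_j$ then satisfy nontrivial relations coming from vertical divisors. To handle this I would work on the free cover $L$ of Definition \ref{def_SML}, equipping it with the signed $\Galk$-action $\sigma \cdot P_j := \epsilon P_{j'}$ (where $\sigma(P_j) = \epsilon P_{j'}$) that makes $\psi : L \to M$ equivariant, carry out the orbit analysis on $L$ to establish $\psi(L^\cG) = M_k$, and verify that every element of $M^\cG$ lifts to a $\Galk$-invariant element of $L$; the latter reduces to checking that $\ker(\psi)$, generated by $\Galk$-permuted vertical divisor relations from Theorem \ref{theorem:kernel_equals_defect}, is a signed-permutation $\Galk$-module whose first cohomology vanishes, which is the technical crux of the argument.
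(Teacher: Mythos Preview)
Your orbit decomposition and Type 1/Type 2 split is precisely the paper's argument: it too exhibits, for each $\theta$ with $A(\theta)$ (or $C(\theta)$) not a square in $k(\theta)$, the sign-flip automorphism $\sqrt{A(\theta)}\mapsto -\sqrt{A(\theta)}$ and concludes $n_i=0$, and for the remaining $\theta$ equalises coefficients across the Galois orbit to recognise a multiple of the corresponding $\Sigma_i$. The paper, however, simply treats the coefficients $n_i$ in a chosen expression $[n_1]P_1\oplus\cdots\oplus[n_\delta]P_\delta$ as if they were well-defined and never mentions the non-uniqueness you flag; in that respect you are being more careful than the source.

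The genuine gap lies in your last paragraph. First, $\ker\psi$ is merely a Galois-stable sublattice of the signed-permutation lattice $L$; there is no reason it should itself be a signed-permutation module, since the relations supplied by Theorem \ref{theorem:kernel_equals_defect} are arbitrary integer vectors in $L$, not induced from sign characters of subgroups. Second, even honest signed-permutation modules need not have vanishing $H^1$: for a Type-2 orbit $O$ one has $L_O \cong \mathrm{Ind}_{H}^{\cG}\ZZ^{-}$ with $H=\Gal(\overline{k}/k(\theta))$ acting through the quadratic character of $k(\theta,\sqrt{A(\theta)})/k(\theta)$, and Shapiro's lemma gives $H^1(\cG,L_O)\cong H^1(H,\ZZ^{-})\cong \ZZ/2\ZZ$. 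So neither premise of your ``technical crux'' is secured, and the surjectivity of $L^{\cG}\to M^{\cG}$ (hence $M^{\cG}\subseteq M_k$) remains open. If your aim is only the downstream application---Theorem \ref{theorem_rank_k} uses the lemma solely via $\rank M_k=r_k$---there is a painless bypass: tensor with $\QQ$. The $\cG$-action factors through a finite quotient, so taking invariants in $0\to\ker\psi\otimes\QQ\to L\otimes\QQ\to M\otimes\QQ\to 0$ stays exact; since $(L\otimes\QQ)^{\cG}$ is spanned by the Type-1 sums $\Sigma_i$, its image $(M\otimes\QQ)^{\cG}$ equals $M_k\otimes\QQ$, giving $\rank M_k=\rank M^{\cG}$ without any integral cohomology.
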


\begin{proof}
    By definition, $M_k$ is a subgroup of $M$ invariant by Galois, so $M_k \subset M^{\cG}$. Let $[n_1]P_1 \oplus \ldots \oplus [n_{\delta}]P_{\delta} \in M^{\cG}$. For each $i = 1, \ldots, \delta$, the point $P_i$ is equal to $P_{\theta}$ (see \ref{points from CB}) for some $\theta \in \overline{k}$ such that $\Deltaconic(\theta) = 0$, and one of $A(\theta)$ and $C(\theta)$ is nonzero. Assume $A(\theta) \neq 0$ is not a square in $k(\theta)$. Then,
     \begin{equation*}
        P_i = \bigl(\theta, \sqrt{A(\theta)} \bigl(T - \tfrac{B(\theta)}{2A(\theta)}\bigr)\bigr),
    \end{equation*}
    and the automorphism $\sqrt{A(\theta)} \mapsto -\sqrt{A(\theta)}$ takes $P_i$ to $-P_i$. Similarly, if $A(\theta) = 0$ and $C(\theta)$ is not a square in $k(\theta)$, $\sqrt{C(\theta)} \mapsto - \sqrt{C(\theta)}$ takes $P_i$ to $-P_i$. Since we assume $[n_1]P_1 \oplus \ldots \oplus [n_{\delta}]P_{\delta}$ is invariant under the action of $\Galk$, we conclude $n_i = 0$.

    Now, assume $A(\theta)$ is a nonzero square in $k(\theta)$ or $A(\theta) = 0$ and $C(\theta)$ is a nonzero square in $k(\theta)$. Then, for each $P \in [P_i]$ distinct from $P_i$, we know $P = \pm P_j$ for some $j \neq i$. If $P_j \in [P_i]$, then $n_i = n_j$, and if $- P_j \in [P_i]$, then $n_i = -n_j$. In both cases, either $(\sum_{P \in [P_i]} P) \in S_k$ or $-(\sum_{P \in [P_i]} P) \in S_k$ (see Remark \ref{remark: choice theta_i}). Thus, $[n_1]P_1\oplus \ldots \oplus [n_{\delta}] P_{\delta} \in M_k$, and we conclude $M_k = M^{\cG}$. 
\end{proof}

There is a natural surjection $\psi_k \colon L_k \to M_k$, and we have an exact sequence
    \begin{equation}\label{exact_sequence_k}
        0 \to \ker \psi_k \to L_k \xrightarrow{\psi_k} M_k \to 0\text{.}
    \end{equation}

We use this exact sequence to prove our main result.

\begin{thm}\label{theorem_rank_k}
    Let $r_k$ be the rank of $\cE(k(T))$. Then, $\delta_k \geq r_k \geq \delta_k - \Df(\cE)$.
\end{thm}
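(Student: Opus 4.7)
The strategy is to read both inequalities off the exact sequence (\ref{exact_sequence_k}), which already gives
\begin{equation*}
    \rank M_k \;=\; \delta_k - \rank \ker \psi_k.
\end{equation*}
Hence it suffices to establish (a) $\rank M_k = r_k$, which yields the upper bound, and (b) $\rank \ker \psi_k \leq \Df(\cE)$, which yields the lower bound.

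For (a), I would combine Lemma \ref{lemma_Mk} with Corollary \ref{coro:finite index subgroup}. The latter says $M$ has finite index in $\cE(\overline{k}(T))$. Since $\cE(\overline{k}(T))$ is finitely generated by Lang--N\'eron, the action of $\Galk$ factors through a finite quotient $\cG$. Taking $\cG$-invariants is exact after tensoring with $\QQ$ (group cohomology of a finite group with $\QQ$-vector space coefficients vanishes in positive degree), so
\begin{equation*}
    M_k \otimes \QQ \;=\; M^{\cG} \otimes \QQ \;=\; \cE(\overline{k}(T))^{\cG} \otimes \QQ \;=\; \cE(k(T)) \otimes \QQ,
\end{equation*}
the first equality being Lemma \ref{lemma_Mk}. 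In particular $\rank M_k = r_k$, which gives $r_k \leq \delta_k$.

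For (b), I would construct a $\ZZ$-linear injection $\iota \colon L_k \hookrightarrow L$ compatible with the two surjections $\psi_k$ and $\psi$. With the choices of $\theta_i'$ fixed as in Definition \ref{definition: SML_k}, each $\Sigma_i$ is by construction a sum over a single $\Galk$-orbit inside $\{\pm P_1, \ldots, \pm P_\delta\}$; expand this as $\Sigma_i = \sum_j \epsilon_{ij} P_j$ with $\epsilon_{ij} \in \{-1, 0, 1\}$ in $\cE(\overline{k}(T))$, and define $\iota(\Sigma_i)$ to be the same formal $\ZZ$-linear combination in $L$. The key observation is that distinct orbits of the chosen $P_{\theta_i'}$'s involve disjoint subsets of indices $j$, so the supports of $\iota(\Sigma_1), \ldots, \iota(\Sigma_{\delta_k})$ in $\{P_1, \ldots, P_\delta\}$ are pairwise disjoint, forcing $\iota$ to be injective. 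By construction $\psi \circ \iota = \psi_k$, so $\iota$ restricts to an injection $\ker \psi_k \hookrightarrow \ker \psi$. Applying Theorem \ref{theorem:kernel_equals_defect} gives $\rank \ker \psi_k \leq \rank \ker \psi = \Df(\cE)$, so $r_k \geq \delta_k - \Df(\cE)$.

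The point requiring the most care is the sign bookkeeping in defining $\iota$, since the elements of a Galois orbit appear in $L$ as either $P_j$ or $-P_j$ according to the action (cf.\ Remark \ref{remark: choice theta_i}); once this is handled the disjointness of orbits makes injectivity automatic and $\psi \circ \iota = \psi_k$ is immediate. Step (a) is then a standard finite-index plus finite-Galois argument.
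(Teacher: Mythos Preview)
Your proposal is correct and follows the same approach as the paper's proof: both extract the inequalities from the exact sequence (\ref{exact_sequence_k}) by showing $\rank M_k = r_k$ via Lemma \ref{lemma_Mk} and Corollary \ref{coro:finite index subgroup}, and $\rank \ker \psi_k \leq \Df(\cE)$ via an injection into $\ker \psi$ together with Theorem \ref{theorem:kernel_equals_defect}. The paper's proof is terser---it asserts that $M_k$ has finite index in $\cE(k(T))$ and that ``each linear relation between points of $S_k$ is a linear relation between points of $S$'' without further comment---whereas you spell out the finite-Galois/$\QQ$-cohomology argument for (a) and the explicit disjoint-support injection $\iota$ for (b), which makes the sign issue from Remark \ref{remark: choice theta_i} transparent.
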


\begin{proof}
    By the exact sequence \ref{exact_sequence_k}, we have $\delta_k = \rank M_k + \rank(\ker \psi_k)$. Since by Lemma \ref{lemma_Mk} $M_k$ is a finite index subgroup of $E(k(T))$, we know $\rank(M_k) = r_k$. On the other hand, each linear relation between points of $S_k$ is a linear relation between points of $S$, so by Theorem \ref{theorem:kernel_equals_defect}, we have $\rank(\ker \psi_k) \leq \Df(\cE)$. This proves the result.
\end{proof}

\begin{remark}
    \sloppy Notice Theorem \ref{theorem_rank_k} is a generalization of Theorem \ref{rank deg(A) > 0} to the context of any number field $k$, and allowing $a_3(T) \neq 1$ in Equation \ref{equation1}.
\end{remark}

\section{Computations of the rank}\label{section: computations of the rank}

Let $\cE$ be a curve given by Equation \ref{equation1}, $\pi \colon X \to \PP^1$ its Kodaira--Néron model and $\varphi \colon X \to \PP^1$ the conic bundle induced by Equation \ref{equation2}.
In general, Theorem \ref{theorem_rank_k} shows that calculating $\delta_k$ determines a range of possible values for $r_k$, but not $r_k$ itself. In this section, we explore cases in which we can determine $r_k$ explicitly. Specifically, we recover Theorems \ref{rank A=0} and \ref{rank deg(A) = 0} in the more general context of number fields.

\subsection{Computation of $\Df(\cE)$}\label{section: computation of defect}

Let $G_{\infty}$ be the fiber at infinity of the conic bundle ${\varphi \colon X \to \PP^1}$. By Proposition \ref{fiber_at_infinity}, $G_{\infty}$ is of type $D_n$, for $n \geq 3$. If $n=3$, then there are 2 distinct reducible fibers of $\pi$ with components in common with $G_{\infty}$ (see \cite[Theorem 5.2]{renato}). If $n \geq 4$, only one reducible fiber has a component in common with $G_{\infty}$. In what follows, we prove that the type of $G_{\infty}$ and the Kodaira types of the fibers of $\pi$ with components in common with $G_{\infty}$ are sufficient for determining the defect $\Df(\cE)$.

\begin{thm}\label{theorem: Determine Df}
    Let $\cE$ be a curve given by Equation \ref{equation1}, $\pi \colon X \to \PP^1$ its Kodaira--Néron model and $\varphi \colon X \to \PP^1$ the induced conic bundle. Let $G_{\infty}$ be the fiber at infinity of $\varphi$ of type $D_n$.
    \begin{itemize}
        \item[i)] If $n=3$, then $\Df(\cE)$ is equal to the number of fibers of type $IV$ or $I_m$, $m \geq 3$, which have an irreducible component in common with $G_{\infty}$.
        \item[ii)] If $n \geq 4$, then $\Df(\cE) \leq 1$. Let $F_a$ be the fiber of $\pi$ with components in common with $G_{\infty}$. In particular, $\Df(\cE) = 1$ in three cases:
        \begin{itemize}
            \item[1.] if $G_{\infty}$ is of type $D_4$ and $F_a$ of type $I_m$ for $m \geq 5$;
            \item[2.] if $G_{\infty}$ is of type $D_5$ and $F_a$ of type $I_1^*$;
            \item[3.] if $G_{\infty}$ is of type $D_6$ and $F_a$ of type $IV^*$.
        \end{itemize}
        For all other configurations of $G_{\infty}$ and $F_a$, $\Df(\cE) = 0$.
    \end{itemize}
\end{thm}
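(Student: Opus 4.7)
The proof begins from Lemma~\ref{lemma:defect}, which expresses
\[
\Df(\cE) = \sum_{v \in \PP^1} (m_v - 1 - \ell_v),
\]
so the task reduces to determining, for each reducible elliptic fiber $F_v$, how many of its $(-2)$-components lie in some conic fiber of $\varphi$. Because distinct fibers of $\varphi$ are disjoint, each $(-2)$-curve of $X$ belongs to at most one conic fiber, so the bookkeeping is clean.

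The first main step is a local analysis of where the $(-2)$-components of each finite conic fiber live. For a finite $A_m$-fiber $G_\theta$ with $A(\theta) \neq 0$, the $m-2$ internal $(-2)$-components arise from resolving an $A_{m-2}$-singularity of the Weierstrass surface at the single point $(x,T) = (\theta, -B(\theta)/(2A(\theta)))$, so all these components lie inside one elliptic fiber $F_{t_0}$ with $t_0 = -B(\theta)/(2A(\theta))$, forming a chain in its dual graph whose two ends are met by the two $(-1)$-components of $G_\theta$ (themselves sections of $\pi$). The analogous local analysis, following the cases of Proposition~\ref{determining_conic_fibers}, covers the remaining finite $A_m$ and $D_m$ cases.

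The second step describes the position of $G_\infty$. Its $(-2)$-components $\beta_0, \beta_1, \beta_2, \ldots, \beta_{n-2}$ form a tripod joined to $(O) = \beta_{n-1}$. Deleting $(O)$ leaves a connected subgraph for $n \geq 4$, which therefore sits inside one elliptic fiber $F_a$; for $n = 3$ the two components $\beta_0, \beta_1$ are disjoint and may lie in two distinct fibers $F_a$ and $F_{a'}$. In all cases the $(-2)$-components of $G_\infty$ prescribe a specific subgraph of the dual graph of $F_a$ (and $F_{a'}$), and must include the component of $F_a$ meeting $(O)$.

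The third step is the case analysis. For each possible Kodaira type of $F_a$ (and of $F_{a'}$ in the $D_3$ case) and each type of $G_\infty$, I would enumerate how the prescribed $G_\infty$-subgraph embeds into the dual graph of $F_a$ and then check whether the remaining components of $F_a$ can be partitioned into chain subgraphs extending to valid finite $A_m$ conic fibers. A component of $F_a$ failing to fit into any such partition contributes one to $m_{F_a} - 1 - \ell_{F_a}$. For fibers $F_v$ that share no component with $G_\infty$, a direct argument using the local description of the finite conic fibers shows $\ell_v = m_v - 1$, contributing $0$. The main obstacle is the case analysis itself: for every Kodaira type $I_m, II, III, IV, I_m^*, IV^*, III^*, II^*$, one must check which of the unoccupied components of $F_a$ can serve as the internal $(-2)$-components of further $A_m$ conic fibers, verifying along the way that the required $(-1)$-endpoints extend to genuine sections of $\pi$; the formula in part~(i) and the three exceptional configurations in part~(ii) emerge precisely as the cases in which a component of $F_a$ is left uncovered, giving $\Df(\cE) > 0$.
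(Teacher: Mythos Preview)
Your proposal starts from the correct formula (Lemma~\ref{lemma:defect}) and correctly isolates the computation to the elliptic fibers $F_a$ (and $F_{a'}$) that share components with $G_\infty$. But your method for computing $\ell_v$ is much harder than what is needed and contains a real gap. You propose to decide which components of $F_a$ lie in conic fibers by partitioning the leftover $(-2)$-components into chains and then \emph{verifying} that each chain extends to a genuine finite $A_m$ conic fiber with $(-1)$-endpoints. You do not explain how this verification would be carried out, and the framing is backwards: $\varphi$ is a fixed morphism, so the question is not whether a component \emph{could} sit in some hypothetical $A_m$-chain but whether it \emph{does} lie in a fiber of this particular $\varphi$. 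Your argument for fibers $F_v$ disjoint from $G_\infty$ has the same issue: the local description of finite conic fibers tells you into which elliptic fiber their $(-2)$-components fall, not conversely which components of a given $F_v$ are covered.

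The paper bypasses all of this with a single numerical criterion: a $(-2)$-curve $C \subset X$ is a fiber component of $\varphi$ if and only if $C \cdot G = 0$, equivalently $C \cdot G_\infty = 0$; otherwise $C$ is a (multi)section of $\varphi$. So for any elliptic fiber $F_v$, the components counted by $\ell_v$ are exactly those lying in $G_\infty$ together with those disjoint from $G_\infty$, while the components \emph{not} counted are precisely those adjacent in the dual graph of $F_v$ to a component of $G_\infty$. No finite conic fiber ever needs to be constructed. For $F_v$ sharing no component with $G_\infty$, only $\Theta_{v,0}$ meets $G_\infty$ (via $(O)$, with multiplicity $2$), so $\ell_v = m_v - 1$ instantly. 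For $n=3$ one has $G_\infty = \Theta_{a,0} + \Theta_{b,0} + 2(O)$, and the only Kodaira types in which $\Theta_{a,0}$ has two distinct neighbours in $F_a$ (each then a section of $\varphi$, forcing $\ell_a = m_a - 2$) are $IV$ and $I_{m \geq 3}$; for $n \geq 4$ one embeds the $(-2)$-part of the $D_n$ graph into the dual graph of $F_a$ and reads off which remaining components are adjacent to it, yielding Table~\ref{Table_configurations} directly.
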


\begin{proof}
    For each fiber $F_v$ of $\pi$, recall that we define $\ell_v$ as the number of components of $F_v$ which are also components of a fiber of $\varphi$ (see Definition \ref{definition: ell_v}). Then, by Proposition \ref{lemma:defect} and Definition \ref{definition:defect},
    \begin{equation}\label{equation: ell_v}
        \Df(\cE) = \sum_{v \in \PP^1}(m_v - 1 - \ell_v).
    \end{equation}

    Let $F_v$ be a fiber of $\pi$ which has no components in common with $G_{\infty}$. If $F_v$ is not reducible, then $m_v = 1$ and $\ell_v = 0$. Assume $F_v$ is reducible and let $\Theta_{v,0}$ be its component intersecting the zero-section $(O)$. We can calculate $\Theta_{0,v} \cdot G_{\infty} = 2$, so $\Theta_{v,0}$ is a 2-section of $\varphi$. On the other hand, the remaining $m_v-1$ components of $F_v$ do not intersect $G_{\infty}$, so they are fiber components of $\varphi$. Thus, $\ell_v = m_v - 1$ and $m_v - 1 - \ell_v = 0$. Therefore, in order to determine $\Df(\cE)$, we just need to determine $\ell_v$ for the fibers $F_v$ which have a component in common with $G_{\infty}$.
    
    i) Let $n = 3$ and $F_a$, $F_b$ be the fibers of $\pi$ which have components in common with $G_{\infty}$. Let $\Theta_{a,0}, \Theta_{b,0}$ be the components of $F_a, F_b$ intersecting $(O)$, respectively. We can write $G_{\infty} = \Theta_{a,0} + \Theta_{b,0} + 2(O)$.
    
    If $F_a$ is of type $IV$ or $I_m$, $m \geq 3$, then $\Theta_{a,0}$ intersects $\Theta_{a,1}$ and $\Theta_{a,m_{a{\text{-}1}}}$. Since these components intersect $G_{\infty}$, they are sections of $\varphi$. On the other hand, the remaining $m_a-3$ components $\Theta_{a,2}, \ldots, \Theta_{a,m_a{\text{-}2}}$ do not intersect $G_{\infty}$, so they must be components of a fiber of $\varphi$. Thus, $\ell_a = m_a - 2$. If $F_a$ is of one of the remaining Kodaira types, then $\Theta_{a,0}$ only intersects $\Theta_{a,1}$. The other components $\Theta_{a,2}, \ldots, \Theta_{a,m_a{\text{-}}1}$ do not intersect $G_{\infty}$, so they are fiber components in $\varphi$. Thus, $\ell_a = m_a - 1$. We determine $\ell_b$ by the same arguments. Substituting every $\ell_v$ in Equation \ref{equation: ell_v}, we obtain the result.\\

    ii) Let $n \geq 4$ and $F_a$ be the fiber of $\pi$ which has a component in common with $G_{\infty}$. For each $n$, the Kodaira type of $F_a$ is restricted by the intersection pattern on the $(-2)$-components of $G_{\infty}$. We prove the result through the following steps for every possible combination of types of $G_{\infty}$ and $F_a$.
    \begin{itemize}
        \item[1.] Determine which components of $F_a$ are also components of $G_{\infty}$.
        \item[2.] From the remaining components of $F_a$, we determine which ones intersect $G_{\infty}$. These are section (or multi-sectios) of $\varphi$.
        \item[3.] The remaining components do not intersect $G_{\infty}$. Therefore, they are fiber components of $\varphi$.
    \end{itemize}
    With this, we determine $\ell_a$, so we can calculate $\Df(\cE) = m_a - 1 - \ell_a$. We do not show these steps explicitly, only the final results in Table \ref{Table_configurations}. The first two columns show the types of $G_{\infty}$ and $F_a$. The third column shows the components of $F_a$ which are also fiber components of $\varphi$, and the fourth column shows the components which are sections or multi-sections of $\varphi$, following the notation in \cite[Theorem 5.12]{schutt-shioda}. Finally, the fifth column shows the value of $\Df(\cE)$ for each combination.
    \end{proof}

    \begin{table}[h]
        \centering
        \begin{tabular}{|c|c|c|c|c|}
            \hline
            $G_{\infty}$ & $F_a$ & fiber components of $\varphi$ & (multi)-sections of $\varphi$ & $\Df(\cE)$\\
            \hline
            $D_4$ & $I_4$ & $\Theta_0, \Theta_1, \Theta_3$ & $\Theta_2$ & $0$\\
            \hline
            $D_4$ & $I_{m\geq 5}$ & $\Theta_0, \Theta_1, \Theta_3, \ldots, \Theta_{m{\text{-}}3}, \Theta_{m{\text{-}}1}$ & $\Theta_2, \Theta_{m{\text{-}}2}$ & $1$ \\
            \hline
            $D_5$ & $I_1^*$ & $\Theta_0, \Theta_1, \Theta_4, \Theta_5$ & $\Theta_2, \Theta_3$ & $1$\\
            \hline
            $D_5$ & $I_{m \geq 2}^*$ & $\Theta_0, \ldots, \Theta_5, \Theta_7, \ldots, \Theta_{m+4}$ & $\Theta_6$ & $0$\\
            \hline
            $D_6$ & $IV^*$ & $\Theta_0, \Theta_2, \Theta_3, \Theta_4, \Theta_6$ & $\Theta_1, \Theta_5$ & $1$\\
            \hline
            $D_7$ & $III^*$ & $\Theta_0, \ldots, \Theta_4, \Theta_6,\Theta_7$ & $\Theta_5$ & $0$\\
            \hline
            $D_9$ & $II^*$ & $\Theta_0, \Theta_2, \ldots, \Theta_8$ & $\Theta_1$ & $0$\\
            \hline
            $D_{m+5}$ & $I_{m \geq 0}^*$ & $\Theta_0, \Theta_2, \ldots, \Theta_{m+4}$ & $\Theta_1$ & $0$\\
            \hline
        \end{tabular}
        \caption{$\Df(\cE)$ for each configuration of $G_{\infty}$ and $F_a$}
        \label{Table_configurations}
    \end{table}

In particular, Theorem \ref{theorem: Determine Df} shows that $\Df(\cE) \leq 2$ for every curve $\cE$ given by Equation \ref{equation1}.

\subsection{Families of curves with $\Df(\cE) = 0$}\label{section: Df = 0}

If $\cE$ is a curve given by Equation \ref{equation1}, then by Theorem \ref{theorem_rank_k}, we know $\delta_k \geq r_k \geq \delta_k - \Df(\cE)$. In general, this is not enough to determine $r_k$ explicitly. The obvious exceptions are the examples in which $\Df(\cE) = 0$. In this section, we use Theorem \ref{theorem: Determine Df} to find the families of curves for which every member has $\Df(\cE) = 0$, and thus $r_k = \delta_k$.

We start by looking at the family of curves given by Equation \ref{equation1} in which $a_3(T)$ is non-constant.

\begin{thm}\label{theorem: generic is 2I2}
    Let $\cE$ be a curve given by Equation \ref{equation1}, and $\gamma(T) \colonequals \Deltaell(T)/a_3(T)^2$. Assume that $\deg(a_3) \geq 1$, $\deg(\gamma) = 8$ and that the resultant  $\Res(a_3,\gamma)$ is nonzero. Then, $r_k = \delta_k$.
\end{thm}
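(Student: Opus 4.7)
The plan is to reduce to the vanishing of the defect $\Df(\cE)$ via Theorem \ref{theorem_rank_k}, and then compute the defect by a case analysis of Theorem \ref{theorem: Determine Df}: one must identify the Kodaira type of $G_\infty$ and of the fiber(s) of $\pi$ sharing a component with $G_\infty$, and check that none falls in the list producing positive defect.

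First I would analyze the singular fibers of $\pi$ that lie over the zeros of $a_3$ and (possibly) over $T=\infty$. The Kodaira--Néron model admits the Weierstrass form $Y^2=X^3+a_2X^2+a_1a_3X+a_0a_3^2$, whose discriminant is $a_3^2\gamma$. At a root $c$ of $a_3$ one computes $\gamma(c)=a_2(c)^2(a_1(c)^2-4a_0(c)a_2(c))$, so $\Res(a_3,\gamma)\neq 0$ forces $a_2(c)\neq 0$, and Tate's algorithm applied to the special fiber $Y^2=X^2(X+a_2(c))$ produces a fiber of type $I_{2v_c(a_3)}$: that is, $I_2$ at each simple root and $I_4$ at a double root. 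When $\deg a_3=1$, the hypothesis $\deg\gamma=8$ combined with $\deg a_3=1$ collapses the degree-$8$ term of $\gamma$ to $a_{2,2}^2(a_{1,2}^2-4a_{0,2}a_{2,2})$, forcing $a_{2,2}\neq 0$; a standard change of coordinates $T=1/u$, $X=X'/u^2$, $Y=Y'/u^3$ then yields a local nodal model with $v_\infty(\Delta_W)=2$, so $\pi$ has an $I_2$ fiber at $T=\infty$ as well. Next I would determine $G_\infty$ via Proposition \ref{fiber_at_infinity}, which reduces to computing $\deg\Deltaconic$. The leading degree-six coefficient of $\Deltaconic=B(x)^2-4A(x)C(x)$ equals $a_{3,1}^2-4a_{3,0}a_{3,2}=\disc(a_3)$, so if $a_3$ has distinct roots then $\deg\Deltaconic=6$ and $G_\infty=D_3$. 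If instead $\deg a_3=2$ and $a_3=\alpha(T-c)^2$, a short computation shows the degree-five coefficient of $\Deltaconic$ is $-4\alpha a_2(c)$, which is nonzero by the hypotheses, so $\deg\Deltaconic=5$ and $G_\infty=D_4$.

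Assembling the three cases and applying Theorem \ref{theorem: Determine Df}: when $\deg a_3=2$ with distinct roots, $G_\infty=D_3$ and the two shared fibers are the $I_2$'s at the roots of $a_3$, so part (i) gives $\Df(\cE)=0$; when $\deg a_3=2$ with a double root, $G_\infty=D_4$ and the shared fiber is $I_4$, so the $(D_4,I_4)$ entry of Table \ref{Table_configurations} gives $\Df(\cE)=0$; when $\deg a_3=1$, $G_\infty=D_3$ and the two shared fibers are the $I_2$'s at the finite root of $a_3$ and at $T=\infty$, so part (i) again gives $\Df(\cE)=0$. The main technical point I expect to require care is the identification of precisely which fibers of $\pi$ share a component with $G_\infty$: these are the fibers whose projectivized Weierstrass equation degenerates as ``line at infinity $(Z=0)$ plus conic'', so that the line factor is a $(-2)$-component of the resolved fiber lying inside the locus $\{x=\infty\}=G_\infty$; this is exactly what happens at each zero of $a_3$ and, when $\deg a_3=1$, also at $T=\infty$. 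With $\Df(\cE)=0$ in every case, Theorem \ref{theorem_rank_k} yields $r_k=\delta_k$.
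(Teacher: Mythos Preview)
Your proposal is correct and follows essentially the same route as the paper: a case split on $\deg a_3$ and on whether $a_3$ has distinct or repeated roots, determination of the type of $G_\infty$ via $\deg\Deltaconic$, identification of the relevant fibers of $\pi$ (at the zeros of $a_3$, and at $T=\infty$ when $\deg a_3=1$) as $I_2$ or $I_4$ using $\Res(a_3,\gamma)\neq 0$ and $\deg\gamma=8$, and then Theorem \ref{theorem: Determine Df} to conclude $\Df(\cE)=0$. The only cosmetic difference is that in the double-root case you pin down $G_\infty=D_4$ by directly computing the $x^5$-coefficient of $\Deltaconic$ as $-4\alpha\,a_2(c)\neq 0$, whereas the paper first shows $G_\infty$ is $D_n$ with $n\geq 4$ and then reads off $n=4$ from the compatibility with $F_a=I_4$ in Table \ref{Table_configurations}.
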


\begin{proof}
    Firstly, we put Equation \ref{equation1} in Weierstrass form by applying the coordinate changes $x \mapsto x/a_3(T)$ and $y \mapsto y/a_3(T)$. After clearing the denominator, we have
    \begin{equation*}
        y^2 = x^3 + a_2(T)x^2 + a_1(T)a_3(T)x + a_0(T)a_3(T)^2.
    \end{equation*}
    The discriminant of $\cE$ is given by
    \begin{align*}
        \Deltaell(T) &= -27 a_0^2a_3^4 + 18a_0a_1a_2a_3^3 + a_1^2a_2^2a_3^2 - 4a_0a_2^3a_3^2 - 4 a_1^3a_3^3\\
        &= a_3^2(-27a_0^2a_3^2 + 18 a_0a_1a_2a_3 + a_1^2a_2^2 - 4a_0a_2^3 - 4a_1^3a_3)\\
        &= a_3^2(T) \gamma(T).
    \end{align*}
    Let $G_{\infty}$ be the fiber at infinity of the conic bundle $\varphi \colon X \to \PP^1$. Recall that by Proposition \ref{fiber_at_infinity}, $G_{\infty}$ is of type $D_n$ where $n = 9 - \deg(\Deltaconic)$.

    Assume $a_3(T) = p(T-q)$, where $p \in k^{\times}$ and $q \in k$. Then, writing $\cE$ in the form of Equation \ref{equation2}, we have $\deg(A) \leq 2$, $\deg(B) = 3$. Thus, $\deg(\Deltaconic) = 6$ and $G_{\infty}$ is of type $D_3$. There are two distinct fibers of $\pi \colon X \to \PP^1$ which have components in common with $G_{\infty}$, namely, the fiber $F_q$ at $T = q$ and the fiber at infinity $F_{\infty}$. We can use Tate's Algorithm to determine the fiber types of $F_q, F_{\infty}$. Since $\Res(a_3,\gamma) = 0$, $q$ is not a root of $\gamma(T)$, so $F_q$ is of type $I_2$. Similarly, since $\deg(\gamma) = 8$, $F_{\infty}$ is of type $I_2$. By Theorem \ref{theorem: Determine Df}, $\Df(\cE) = 0$.

    Assume $a_3(T) = p(T-q_1)(T-q_2)$, where $p \in k^{\times}$ and $q_i \in \overline{k}$. Calculating $\Deltaconic$, we obtain the lead coefficient in $x_6$ equals $p^2(q_1-q_2)^2$. If $q_1 = q_2$, then $\deg(\Deltaconic) = 6$ and $G_{\infty}$ is of type $D_3$. The fibers $F_{q_1}$ and $F_{q_2}$ have components in common with $G_{\infty}$, and by Tate's Algorithm both are of type $I_2$. If $q_1 = q_2$, then $\deg(\Deltaconic) \leq 5$ and $G_{\infty}$ is of type $D_n$ for some $n \geq 4$. By Tate's Algorithm, the fiber $F_{q_1}$ is of type $I_4$. By Table \ref{Table_configurations}, $G_{\infty}$ is of type $D_4$. In both cases, by Theorem \ref{theorem: Determine Df}, $\Df(\cE) = 0$, so by Theorem \ref{theorem_rank_k} $r_k = \delta_k$.
\end{proof}

Notice that the conditions imposed in Theorem \ref{theorem: generic is 2I2} on the coefficients of $a_i(T)$ exclude only a Zariski closed set. In this sense, this Theorem implies that in the family of curves given by Equation \ref{equation1}, a general member $\cE$ has $r_k = \delta_k$.

\begin{example}\label{example1}
    Let $\cE$ be given by
    \begin{align}\label{equation: example}
        y^2 &= Tx^3 + (T^2+aT+b)x^2 + (cT^2 + dT + e)x + (fT^2+gT+h)\\
        \nonumber &= (x^2+c x +f)T^2 + (x^3 + ax^2 + dx + g)T + (bx^2 + ex + h),
    \end{align}
    for $a,b,c,d,e,f,g,h \in k$. The condition $\Res(a_3,\gamma) \neq 0$ is equivalent to ${\gamma(0) = b^2(e^2 - 4bh) \neq 0}$. The condition $\deg(\gamma) = 8$ is equivalent to the coefficient of $\gamma$ in $T^8$ being nonzero, that is, $c^2-4f \neq 0$. Under these assumptions, $r_k = \delta_k$.
\end{example}

In what follows, we apply Theorem \ref{theorem: Determine Df} to recover previous results stated in Section \ref{section:survey}.

\begin{prop}\label{recover_ALRM}
    Let $\cE$ be a curve given by the following equation, for $A,B,C,D,a,b,c \in k$
    \begin{align}\label{alrm_equation_2}
        \cE \colon  \ y^2 &= x^3T^2 + 2g(x)T -h(x), \text{ where}\\
        \nonumber g(x) &= x^3 + ax^2 + bx + c, \, c \neq 0;\\
        \nonumber h(x) &= (A-1)x^3 + Bx^2 + Cx + D.
    \end{align}
    Assume $\Deltaconic(x)$ has 6 distinct nonzero roots which are perfect squares over $k$. Then, $r_k = 6$.
\end{prop}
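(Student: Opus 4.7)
The plan is to invoke Theorem~\ref{theorem_rank_k}, reducing the problem to showing $\delta_k=6$ and $\Df(\cE)=0$. In the form of Equation~\ref{equation2} the curve has $A(x)=x^3$, $B(x)=2g(x)$, and $C(x)=-h(x)$, so $\Deltaconic(x)=4(g(x)^2+x^3 h(x))$, and its six distinct nonzero roots are exactly those hypothesized. For any such root $\theta=\mu^2$ with $\mu\in k$, the orbit $[\theta]$ under $\Galk$ is a singleton and $A(\theta)=\theta^3=\mu^6=(\mu^3)^2$ is a nonzero square in $k$. Definition~\ref{definition: delta_k} then gives $\delta_k=6$.

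To compute $\Df(\cE)$ I would use Theorem~\ref{theorem: Determine Df}. Because $\deg\Deltaconic=6$, Proposition~\ref{fiber_at_infinity} shows $G_\infty$ is of type $D_3$. Since $A$ is the leading coefficient of $D_T(x)$ and $\Deltaconic$ has six roots, $A\neq 0$, so $a_3(T)=T^2+2T-(A-1)$ has two distinct roots $t_{0,1}=-1\pm\sqrt{A}$. By the local analysis used in the proof of Theorem~\ref{theorem: generic is 2I2}, the fibers $F_{t_0}$ and $F_{t_1}$ of $\pi$ are precisely those sharing components with $G_\infty$, and part~(i) of Theorem~\ref{theorem: Determine Df} then says $\Df(\cE)$ counts those among them of Kodaira type $IV$ or $I_m$ with $m\geq 3$.

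I would then run Tate's algorithm at each $t_i$ on the Weierstrass model $y^2=x^3+a_2x^2+a_1a_3x+a_0a_3^2$. Since $a_3(t_i)=0$ one has $c_4(t_i)=16a_2(t_i)^2$ and $v_{t_i}(\Deltaell)=2+v_{t_i}(\gamma)$ with $\gamma(t_i)=a_2(t_i)^2\bigl(a_1(t_i)^2-4a_0(t_i)a_2(t_i)\bigr)$; when $\gamma(t_i)\neq 0$ the valuations $v_{t_i}(\Deltaell)=2$ and $v_{t_i}(c_4)=0$ force Kodaira type $I_2$, contributing $0$ to $\Df(\cE)$. The hardest step is thus to prove $\Res(a_3,\gamma)\neq 0$. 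I expect this to be the main obstacle: one needs to translate vanishing of $\gamma(t_i)$ into a geometric statement about $F_{t_i}$ as a $2$-section of $\varphi$ acquiring extra sectional components meeting $G_\infty$, and to deduce that such a degeneration forces a collision among the roots of $\Deltaconic$, contradicting their distinctness. Once $\Df(\cE)=0$ is established, Theorem~\ref{theorem_rank_k} immediately gives $r_k=\delta_k=6$.
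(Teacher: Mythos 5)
Your route is the same as the paper's: show $\delta_k=6$, identify $G_\infty$ as a $D_3$ fiber via Proposition~\ref{fiber_at_infinity}, deduce $\Df(\cE)=0$ from Theorem~\ref{theorem: Determine Df} by analysing the two fibers of $\pi$ over the roots $-1\pm\sqrt{A}$ of $a_3(T)=T^2+2T-A+1$, and conclude with Theorem~\ref{theorem_rank_k}. The difference is that the paper disposes of the middle step in one line (``by Tate's Algorithm, the Kodaira--N\'eron model has two reducible fibers of type $I_2$''), while you correctly isolate what that assertion requires -- namely $\gamma(t_i)\neq 0$, i.e.\ $\Res(a_3,\gamma)\neq 0$ -- and then leave it unproved, explicitly calling it the main obstacle. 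So your proposal is incomplete at exactly the point where all the content of $\Df(\cE)=0$ lies; everything else ($\delta_k=6$, the $D_3$ fiber, the identification of $F_{t_0},F_{t_1}$, the Tate data $c_4(t_i)=16a_2(t_i)^2$ and $\gamma(t_i)=a_2(t_i)^2(a_1(t_i)^2-4a_0(t_i)a_2(t_i))$) is fine and matches the paper.

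Moreover, the way you propose to close the gap cannot work: vanishing of $\gamma(t_i)$ does not force a collision among the roots of $\Deltaconic$. Take $a=0$, $b=1$, $c=1$, $A=1$, $B=1$, $C=2$, $D=1$, i.e.\ $g(x)=x^3+x+1$ and $h(x)=(x+1)^2$. Then $\Deltaconic(x)/4=g(x)^2+x^3h(x)=x^6+x^5+4x^4+3x^3+x^2+2x+1$ is separable with nonzero roots (its gcd with its derivative is $1$, e.g.\ modulo $5$), so $\delta=6$ and every finite singular fiber of $\varphi$ is of type $A_2$; yet $a_3(T)=T(T+2)$, and at $T=0$ one has $a_2=-1$, $a_1=-2$, $a_0=-1$, so $a_1^2-4a_0a_2=0$: the conic fiber splits into two lines, and Tate's algorithm gives $v_0(\gamma)=1$, $v_0(\Deltaell)=3$, $c_4(0)\neq 0$, hence a fiber of type $I_3$ and $\Df(\cE)=1$. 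So the distinctness and nonvanishing of the roots of $\Deltaconic$ (even together with the square condition, which can always be arranged by enlarging $k$ and has no geometric content) do not imply $\Res(a_3,\gamma)\neq 0$; the hoped-for ``degeneration forces a root collision'' step is false, and in fact this also shows that the paper's own one-line appeal to Tate's algorithm is hiding a genuine extra condition. To complete an argument along these lines you must assume, or verify separately for the curves under consideration (as in the genericity implicit in Theorem~\ref{ALRM}), that neither fiber of $\pi$ over a root of $a_3$ is of type $IV$ or $I_m$ with $m\geq 3$ -- it is not a consequence of the stated hypotheses.
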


\begin{proof}
    Firstly, notice that writing $\cE$ in the form of Equation \ref{equation2}, we have $A(x) = x^3$. Since by assumption every root of $\Deltaconic$ is a perfect square, we have that $\delta_k = 6$.
    By Proposition \ref{fiber_at_infinity}, the fiber $G_{\infty}$ has type $D_3$, and by Tate's Algorithm, the Kodaira--Néron model $\pi \colon X \to \PP^1$ has two reducible fibers of type $I_2$. Therefore, Theorem \ref{theorem: Determine Df} tells us that $\Df(\cE) = 0$. Applying Theorem \ref{theorem_rank_k}, we obtain the result.
\end{proof}

This result recovers the calculation of the rank in Theorem \ref{ALRM}. We turn next to curves in which $a_3(T)$ is constant in Equation \ref{equation1} and $A(x) = 0$ in Equation \ref{equation2}.

\begin{prop}\label{prop: A = 0}
    Let $\cE$ be a curve given by
    \begin{equation}\label{equation: A = 0}
        \cE \colon \ y^2 = B(x) T + C(x),
    \end{equation}
    where $\deg(B) \leq 2$ and $\deg(C) = 3$. Then, $r_k = \delta_k$.
\end{prop}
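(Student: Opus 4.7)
The plan is to combine Theorem \ref{theorem_rank_k} with Theorem \ref{theorem: Determine Df}: since the former gives $\delta_k \geq r_k \geq \delta_k - \Df(\cE)$, it suffices to show that $\Df(\cE) = 0$ for every curve $\cE$ of the form \ref{equation: A = 0}, and then the sandwich collapses to $r_k = \delta_k$.

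First, I would read off the fiber $G_\infty$ of the conic bundle $\varphi$: since $A(x) = 0$ gives $\Deltaconic(x) = B(x)^2$, we have $\deg \Deltaconic = 2 \deg B \in \{0, 2, 4\}$, and Proposition \ref{fiber_at_infinity} identifies $G_\infty$ as type $D_n$ with $n = 9 - 2\deg B \in \{5, 7, 9\}$. Since $a_3(T) = c_3$ is a nonzero constant, no finite fiber of $\pi$ can share components with $G_\infty$; the unique fiber $F_a$ of $\pi$ appearing in Theorem \ref{theorem: Determine Df} is therefore $F_\infty$, the elliptic fiber at $T = \infty$.

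Next, I would pin down the Kodaira type of $F_\infty$ by Tate's algorithm. After scaling $x \mapsto x/c_3$, $y \mapsto y/c_3$ to reach Weierstrass form and applying $T = 1/u$, $x = X/u^2$, $y = Y/u^3$, the transformed coefficients $A_2, A_4, A_6$ have valuations at $u = 0$ controlled by $\deg B$. Concretely: if $\deg B = 2$, then $v_u(A_2) = 1$ forces $v_u(c_4) = 2$, while the bound $\deg_T \Deltaell \leq 4$ forces $v_u(\Delta) \geq 8$, so $F_\infty$ is of type $I_m^*$ with $m = v_u(\Delta) - 6 \geq 2$; if $\deg B = 1$, then $v_u(A_2) = 2$ and $v_u(A_4) = 3$ yield $v_u(c_4) = 3$ and $v_u(\Delta) = 9$, so $F_\infty$ is of type $III^*$; and if $\deg B = 0$, then $v_u(A_6) = 5$ gives $v_u(c_4) \geq 4$ and $v_u(\Delta) = 10$, so $F_\infty$ is of type $II^*$. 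Consulting Table \ref{Table_configurations}, each of the configurations $(D_5, I_{m\geq 2}^*)$, $(D_7, III^*)$, $(D_9, II^*)$ records $\Df(\cE) = 0$, and Theorem \ref{theorem_rank_k} then yields $r_k = \delta_k$.

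The principal technical worry is the case $\deg B = 2$, where one might fear degenerations such as $\disc B = 0$ or the vanishing of $b_0$ or $b_1$. The saving observation is that $v_u(c_4) = 2$ depends only on $b_2 \neq 0$, and any such additional vanishing can only \emph{increase} $v_u(\Delta)$, never drop it below the threshold $8$ forced by $\deg_T \Deltaell \leq 4$. Consequently, the "bad" row $(D_5, I_1^*)$ of Table \ref{Table_configurations}, which is the only configuration accessible from our data that would give $\Df(\cE) = 1$, is never reached.
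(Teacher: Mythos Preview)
Your proof is correct and follows the same strategy as the paper's. The only minor difference is that for $\deg B \leq 1$ the paper skips the Tate computation entirely, observing directly from Table~\ref{Table_configurations} that the rows for $D_7$ and $D_9$ already force $\Df(\cE)=0$ regardless of the Kodaira type of $F_a$; for $\deg B = 2$ your bound $\deg_T \Deltaell \leq 4$ (hence $v_u(\Delta)\geq 8$, hence $F_\infty$ of type $I^*_{m\geq 2}$) is precisely the paper's argument.
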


\begin{proof}
    Let $G_{\infty}$ be the fiber of $\varphi$ at infinity. Since $\Deltaconic(x) = B(x)^2$, by Proposition \ref{fiber_at_infinity}, $G_{\infty}$ is of type $D_n$ with $n = 9-2\deg(B)$.
    
    Assume $\deg(B) = 0$ or $\deg(B) = 1$. Then $G_{\infty}$ is of type $D_9$ or $D_7$, respectively. By Theorem \ref{theorem: Determine Df}, $\Df(\cE) = 0$ irrespective of the type of fiber of $\pi$ which has components in common with $G_{\infty}$ (see Table \ref{Table_configurations}).

    Assume $\deg(B) = 2$. Then, $G_{\infty}$ is of type $D_5$. By Theorem \ref{theorem: Determine Df}, $\Df(\cE) = 1$ if and only if $G_{\infty}$ has components in common with a fiber of $\pi$ of type $I_1^*$. Writing  $\cE$ in the form of Equation \ref{equation1}, we have that $a_3(T)$ is a nonzero constant, and $\deg(a_i) \leq 1$ for $i = 0,1,2$. Then, $\deg(\Deltaell) \leq 3$. By Tate's Algorithm, we deduce $\pi$ has a fiber of additive reduction with at least 7 components. Thus, $G_{\infty}$ has components in common with a fiber of type $I_m^*$ for $m \geq 2$, so $\Df(\cE) = 0$.

    Applying Theorem \ref{theorem_rank_k}, we deduce that $r_k = \delta_k$.
\end{proof}

This result generalizes Theorem \ref{rank A=0} to a general number field $k$.

\subsection{Families of curves with $\Df(\cE) > 0$} \label{section: Df > 0}

In this section we study families of curves $\cE$ given by Equation \ref{equation1} for which $\Df(\cE) > 0$. Then, Theorem \ref{theorem_rank_k} is not enough to determine the rank $r_k$. We explore cases in which we can use additional information to determine $r_k$.

\begin{prop}\label{prop: Df = 1}
    Let $\cE$ be a curve given by
    \begin{equation}\label{equation Df = 1}
        y^2 = \mu T^2 + B(x)T + C(x),
    \end{equation}
    where $\mu \in k^{\times}$, $\deg(B) \leq 2$, $\deg(C) = 3$. Then, $\Df(\cE) = 1$.
\end{prop}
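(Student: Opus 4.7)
The plan is to apply Theorem \ref{theorem: Determine Df}, which computes $\Df(\cE)$ from the Kodaira types of the fiber $G_\infty$ of $\varphi$ at infinity and of the fiber $F_a$ of $\pi$ sharing components with $G_\infty$. Since $A(x) = \mu \neq 0$ is constant, the leading behavior of $\Deltaconic = B^2 - 4\mu C$ depends only on $\deg B$: if $\deg B = 2$ then $\deg \Deltaconic = 4$, and by Proposition \ref{fiber_at_infinity} the fiber $G_\infty$ has type $D_5$; if $\deg B \leq 1$ then the term $-4\mu C$ dominates and $\deg \Deltaconic = 3$, so $G_\infty$ has type $D_6$.

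To identify $F_a$, I would first observe that the hypothesis $A(x) = \mu$ forces $a_3(T)$ to equal the nonzero leading coefficient $C_3$ of $C$, with $\deg a_0 = 2$ and $\deg a_1, \deg a_2 \leq 1$. A direct expansion gives $\deg_T \Deltaell = 5$ in the first case and $\deg_T \Deltaell = 4$ in the second, so $v_\infty(\Deltaell) = 7$ respectively $8$. Since this discriminant degree is too small for any finite fiber of $\pi$ to be of type $I_m^*$ or higher, $F_a$ must coincide with the fiber $F_\infty$ of $\pi$ at $T = \infty$. I would then apply Tate's algorithm via the substitution $T = 1/u$, $x = X/u^2$, $y = Y/u^3$, and compute the Tate invariants of the resulting local Weierstrass equation: when $\deg B = 2$ one obtains $v(c_4) = 2$, $v(c_6) = 3$, $v(\Delta) = 7$, identifying $F_\infty$ as type $I_1^*$; when $\deg B \leq 1$ one obtains $v(c_4) \geq 3$, $v(c_6) = 4$, $v(\Delta) = 8$, identifying $F_\infty$ as type $IV^*$.

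Both configurations $(G_\infty, F_a) = (D_5, I_1^*)$ and $(D_6, IV^*)$ appear in Table \ref{Table_configurations} with defect equal to $1$, so $\Df(\cE) = 1$ in either case. The main technical obstacle is the Tate-invariant calculation in the second case: the leading $u^4$-coefficient of $c_6$, contributed by $-216 b_6'$, equals $-864\mu C_3^2$, which is nonzero by the hypotheses $\mu \neq 0$ and $\deg C = 3$. This strict equality $v(c_6) = 4$ is precisely what distinguishes $IV^*$ from the higher Kodaira types sharing the discriminant valuation $8$, and is the only place where the assumptions on $\mu$ and on $\deg C$ enter essentially.
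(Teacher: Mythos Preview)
Your proposal is correct and follows essentially the same route as the paper: compute the type of $G_\infty$ from $\deg\Deltaconic$, identify $F_a$ with the fiber of $\pi$ at $T=\infty$, determine its Kodaira type via Tate's algorithm, and read off $\Df(\cE)=1$ from Table~\ref{Table_configurations}. The only difference is in emphasis: the paper first uses Table~\ref{Table_configurations} to restrict $F_a$ to $\{IV^*,I_1^*\}$ (for $D_6$) or $\{I_m^*\}$ (for $D_5$) and then distinguishes by the component count coming from $\deg\Deltaell$, whereas you go straight to the valuations $v(c_4),v(c_6),v(\Delta)$ of the local model at $u=0$; your computation that $v(c_6)=4$ (with leading coefficient $-864\mu C_3^2\neq 0$) in the $\deg B\le 1$ case is exactly what the paper's component-count argument needs implicitly to rule out $I_2^*$.
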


\begin{proof}
     By Proposition \ref{fiber_at_infinity}, the conic fiber $G_{\infty}$ at infinity is of type $D_n$ where ${n = 9 - \deg(\Deltaconic)}$. Let $F_a$ be the fiber of $\pi$ with components in common with $G_{\infty}$.
    
    Assume $\deg(B) \leq 1$. Then $G_{\infty}$ is of type $D_6$. By Table \ref{Table_configurations}, $F_a$ is of type $IV^*$ or $I_1^*$. Writing $\cE$ in the form of Equation \ref{equation1}, we have $\deg(a_0) = 2$, $\deg(a_1) \leq 1$ and $\deg(a_2) = \deg(a_3) = 0$. Therefore, $\deg(\Deltaell) = 4$, and by Tate's Algorithm, $\pi$ has fiber of additive reduction at infinity with $7$ components. Thus, $F_a$ is of type $IV^*$, and by Theorem \ref{theorem: Determine Df}, $\Df(\cE) = 1$.

    Now, assume $\deg(B) = 2$. Then $G_{\infty}$ is of type $D_5$, and $F_a$ is of type $I_m^*$ for some $m \geq 0$. Similarly, writing $\cE$ in the form of Equation \ref{equation1}, we have $\deg(a_0) = 2$, $\deg(a_1) \leq 1$, $\deg(a_2) = 1$ and $\deg(a_3) = 0$. Therefore $\deg(\Deltaell) = 5$ and by Tate's Algorithm there is a fiber of additive reduction at infinity with $6$ components. Thus $F_a$ is of type $I_1^*$ and by Theorem \ref{theorem: Determine Df}, $\Df(\cE) = 1$.
\end{proof}

In this situation, Theorem \ref{theorem_rank_k} is not enough to determine the rank $r_k$ of $\cE$.

\begin{prop}\label{prop: A = mu}
    Let $\cE$ be a curve given by Equation \ref{equation Df = 1}. Then,
    \begin{equation*}
        r_k = \begin{cases}
            \delta_k - 1 & \text{if $\mu \in k^2$,}\\
            \hfil \delta_k & \textit{if $\mu \in k \setminus k^2$.}
        \end{cases}
    \end{equation*}
\end{prop}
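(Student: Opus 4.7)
The plan is to determine $\rank \ker \psi_k$ in the exact sequence (\ref{exact_sequence_k}). Since $\Df(\cE) = 1$ by Proposition \ref{prop: Df = 1}, Theorem \ref{theorem_rank_k} forces $\rank \ker \psi_k \in \{0,1\}$ and $r_k = \delta_k - \rank \ker \psi_k$. Identifying $L_k$ with the sublattice of $L$ spanned by the Galois orbit sums $\Sigma_i = \sum_{P \in [P_{\theta_i}]} P$, we have $\ker \psi_k = L_k \cap \ker \psi$, so the task is to decide whether the generator of $\ker \psi$ lies in $L_k \otimes \QQ$.

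The first step is to produce an explicit generator of $\ker \psi$. Set $\lambda = \sqrt{\mu} \in \overline{k}$ and consider the rational function $f = y - \lambda T - \lambda B(x)/(2\mu)$ on $\cE_{\overline{k}(T)}$; together with $\bar f = y + \lambda T + \lambda B(x)/(2\mu)$, it satisfies $f \bar f = -\Deltaconic(x)/(4\mu)$. Using the standard identity $\operatorname{div}(x-\theta) = (P_\theta) + (-P_\theta) - 2(O)$ on $\cE$ and the sign symmetry swapping $f$ with $\bar f$ (equivalently $P_\theta$ with $-P_\theta$), I expect to obtain $\operatorname{div}(f) = \sum_\theta m_\theta (P_\theta) - N(O)$, where $m_\theta = v_{(x-\theta)}(\Deltaconic)$ and $N = \deg \Deltaconic$. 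Principality then gives the relation $v := \sum_\theta m_\theta P_\theta = O$ in $\cE(\overline{k}(T))$, and by Theorem \ref{theorem:kernel_equals_defect} (with $\Df(\cE) = 1$) this vector $v \in L$ generates $\ker \psi$ up to $\QQ$-scaling.

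The second step is the Galois analysis. Let $\chi : \Galk \to \{\pm 1\}$ be the character $\sigma \mapsto \sigma(\lambda)/\lambda$; then $\sigma(P_\theta) = \chi(\sigma) P_{\sigma(\theta)}$, and since the $m_\theta$ are Galois-invariant, a short computation gives $\sigma(v) = \chi(\sigma) v$. Next, solving the invariance equation $c_{\sigma(\theta)} = \chi(\sigma) c_\theta$ shows that $(L \otimes \QQ)^{\Galk}$ has a basis in bijection with orbits $[\theta_i]$ satisfying $\lambda \in k(\theta_i)$, and these basis vectors are exactly the $\Sigma_i$; hence $L_k \otimes \QQ = (L \otimes \QQ)^{\Galk}$. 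Combining these facts, $\ker \psi_k \otimes \QQ = (L_k \otimes \QQ) \cap (\ker \psi \otimes \QQ) = (\QQ v)^{\Galk}$, which equals $\QQ v$ when $\chi$ is trivial (i.e.\ $\mu \in k^2$) and $0$ otherwise. This yields $r_k = \delta_k - 1$ in the former case and $r_k = \delta_k$ in the latter.

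The main technical point will be isolating $\operatorname{div}(f)$ from the product identity: one must verify that $f$ vanishes at $(P_\theta)$ but not at $(-P_\theta)$, and compute the order of the pole of $f$ at $(O)$ according to whether $\deg B$ is $0$, $1$, or $2$. Each case is routine but must be handled carefully to justify the multiplicities $m_\theta$ appearing in $v$.
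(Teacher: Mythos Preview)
Your proof is correct and follows the same overall strategy as the paper: exhibit the unique (up to scalar) linear relation $v = \sum_\theta m_\theta P_\theta$ among the $P_\theta$, then use the Galois action to decide whether $v$ lies in $L_k \otimes \QQ$. The one substantive difference is that the paper obtains the relation $v$ by quoting \cite[Proposition~12]{battistoni2021ranks}, whereas you derive it from scratch as the divisor of $f = y - \lambda T - \tfrac{B(x)}{2\lambda}$ via the factorization $f\bar f = -\Deltaconic(x)/(4\mu)$; your route is self-contained and removes the dependence on that reference, at the cost of the pole-order case analysis you flag at the end (which indeed works out, since $\operatorname{ord}_O(f) = \max(3,2\deg B) = \deg\Deltaconic$). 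Your eigenspace formulation $(\ker\psi_k)\otimes\QQ = (\QQ v)^{\Galk}$, with $\sigma(v)=\chi(\sigma)v$, is also a cleaner packaging of the paper's more ad hoc argument that when $\mu\notin k^2$ any relation among the $\Sigma_i$ must be ``strictly different'' from $v$.
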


\begin{proof}
    For every $\theta \in \overline{k}$ such that $\Deltaconic(\theta) = 0$, let $P_{\theta}$ be the induced point in $\cE(\overline{k}(T))$ (see \ref{points from CB}). Recall from Definition \ref{def_SML} that $S$ is the set of points $P_{\theta}$. By \cite[Proposition 12]{battistoni2021ranks}, we know that
    \begin{equation}\label{equation: linear relation}
        \sum_{\theta: \Delta(\theta) = 0} [n_{\theta}]P_{\theta} = O \in \cE(\overline{k}(T)), 
    \end{equation}
    where $n_i = v_{(x-\theta)}(\Deltaconic)$. Notice that since $\Deltaconic(x) \in k[x]$, we have that $n_{\theta'} = n_{\theta}$ if $\theta' \in [\theta]$.
    By Proposition \ref{prop: Df = 1}, $\Df(\cE) = 1$. Therefore, Theorem \ref{theorem:kernel_equals_defect} implies that Equation \ref{equation: linear relation} is the only linear relation between points of $S$, up to scalar multiplication.

    Assume $\mu$ is a square in $k(\theta)$. Then, $\sqrt{\mu} \in k(\theta)$. For any $\sigma \in \Galk$, we have
    \begin{equation*}
        \sigma(P_{\theta}) = \begin{cases}
            \hfil\bigl(\sigma(\theta), \sqrt{\mu}(T+\tfrac{B(\sigma(\theta))}{2\mu})\bigr) = P_{\theta} & \text{if $\sigma \in \Gal(\overline{k}/k(\sqrt{\mu}))$,}\\
            \bigl(\sigma(\theta), - \sqrt{\mu}(T+\tfrac{B(\sigma(\theta))}{2\mu})\bigr) = - P_{\theta} & \text{if $\sigma \not\in \Gal(\overline{k}/k(\sqrt{\mu}))$.}
        \end{cases}
    \end{equation*}
    
    If $\mu \in k^2$, then $k(\sqrt{\mu}) = k$. Thus, for each $\Sigma_i \in S_k$ we can write $\Sigma_i = \sum_{\theta' \in [\theta]}P_{\theta'}$ for some $\theta \in \overline{k}$ such that $\Deltaconic(\theta) = 0$. Then, Equation \ref{equation: linear relation} determines a linear relation between points of $S_k$. By Theorem $\ref{theorem_rank_k}$, $r_k = \delta_k -1$.

    If $\mu \in k \setminus k^2$, then for each $\Sigma_i \in S_k$, $\Sigma_i$ is the sum of $P_{\theta'}$ for half of $\theta' \in [\theta]$, and $-P_{\theta}$ for the other half. Then, any linear relation between point of $S_k$ induces a linear relation between points of $S$ strictly different from Equation \ref{equation: linear relation}. Since $\Df(\cE) = 1$, this is not possible, so $r_k = \delta_k$. 
\end{proof}

Notice that this result generalizes Theorem \ref{rank deg(A) = 0} to any number field.

In what follows, we go back to curves given by Equation \ref{equation: A = 0}. We see that if we allow $B(x)$ to be a polynomial of degree 3, then the result of Proposition \ref{prop: A = 0} no longer holds in general.

\begin{prop}\label{prop: A=0, deg(B)=3}
    Let $\cE$ be a curve given by
     \begin{equation}\tag{\ref{equation: A = 0}}
        \cE \colon \ y^2 = B(x) T + C(x),
    \end{equation}
    with $B(x) = x^3 + ax^2 + bx + c$ is a separable polynomial of degree 3 and $C(x) =  \lambda B(x) + \mu$, for $\lambda \in k$, $\mu \in k^{\times}$. Then, $\Df(\cE) = 1$, and
    \begin{equation*}
        r_k = \begin{cases}
            \delta_k -1 & \textit{if $\mu \in k^2$,}\\
            \hfil \delta_k & \textit{if $\mu \in k \setminus k^2$.}
        \end{cases}
    \end{equation*}
\end{prop}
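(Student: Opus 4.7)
The plan is to compute $\Df(\cE)=1$, identify the primitive generator of $\ker\psi$, and analyze when that generator descends to $L_k$ according to whether $\sqrt{\mu}\in k$. First, $A(x)=0$ makes $\Deltaconic(x)=B(x)^2$ of degree $6$, so by Proposition \ref{fiber_at_infinity} the conic fiber $G_\infty$ has type $D_3$; Theorem \ref{theorem: Determine Df}(i) then reduces the problem to finding the Kodaira types of the two fibers of $\pi$ sharing components with $G_\infty$. Setting $u=T+\lambda$ and reducing $\cE$ to Weierstrass form gives
\begin{equation*}
y^2=x^3+aux^2+bu^2x+cu^3+\mu u^2,
\end{equation*}
whose discriminant factors as $u^4 Q(u)$ with $Q$ of degree $2$ and $Q(0)=-27\mu^2\neq 0$. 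Tate's algorithm at $u=0$ yields a fiber of type $IV$ (the auxiliary quadratic $T^2-\mu$ has discriminant $4\mu\neq 0$), and at infinity it yields type $I_0^*$ (the auxiliary cubic is exactly $B(T)$, separable by hypothesis). Only the $IV$ fiber counts in Theorem \ref{theorem: Determine Df}(i), so $\Df(\cE)=1$.

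Next, since $A(\theta)=0$ for each root $\theta$ of $B$, Equation \ref{points from CB} gives $P_\theta=(\theta,\sqrt{\mu})$. Substituting $y=\sqrt{\mu}$ into $y^2=B(x)(T+\lambda)+\mu$ yields $B(x)(T+\lambda)=0$, forcing $B(x)=0$ in $\overline{k}(T)$, so the rational function $y-\sqrt{\mu}$ on $\cE$ has simple zeros at $P_1,P_2,P_3$ and a pole of order $3$ at the zero-section. Hence
\begin{equation*}
P_1\oplus P_2\oplus P_3=O\quad\text{in}\ \cE(\overline{k}(T)),
\end{equation*}
and since $\Df(\cE)=1$, Theorem \ref{theorem:kernel_equals_defect} forces this relation to generate $\ker\psi$.

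Finally, for $\sigma\in\Galk$ one has $\sigma(P_\theta)=\pm P_{\sigma(\theta)}$ with sign matching the action of $\sigma$ on $\sqrt{\mu}$. If $\mu\in k^2$ the sign is always positive, every orbit of roots of $B$ contributes to $\delta_k$, and each $\Sigma_i$ equals $\sum_{\theta'\in[\theta_i]}P_{\theta'}$; regrouping the primitive relation yields $\Sigma_1\oplus\cdots\oplus\Sigma_{\delta_k}=O$, a nontrivial relation in $L_k$, whence $r_k\leq\delta_k-1$, which combined with Theorem \ref{theorem_rank_k} gives $r_k=\delta_k-1$. If $\mu\in k\setminus k^2$, set $K=k(\sqrt{\mu})$; then $\sqrt{\mu}\in k(\theta)$ is equivalent to $K\subset k(\theta)$, which by the tower law fails whenever $[k(\theta):k]\in\{1,3\}$. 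Since $\deg B=3$, at least one root $\theta^*$ lies in an orbit not contributing to $\delta_k$, and for it there exists $\sigma\in\Galk$ fixing $\theta^*$ but sending $\sqrt{\mu}\mapsto -\sqrt{\mu}$, so $-P_{\theta^*}\in[P_{\theta^*}]$ and $P_{\theta^*}$ appears with coefficient $0$ in every element of $L_k$. A nonzero relation in $L_k$ would pull back to a multiple of $(1,1,1)$ in $L$, but coefficient $0$ at $P_{\theta^*}$ forces that multiple to vanish; hence $\ker\psi_k=0$ and $r_k=\delta_k$. The main technical challenge is this final Galois-descent argument: keeping the twisted signs straight and exploiting the existence of a leftover root in a non-contributing orbit to block the lift of the unique linear relation.
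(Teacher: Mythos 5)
Your proposal is correct and follows essentially the same route as the paper: identify $G_{\infty}$ as type $D_3$, compute via Tate's algorithm that the two fibers of $\pi$ meeting it are of type $IV$ (at $T=-\lambda$) and $I_0^*$ (at infinity) so that Theorem \ref{theorem: Determine Df} gives $\Df(\cE)=1$, exhibit the relation $P_1\oplus P_2\oplus P_3=O$ from the line $y=\sqrt{\mu}$, and then decide descent of this unique relation according to the Galois action on $\sqrt{\mu}$. The only (valid) variation is in the case $\mu\in k\setminus k^2$, where you use the parity argument that some odd-degree orbit of roots of $B$ cannot contribute to $\delta_k$, so the full-support relation $(1,1,1)$ cannot arise from $L_k$, whereas the paper instead reuses the sign-splitting argument of Proposition \ref{prop: A = mu}; both correctly exploit $\rank\ker\psi=1$.
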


\begin{proof}
    Since $\deg(\Deltaconic) = 6$, the fiber $G_{\infty}$ of $\varphi$ at infinity is of type $D_3$. Writing $\cE$ in the form of Equation \ref{equation1}, we have
    \begin{equation*}
        y^2 = (T + \lambda)x^3 + a(T+\lambda)x^2 + b(T+\lambda)x + c(T+\lambda) + \mu.
    \end{equation*}
    The fibers of $\pi$ which have components in common with $G_{\infty}$ are the fiber $F_{-\lambda}$ at $T = - \lambda$ and $F_{\infty}$ at infinity. We can calculate that $v_{(T-\lambda)}(\Deltaell) = 4$, and since  $(T-\lambda)$ divides $a_2$, by Tate's Algorithm, $F_{-\lambda}$ is of type $IV$.
    
    On the other hand, by calculating the lead coefficient of $\Deltaell(T)$, we have that $\deg(\Deltaell) = 6$ if and only if $B(x)$ is separable. Then, by Tate's Algorithm, $F_{\infty}$ is of type $I_0^*$. Thus, by Theorem \ref{theorem: Determine Df}, $\Df(\cE) = 1$.

    Let $\theta_1, \theta_2, \theta_3$ denote the roots of $B(\theta)$. Then, the point corresponding to $\theta_i$ in $\cE(\overline{k}(T))$ is $P_{i} = (\theta_i, \sqrt{\mu})$. By this equation, we have $P_1 \oplus P_2 \oplus P_3 = O$. We prove the formula for $r_k$ by using the same arguments as in Proposition \ref{prop: A = mu}.
\end{proof}

Finally, we return to Example \ref{example1}. Modifying the equation, we provide an example of a family of curves with $\Df(\cE) = 2$.

\begin{example}\label{example2}
    Let $\cE$ be given by
    \begin{equation*}
        y^2 = Tx^3 + (T^2+aT+1)x^2 + (2bT^2 + cT + 2d)x + (b^2T^2+eT+d^2),
    \end{equation*}
    for $a,b,c,d,e,f \in k$, and let $\gamma(T) = \Deltaell(T)/T^2$. By our choice of coefficients, $\deg(\gamma(T)) \leq 7$, and $\gamma(0) = 0$.
    The fibers $F_0$ and $F_{\infty}$ are the fibers of $\pi$ which have components in common with $G_{\infty}$, and by Tate's algorithm, both are of type $I_m$ for some $m \geq 3$. By Theorem \ref{theorem: Determine Df}, $\Df(\cE) = 2$.
\end{example}

\bibliographystyle{amsalpha}
\bibliography{bibliography}

\end{document}